\documentclass[12pt]{amsart} 
\setcounter{tocdepth}{2}
\usepackage{amsfonts,graphics,amsmath,amsthm,amsfonts,amscd,amssymb,amsmath,latexsym,multicol,euscript}
\usepackage{enumerate}
\usepackage{epsfig,url}
\usepackage{flafter}
\usepackage[active]{srcltx}
\usepackage{hyperref}
\usepackage[small,nohug,UglyObsolete]{diagrams}
\diagramstyle[labelstyle=\scriptstyle]
\diagramstyle[noPostScript]

\makeatletter

\def\jobis#1{FF\fi
  \def\preedicate{#1}%
  \edef\preedicate{\expandafter\strip@prefix\meaning\preedicate}%
  \edef\job{\jobname}%
  \ifx\job\preedicate
}

\makeatother

\if\jobis{proposal}%
 \def\try{subsection}%
\else
  \def\try{section}%
\fi

 
 
 

\theoremstyle{plain}
\newtheorem{theorem}{Theorem}[\try]

\newtheorem{lemma}[theorem]{Lemma}

\newtheorem{example}[theorem]{Example}
\newtheorem{proposition}[theorem]{Proposition}
\newtheorem{definition-lemma}[theorem]{Definition-Lemma}
\newtheorem{definition-proposition}[theorem]{Definition-Proposition}
\newtheorem{question}[theorem]{Question}
\newtheorem{definition}[theorem]{Definition}
\newtheorem{remark}[theorem]{Remark}

\newtheorem{conjecture}[theorem]{Conjecture}


\def\scr#1{\mathbf{\EuScript{#1}}}


\def\lfomitlist#1.#2.#3.#4.{{#1}_0,{#1}_1 #2 \dots #2\hat{{#1}_{#4}} #2\dots #2 {#1}_{#3}}


\def\alist#1.#2.#3.{{#1}_1 #2 {#1}_2 #2\dots #2 {#1}_{#3}}
\def\zlist#1.#2.#3.{#1_0 #2 #1_1 #2\dots #2 #1_{#3}}
\def\ltomitlist#1.#2.#3.{{#1}_0,{#1}_1 #2 \dots #2\hat {{#1}_i} #2\dots #2 {#1}_{#3}}
\def\lomitlist#1.#2.#3.{{#1}_0 #2 {#1}_1 #2 \dots #2 \hat {{#1}_i} #2 \dots #2 {#1}_{#3}}
\def\lmap#1.#2.#3.{#1 \overset{#2}{\longrightarrow} #3}
\def\mes#1.#2.#3.{#1 \longrightarrow #2 \longrightarrow #3}
\def\ses#1.#2.#3.{0\longrightarrow #1 \longrightarrow #2 \longrightarrow #3 \longrightarrow 0}
\def\les#1.#2.#3.{0\longrightarrow #1 \longrightarrow #2 \longrightarrow #3}
\def\res#1.#2.#3.{#1 \longrightarrow #2 \longrightarrow #3\longrightarrow 0}
\def\Hi#1.#2.#3.{\text {Hilb}^{#1}_{#2}(#3)}
\def\ten#1.#2.#3.{#1\underset {#2}{\otimes} #3}

\def\mderiv#1.#2.#3.{\frac {d^{#3} #1}{d #2^{#3}}}
\def\mfderiv#1.#2.#3.{\frac {\partial^{#3} #1}{\partial #2}}
\def\ggr#1.#2.#3.{\mathbb{G}_{#1}(#2,#3)}


\def\llist#1.#2.{{#1}_1,{#1}_2,\dots,{#1}_{#2}}
\def\ulist#1.#2.{{#1}^1,{#1}^2,\dots,{#1}^{#2}}
\def\lomitlist#1.#2.{{#1}_1,{#1}_2,\dots,\hat {{#1}_i}, \dots, {#1}_{#2}}
\def\lomitlistz#1.#2.{{#1}_0,{#1}_1,\dots,\hat {{#1}_i}, \dots, {#1}_{#2}}
\def\loc#1.#2.{\Cal O_{#1,#2}}
\def\fderiv#1.#2.{\frac {\partial #1}{\partial #2}}
\def\deriv#1.#2.{\frac {d #1}{d #2}}
\def\map#1.#2.{#1 \longrightarrow #2}
\def\rmap#1.#2.{#1 \dasharrow #2}
\def\emb#1.#2.{#1 \hookrightarrow #2}
\def\non#1.#2.{\text {Spec }#1[\epsilon]/(\epsilon)^{#2}}
\def\Hi#1.#2.{\text {Hilb}^{#1}(#2)}
\def\sym#1.#2.{\operatorname {Sym}^{#1}(#2)}
\def\Hb#1.#2.{\text {Hilb}_{#1}(#2)}
\def\Hm#1.#2.{\Hom_{#1}(#2)}
\def\prd#1.#2.{{#1}_1\cdot {#1}_2\cdots {#1}_{#2}}
\def\Bl #1.#2.{\operatorname {Bl}_{#1}#2}
\def\pl #1.#2.{#1^{\otimes #2}}
\def\mgn#1.#2.{\overline {M}_{#1,#2}}
\def\ialist#1.#2.{{#1}_1 #2 {#1}_2 #2 {#1}_3 #2\dots}
\def\pair#1.#2.{\langle #1, #2\rangle}
\def\gproj#1.#2.{\mathbb{P}_{#1}(#2)}
\def\gpr #1.#2.{\mathbb{P}^{#1}_{#2}}
\def\gaf #1.#2.{\mathbb{A}^{#1}_{#2}}
\def\vandermonde#1.#2.{\left|
\begin{matrix}
1 & 1 & 1 & \dots & 1\\
{#1}_1 & {#1}_2 & {#1}_3 & \dots & {#1}_{#2}\\
{#1}_1^2 & {#1}_2^2 & {#1}_3^2 & \dots & {#1}_{#2}^2\\
\vdots & \vdots & \vdots & \ddots & \vdots\\
{#1}_1^{#2-1} & {#1}_2^{#2-1} & {#1}_2^{#2-1} & \dots & {#1}_{#2}^{#2-1}\\
\end{matrix}
\right|
}
\def\vandermondet#1.#2.{\left|
\begin{matrix}
1 & {#1}_1   & {#1}_1^2 & \dots & {#1}_1^{#2-1}\\
1 & {#1}_2   & {#1}_2^2 & \dots & {#1}_2^{#2-1}\\
1 & {#1}_3   & {#1}_3^2 & \dots & {#1}_3^{#2-1}\\
\vdots & \vdots & \vdots & \ddots & \vdots\\
1 & {#1}_{#2}& {#1}_{#2}^2 & \dots & {#1}_{#2}^{#2-1}\\
\end{matrix}
\right|
}
\def\gr#1.#2.{\mathbb{G}(#1,#2)}


\def\ideal#1.{I_{#1}}
\def\ring#1.{\mathcal {O}_{#1}}
\def\fring#1.{\hat{\mathcal {O}}_{#1}}
\def\proj#1.{\mathbb {P}(#1)}
\def\pr #1.{\mathbb {P}^{#1}}
\def\dpr #1.{\hat{\mathbb {P}}^{#1}}
\def\af #1.{\mathbb{A}^{#1}}
\def\Hz #1.{\mathbb{F}_{#1}}
\def\Hbz #1.{\overline{\mathbb {F}}_{#1}}
\def\fb#1.{\underset {#1} {\times}}
\def\rest#1.{\underset {\ \ring #1.} \to \otimes}
\def\au#1.{\operatorname {Aut}\,(#1)}
\def\deg#1.{\operatorname {deg } (#1)}
\def\pic#1.{\operatorname {Pic}\,(#1)}
\def\pico#1.{\operatorname{Pic}^0(#1)}
\def\picg#1.{\operatorname {Pic}^G(#1)}
\def\ner#1.{NS (#1)}
\def\rdown#1.{\llcorner#1\lrcorner}
\def\rfdown#1.{\lfloor{#1}\rfloor}
\def\rup#1.{\ulcorner{#1}\urcorner}
\def\rfup#1.{\lceil{#1}\rceil}
\def\bp#1.{#1^{{}\leq 1}}
\def\rcup#1.{\lceil{#1}\rceil}
\def\cone#1.{\operatorname {NE}(#1)}
\def\mone#1.{\operatorname {NM}(#1)}
\def\ccone#1.{\overline{\operatorname {NE}}(#1)}
\def\cmone#1.{\overline{\operatorname {NM}}(#1)}
\def\none#1.{\operatorname {NF}(#1)}
\def\cnone#1.{\overline{\operatorname {NF}}(#1)}
\def\coef#1.{\frac{(#1-1)}{#1}}
\def\vit#1.{D_{\langle #1 \rangle}}
\def\mm#1.{\overline {M}_{0,#1}}
\def\Hone#1.{H^1(#1,{\ring #1.})}
\def\ac#1.{\overline {\mathbb F}_{#1}}
\def\dir#1{\overset\rightarrow\to{#1}}

\def\adj#1.{\frac {#1-1}{#1}}
\def\spn#1.{\overline{#1}}
\def\pek#1.#2.{\Cal P^{#1}(#2)}
\def\plk#1.#2.{\Cal P^{\leq #1}(#2)}
\def\ev#1.{\operatorname{ev_{#1}}}
\def\ilist#1.{{#1}_1,{#1}_2,\ldots}
\def\bminv#1.{(\nu_1,s_1;\nu_2,s_2;\dots ;\nu_{#1},s_{#1};\nu_{r+1})}
\def\zinv#1.{(\nu_1,s_1;\nu_2,s_2;\dots ;\nu_{#1},s_{#1};0)}
\def\iinv#1.{(\nu_1,s_1;\nu_2,s_2;\dots ;\nu_{#1},s_{#1};\infty)}
\def\scr#1.{\mathbf{\EuScript{#1}}}
\def\mg#1.{\overline {M}_{#1}}
\def\inter#1.{\underset #1{\cdot}}
\def\cate#1.{\text{(\underline{#1})}}
\def\dir#1.{\overrightarrow{#1}}


\def\Hom{\operatorname{Hom}}

\def\dim{\operatorname{dim}}

\def\deg{\operatorname{deg}}

\def\Aut{\operatorname{Aut}}

\def\mult{\operatorname{mult}}

\def\rest{\operatorname{res}}

\def\vol{\operatorname{vol}}

\def\C`har{\operatorname{char}}

\def\dir{\operatorname{div}}


\def\C{\mathbb C}

\def\e{\Cal E}

\def\e1{E_1}
\def\e2{E_2}

\def\ds{\displaystyle}




\def\mapdown#1{\big\downarrow\rlap{$\vcenter
{\hbox{$\scriptstyle#1$}}$}}

\def\mapse#1{
{\vcenter{\hbox{$\mathop{\smash{\raise1pt\hbox{$\diagdown$}\!\lower7pt
\hbox{$\searrow$}}\vphantom{p}}\limits_{#1}\vphantom{\mapdown{}}$}}}}


\def\VR#1.{height#1pt&\omit&&\omit&&\omit&&\omit&&\omit&\cr}

\def\VRT#1.{height#1pt&\omit&&\omit&\cr}


\begin{document}
\title[Automorphisms of varieties]{On the birational automorphisms of varieties of general type} 
\author{Christopher D. Hacon}
\date{\today}
\address{Department of Mathematics \\
University of Utah\\
155 South 1400 East\\
JWB 233\\
Salt Lake City, UT 84112, USA}
\email{hacon@math.utah.edu}
\author{James M\textsuperscript{c}Kernan}
\address{Department of Mathematics\\
MIT\\
77 Massachusetts Avenue\\
Cambridge, MA 02139, USA}
\email{mckernan@math.mit.edu}
\author{Chenyang Xu}
\address{Beijing International Center of Mathematics Research\\ 5 Yiheyuan Road, Haidian District\\ 
Beijing 100871, China}
\email{cyxu@math.pku.edu.cn}
\address{Department of Mathematics \\University of Utah\\ 
155 South 1400 East \\ 
JWB 233\\
Salt Lake City, UT 84112, USA}
\email{cyxu@math.utah.edu}

\dedicatory{In memory of Eckart Viehweg}

\thanks{The first author was partially supported by NSF research grant no: 0757897, the
  second author was partially supported by NSF research grant no: 0701101, and the third
  author was partially supported by NSF research grant no: 0969495 and by a special
  research fund in China.  We would like to thank Xinyi Yuan for sparking our initial
  interest in this problem, J\'anos Koll\'ar for many useful conversations about this
  paper, and Igor Dolgachev and Mihai P{\u a}un for some helpful suggestions.  We would
  also like to thank the referee for some helpful comments}

\begin{abstract} We show that the number of birational automorphisms of a variety of
general type $X$ is bounded by $c \cdot \vol(X,K_X)$, where $c$ is a constant which only
depends on the dimension of $X$.
\end{abstract}

\maketitle

\tableofcontents

\section{Introduction}

Throughout this paper, unless otherwise mentioned, the ground field $k$ will be an
algebraically closed field of characteristic zero.  
\begin{theorem}\label{t_boundauto} If $n$ is a positive integer, then there is a constant
$c$ such that the birational automorphism group of any projective variety $X$ of general type of
dimension $n$ has at most $c\cdot\vol(X,K_X)$ elements.
\end{theorem}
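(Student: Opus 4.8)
The plan is to reduce to the automorphism group of the canonical model and then to compare canonical volumes across the quotient by this (finite) group; the quantitative heart of the matter is a uniform lower bound on the volume of the quotient orbifold.

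First I would replace $X$ by its canonical model $X_c := \Proj \bigoplus_{m\ge 0} H^0(X,mK_X)$, which exists by the minimal model program (Birkar--Cascini--Hacon--McKernan): $X_c$ has canonical, hence klt, singularities, $K_{X_c}$ is ample and $\Q$-Cartier, and $\vol(X_c,K_{X_c})=\vol(X,K_X)$. Because the canonical model is a birational invariant, every element of $\Bir(X)$ induces a birational self-map of $X_c$; such a map induces a graded automorphism of the canonical ring, hence is an honest automorphism of $X_c=\Proj(\,\cdot\,)$. Thus $\Bir(X)\cong\Aut(X_c)=:G$, and $G$ is finite (classical for varieties of general type; e.g. $\Aut$ of a canonically polarized variety is finite).

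Next I would form the quotient $\pi\colon X_c\to Z:=X_c/G$, a finite morphism of degree $|G|$ between normal projective varieties. Along a general point of each prime divisor $B_i$ in the branch locus, the inertia subgroup acts on the one-dimensional normal direction through a cyclic group of some order $e_i\ge 2$, so the ramification formula gives $\pi^{*}(K_Z+\Delta)=K_{X_c}$ with $\Delta:=\sum_i(1-\tfrac1{e_i})B_i$; in particular the coefficients of $\Delta$ lie in the set $\{1-\tfrac1m:m\in\Z_{>0}\}$, which satisfies the descending chain condition. Since $(X_c,0)$ is klt and $\pi$ is finite, $(Z,\Delta)$ is klt, and $K_Z+\Delta$ is ample because its pullback $K_{X_c}$ is. Taking $n$-th self-intersections and using $\vol(\pi^{*}D)=(\deg\pi)\,\vol(D)$ for big $\Q$-Cartier $D$, one gets
\[
\vol(X,K_X)=K_{X_c}^{\,n}=|G|\,(K_Z+\Delta)^{\,n}=|G|\,\vol(Z,K_Z+\Delta).
\]
So it suffices to bound $\vol(Z,K_Z+\Delta)$ below by a positive constant $v_n$ depending only on $n=\dim X$, and then the theorem holds with $c=1/v_n$.

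This last step is the crux and the expected main obstacle. The coefficients $1-\tfrac1{e_i}$ of $\Delta$ can be arbitrarily close to $1$ (for cyclic actions with large inertia along a divisor), so no crude discrepancy or multiplicity estimate suffices; what one needs is that the set of volumes $\vol(Z,K_Z+\Delta)$, taken over all $n$-dimensional klt pairs of log general type with the coefficients of $\Delta$ lying in $\{1-\tfrac1m:m\in\Z_{>0}\}$, itself satisfies the descending chain condition, and hence admits a positive minimum $v_n$. For $n=1$ this minimum is the classical value $\tfrac1{42}$ and the argument above specializes precisely to the Hurwitz bound $|\Aut(C)|\le 84(g-1)$; for general $n$ this DCC statement for volumes of log pairs — uniform over the a priori unbounded family of orbifold quotients that can occur — is the substantive input, and proving it is where the real work lies. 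Granted that, the remaining points are routine: birational invariance of the volume of the canonical class, the identity $\vol(\pi^{*}D)=(\deg\pi)\,\vol(D)$, and the descent of klt singularities and of ampleness along the finite morphism $\pi$.
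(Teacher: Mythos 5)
Your reduction is the one the paper itself uses: pass to the canonical model, identify $\Bir(X)$ with the finite group $G=\Aut(X_c)$, form the quotient with boundary $\Delta$ having standard coefficients $1-\tfrac1m$, and divide volumes, so that everything rests on a uniform positive lower bound for $\vol(Z,K_Z+\Delta)$ in terms of $n$ alone. The gap is in what you invoke for that bound. You appeal to the DCC (hence a positive minimum) for volumes of \emph{all} $n$-dimensional klt pairs of log general type with standard coefficients; that is a case of Koll\'ar's conjecture \eqref{c_general}, which this paper does not prove (at the time it was known only for surfaces, by Alexeev). What the paper proves is the strictly weaker statement \eqref{t_volume}, valid only for \emph{global quotients} in the sense of \eqref{d_global-quotient}, i.e.\ pairs of the form $(Y/G,\Delta)$ with $Y$ \emph{smooth} and $K_Y=\pi^*(K_{Y/G}+\Delta)$, and this restriction is not cosmetic: in the inductive step \eqref{t_tsuji} one must bound from below the volume of $(K_X+\Delta)$ restricted to a non kawamata log terminal centre $V$ through a very general point, and this is done precisely by lifting $V$ to the smooth cover $Y$, where it is dominated by a subvariety of general type whose quotient is again a global quotient, so the induction hypothesis applies. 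Treating the coefficient condition alone as the "substantive input" therefore makes your proof depend on a statement stronger than the paper's main theorem, and open in dimension $\ge 3$ when this paper was written.

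The repair is small and is exactly the paper's \S 7 argument: instead of quotienting the canonical model $X_c$ directly, replace $X_c$ by a $G$-equivariant resolution $Y$ and quotient that, so $(Y/G,\Delta')$ is a global quotient; since $X_c$ has canonical singularities, $\vol(Y,K_Y)=\vol(X_c,K_{X_c})=\vol(X,K_X)$, and $K_{Y/G}+\Delta'$ is big because its pullback $K_Y$ is, giving $\vol(X,K_X)=|G|\cdot\vol(Y/G,K_{Y/G}+\Delta')\ge |G|\,\delta$ by (2) of \eqref{t_volume}. Equivalently, your pair $(Z,\Delta)$ has the same volume as this honest global quotient, so \eqref{t_volume} suffices once this observation is made. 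The remaining points in your write-up are fine: $\Bir(X)\cong\Aut(X_c)$ via the action on the canonical ring, finiteness by Matsumura, the ramification formula with standard coefficients (cf.\ \cite[5.20]{KM98}), $\vol(\pi^*D)=\deg(\pi)\vol(D)$, and the fact that a DCC set of positive reals has a positive minimum.
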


For curves, this is a weak form of the classical Hurwitz Theorem which says that if $C$ is
a curve of genus $g\geq 2$ with automorphism group $G$, then $|G|\leq 84(g-1)$.  Note that
$\vol(C,K_C)=2g-2$ and so this bound may be rephrased as $|G|\leq 42\cdot \vol(C,K_C)$.

This problem has been extensively studied in higher dimensions, see for example,
\cite{Alexeev94}, \cite{Andreotti50}, \cite{Cai00}, \cite{Corti91}, \cite{HS91},
\cite{Xiao94}, and \cite{Xiao95}, for surfaces, \cite{CS95}, \cite{Szabo96},
\cite{Tsuji00}, \cite{Xiao96}, and \cite{Zhang07}, in higher dimensions, and
\cite{Ballico93}, for surfaces in characteristic $p$.

Xiao, \cite{Xiao95}, proved that if $S$ is a smooth projective surface of general type,
with automorphism group $G$, then $|G|\leq (42)^2\vol(S,K_S)$ (if $S$ is minimal, then
$\vol(S,K_S)=K^2_S$; for the general definition of the volume, see
\cite[2.2.31]{Lazarsfeld04b} or \eqref{d_volume}).  Xiao shows that we have equality if
and only if $S$ is a quotient of $C\times C$, where $C$ is a curve whose automorphism
group has cardinality $42(2g-2)$, by the action of a very special subgroup of the
automorphism group of $C\times C$.
\begin{question} Find an explicit bound for the constant $c$ appearing in
\eqref{t_boundauto}.  
\end{question}

If $C$ is a curve with automorphism group of maximal size, that is,
$|\Aut(C)|=84(g-1)$ and 
$$
X=C\times C\times\cdots\times C,
$$
then $\Aut(X)=n!(42)^n(2g-2)^n$ and $\vol(X,K_X)=n!(2g-2)^n$, so that $c\geq 42^n$.  If we consider
the example of the Fermat hypersurface
$$
X=(\zlist X^m.+.n+1.=0)\subset \pr n+1.,
$$
then $\Aut(X)\geq (n+2)!m^{n+1}$ and $\vol(X,K_X)=m(m-n-2)^n$.  If we take $m=n+3$ then the ratio 
$$
\frac{\Aut(X)}{\vol(X,K_X)}\ge(n+2)!(n+3)^{n},
$$
exceeds $42^n$ for $n$ sufficiently large (indeed, $n\geq 5$ suffices), so that $c$ is
eventually greater than $42^n$.  In fact, $c$ grows faster than $n^n$, so that $c$ grows
faster than any exponential function.

It is all too easy to give examples which show that \eqref{t_boundauto} fails spectacularly
in characteristic $p$.  Consider the finite field $\mathbb{F}_{q^2}$ with $q^2$ elements,
where $q=p^k$ is a power of a prime $p$.  Note that the function
$$
\map \mathbb{F}_{q^2}.\mathbb{F}_{q^2}. \qquad \text{given by} \qquad \map b.\bar b=b^q.,
$$
is an involution which plays the role of complex conjugation in characteristic $p$.
Suppose that $V=\mathbb{F}_{q^2}^m$ is the standard vector space of dimension $m$ over the
field $\mathbb{F}_{q^2}$.  Then there is a sesquilinear pairing 
$$
\map V\times V.\mathbb{F}_{q^2}. \qquad \text{given by} \qquad \map (a,b).\sum_i a_i\bar b_i..
$$
Let $U_m(q)$ denote the group of $m\times m$ unitary matrices over the field
$\mathbb{F}_{q^2}$, so that $U_m(q)$ is the group of linear maps of $V$ preserving the
pairing.  Recall that $U_m(q)$ is a finite simple group of Lie type, see for example
\cite{GLS94}, whose notation we follow.  Note that the Fermat hypersurface
$$
X=(\zlist X^{q+1}.+.n+1.=0)\subset \pr n+1.,
$$
is the projectivisation of the null cone of the pairing, so that $\Aut(X)\supset
U_{n+2}(q)$.  We have 
$$
|U_{n+2}(q)|=\frac 1{(n+2,q+1)}q^{\binom{n+2}2}\prod_{i=2}^{n+2} (q^i-(-1)^i),
$$
see, for example, the table on page 8 of \cite{GLS94}.  Note that both the order of the
automorphism group and the volume of the Fermat hypersurface are polynomials $f$ and $g$
in $q$.  $f$ has degree 
$$
\binom{n+2}2+\binom{n+3}2-1,
$$
and $g$ has degree 
$$
(n+1).
$$
If $n=1$, the genus is a quadratic polynomial in $q$ and the order of the automorphism
group is bounded by a polynomial of degree $4$ in $g$.  
\begin{question}\label{q_all} Fix a positive integer $n$.  Can we find positive integers
$c$ and $d$ such that if $X$ is any $n$-dimensional smooth projective variety of general type over an
algebraically closed field of arbitrary characteristic, then
$$
|\Aut(X)|\leq c\cdot \vol(X,K_X)^d?
$$
\end{question}

It is known that if $n=1$ then we may take $c=216$ and $d=4$ (cf. \cite{Stichtenoth73}).  

We now explain how to derive \eqref{t_boundauto} from a result about the quotient.  If $Y$
is a variety of general type, then the automorphism group $G=\Aut(Y)$ is known to be
finite, see \cite{Matsumura63}.  If $f\colon\map Y.X=Y/G.$ is the quotient map, then there
is a $\mathbb{Q}$-divisor $\Delta$ on $X$ such that $K_Y=f^*(K_X+\Delta)$.  We call any
such log pair $(X,\Delta)$ a \textit{global quotient}, cf. \eqref{d_global-quotient}.  As
$$
\vol(Y,K_Y)=|G|\cdot \vol(X,K_X+\Delta),
$$
the main issue is to bound $\vol(X,K_X+\Delta)$ from below:
\begin{theorem}\label{t_volume} Fix a positive integer $n$.  Let $\mathfrak{D}$ be the set 
of log pairs $(X,\Delta)$, which are global quotients, where $X$ is projective of
dimension $n$.

\begin{enumerate}
\item the set 
$$
\{\, \vol(X,K_X+\Delta) \,|\, (X,\Delta)\in \mathfrak{D} \,\},
$$
satisfies the DCC.  
\end{enumerate}
Further, there are two constants $\delta>0$ and $M$ such that  if $(X,\Delta)\in \mathfrak{D}$ and $K_X+\Delta$ is big, then 
\begin{enumerate}
\setcounter{enumi}{1}
\item $\vol(X,K_X+\Delta)\geq \delta$, and 
\item $\phi_{M(K_X+\Delta)}$ is birational.
\end{enumerate}
\end{theorem}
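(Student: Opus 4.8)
The plan is to reduce Theorem~\ref{t_volume} to three assertions about log canonical pairs whose boundary has coefficients in one fixed DCC set, and then to prove those assertions by the minimal model program together with the technique of creating non-klt centres and Nadel vanishing.

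\emph{Reduction.} Let $(X,\Delta)$ be a global quotient, coming from a finite quotient $f\colon Y\to X=Y/G$ with $K_Y=f^*(K_X+\Delta)$. Comparing ramification along each prime divisor shows that $\Delta=\sum_i(1-\tfrac1{r_i})D_i$ with $r_i\in\mathbb{Z}_{>0}$, so every coefficient of $\Delta$ lies in the well ordered (hence DCC) set
$$
I=\Bigl\{\,1-\tfrac1m\ \Big|\ m\in\mathbb{Z}_{>0}\,\Bigr\}\cup\{1\},
$$
and $(X,\Delta)$ is log canonical. When $K_X+\Delta$ is big we may replace $(X,\Delta)$ by its log canonical model; this leaves $\vol(X,K_X+\Delta)$ unchanged, keeps the coefficients in $I$, and makes $K_X+\Delta$ ample. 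So it suffices to show, for fixed $n$, that the set of volumes $\vol(X,K_X+\Delta)$ over log canonical pairs $(X,\Delta)$ of dimension $n$ with coefficients in $I$ satisfies the DCC, and that among such pairs with $K_X+\Delta$ big there are a uniform lower bound on the volume and a uniform $M$ making $\phi_{M(K_X+\Delta)}$ (that is, the map given by $|\lfloor M(K_X+\Delta)\rfloor|$) birational. The value $0$ coming from the non-big pairs is harmless for the DCC.

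\emph{Effective birationality (the main step).} I would prove by induction on $n$ that there is $M=M(n)$ making $\phi_{M(K_X+\Delta)}$ birational for every such pair with $K_X+\Delta$ big; the case $n=1$ is the classical estimate $\deg(K_C+\Delta)\ge\tfrac1{42}$. For the inductive step, take general points $x,y\in X$, so that $(X,\Delta)$ is klt near both; using Riemann--Roch and a lower bound on $\vol(X,K_X+\Delta)$ one produces a boundary $0\le D\sim_{\mathbb{Q}}\lambda(K_X+\Delta)$, with $\lambda=\lambda(n)$, so that $(X,\Delta+D)$ has a minimal non-klt centre $V$ through $x$ and, after tie-breaking, through no other non-klt centre nearby. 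Nadel vanishing lifts sections of $\lfloor M(K_X+\Delta)\rfloor$ off $V$, reducing the problem to separating points on $V$; by Kawamata subadjunction $(V^\nu,\Theta)$ is again log canonical of dimension $<n$, and --- a standard but delicate point --- the coefficients of $\Theta$ lie in a DCC set depending only on $n$, so the inductive hypothesis applies. The real difficulty is to keep $\lambda$, and hence $M$, independent of $(X,\Delta)$: this forces one to prove in tandem a uniform lower bound $\vol(X,K_X+\Delta)\ge\delta(n)$, obtained by restricting to a general member $S$ of $|\lfloor M'(K_X+\Delta)\rfloor|$ and using adjunction and inversion of adjunction to bound $\vol(X,K_X+\Delta)$ below by a fixed constant times a volume in dimension $n-1$ with DCC coefficients. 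Running the two inductions together gives (2) with $\delta=\delta(n)$ and (3) with $M=M(n)$.

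\emph{DCC for volumes.} Suppose finally that $v_1>v_2>\cdots$ is a strictly decreasing sequence of volumes $v_j=\vol(X_j,K_{X_j}+\Delta_j)$ with $K_{X_j}+\Delta_j$ big. By the previous step $\phi_{M(K_{X_j}+\Delta_j)}$ is birational, and after a general linear projection its image is a subvariety $W_j\subset\mathbb{P}^{2n+1}$ with $\deg W_j\le\vol\bigl(X_j,M(K_{X_j}+\Delta_j)\bigr)=M^n v_j\le M^n v_1$, so the $W_j$ lie in a bounded family. Spreading out the $\Delta_j$, we may assume all $(X_j,\Delta_j)$ lie over one of finitely many bounded families on which $\Supp\Delta_j$ is fixed, and on each such family $\vol(K_X+\Delta)$ is a function of the finitely many coefficients of $\Delta$ which is nondecreasing in each coefficient and lower semicontinuous along the family; DCC of the coefficients (valid in $I$) then forces DCC of these volumes, contradicting $v_1>v_2>\cdots$. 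This establishes (1). The principal obstacle throughout is the circularity in the main step --- a lower volume bound is needed to create a controlled non-klt centre, while effective birationality is what yields that bound --- which one breaks by carrying the two inductions on dimension simultaneously and verifying that the DCC property of the coefficient set survives adjunction to the minimal non-klt centre.
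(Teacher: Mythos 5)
Your reduction in the first step gives away exactly the structure that makes the paper's induction work. You replace ``global quotient'' by ``log canonical with coefficients in $I$,'' but the inductive engine needs a uniform lower bound for the volume of $(K_X+\Delta)|_V$ on a minimal non-klt centre $V$ through a very general point, and in the paper this comes from the global quotient structure: the preimage of $V$ in the smooth cover $Y$ passes through a very general point of a variety of general type, so (after an equivariant resolution) its quotient is again a \emph{global quotient} of dimension $<n$, and the inductive hypothesis (2) for global quotients gives $\vol(V,\lambda(K_X+\Delta)|_V)>(\epsilon/2)^k$, which is what feeds \eqref{t_recursive} in \eqref{t_tsuji}. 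In your setting the analogous input is a uniform volume bound for the pair produced by Kawamata subadjunction on $V^{\nu}$, and your claim that its coefficients lie in a DCC set depending only on $n$, so that ``the inductive hypothesis applies,'' is not a standard point: controlling adjunction coefficients for arbitrary lc pairs with DCC boundary is a substantial theorem in its own right (it is part of what would be needed for Koll\'ar's Conjecture \eqref{c_general}, which this paper explicitly does not prove), and even granting it, your induction hypothesis is stated for coefficients in $I$, not for the larger set produced by subadjunction. The circularity you acknowledge is broken in the paper precisely because the class of global quotients is stable under passing to non-klt centres through very general points; the class you reduced to is not, so your two simultaneous inductions do not close.

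The DCC step also bypasses the paper's main difficulty. Effective birationality only yields that the pairs are \emph{log birationally bounded}: $X_j$ is birational to a fibre $Z_t$ of a bounded family with the boundary mapping into a fixed divisor $B_t$, but $X_j$ may be an arbitrarily high model over $Z_t$, and $\vol(X_j,K_{X_j}+\Delta_j)$ is not a function of the coefficients of the pushforward $f_{j*}\Delta_j$ on $Z_t$: the negative part of $K_{X_j}+\Delta_j-f_j^*(K_{Z_t}+f_{j*}\Delta_j)$ can live on ever higher models, and the coefficient set generated on blow ups fails the DCC (see \eqref{e-ndcc}). So your assertion that on a bounded family with fixed support the volume is a nondecreasing, lower semicontinuous function of finitely many coefficients does not apply to the actual pairs $(X_j,\Delta_j)$, and the semicontinuity itself requires the simple normal crossings over the base hypothesis of \eqref{t_dilp}. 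Supplying this is exactly the content of the reduction to a single pair via deformation invariance of log plurigenera (\eqref{t_dilp}, \eqref{t_inv}, \eqref{p_mmp}) and of the $\mathbf{b}$-divisor weight induction (\eqref{l_reduction}, \eqref{p_limit}); without some replacement for these, the final step of your argument does not go through.
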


DCC is an abbreviation for the descending chain condition.  Note that, by convention,
$\phi_{M(K_X+\Delta)}=\phi_{\rfdown M(K_X+\Delta).}$ (cf. \eqref{ss-NC}).  Note also that the set of volumes
of smooth projective varieties of fixed dimension is a discrete set (cf. \cite{Tsuji07},
\cite{HM05b} and \cite{Takayama06}).  The situation for log pairs is considerably more
subtle.

\begin{remark} \cite{Tsuji07}, \cite{HM05b} and \cite{Takayama06} show that if we fix a
positive integer $n$, then the set 
$$
\{\, \vol(X,K_X) \,|\, \text{$X$ is a smooth projective variety of dimension $n$} \,\},
$$
is discrete.  However, the corresponding statement fails for kawamata log terminal
surfaces, whence also for surface with reduced boundary with simple normal crossings.  See
\cite{Kollar08} for an example.
\end{remark}

However we do have:
\begin{conjecture}[Koll\'ar, cf. \cite{Kollar92b}, \cite{Alexeev94}]\label{c_general} Fix $n\in \mathbb{N}$ and a set 
$I\subset [0,1]$ which satisfies the DCC.

If $\mathfrak{D}$ is the set of simple normal crossings pairs $(X,\Delta)$, where $X$ is
projective of dimension $n$, and the coefficients of $\Delta$ belong to $I$, then the set
$$
\{\, \vol(X,K_X+\Delta) \,|\, (X,\Delta)\in \mathfrak{D} \,\},
$$
satisfies the DCC.
\end{conjecture}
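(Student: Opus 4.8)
The plan is to prove the conjecture by a simultaneous induction on the dimension $n$, carrying along three assertions at once: (a) the DCC for the set of volumes $\vol(X,K_X+\Delta)$ over all $n$-dimensional pairs with coefficients in $I$; (b) the boundedness of the family of $n$-dimensional log canonical models with coefficients in $I$ and volume at most a fixed constant $v$; and (c) effective birationality, i.e. the existence of an integer $m=m(n,I,v)$ with $\phi_{\rfdown m(K_X+\Delta).}$ birational for every such pair of volume at most $v$. For $n=1$ all three are elementary. So I would assume (a), (b), (c) in all dimensions $<n$ and argue in dimension $n$. As a preliminary reduction, given an SNC pair $(X,\Delta)$ with $K_X+\Delta$ big, running a $(K_X+\Delta)$-MMP---which terminates since the log canonical class is big---reaches the log canonical model $(X^c,\Delta^c)$, on which $K_{X^c}+\Delta^c$ is ample, whose boundary still has coefficients in $I$, and whose volume is unchanged; so I may assume throughout that $K_X+\Delta$ is ample.

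The technical heart is (c). Given such a pair with $v=\vol(X,K_X+\Delta)$, I would run the Angehrn--Siu / Koll\'ar cutting-down argument to produce, at a general point $x\in X$, a $\mathbb{Q}$-divisor $D\sim_{\mathbb{Q}}\lambda(K_X+\Delta)$ with $x$ an isolated non-klt centre and $\lambda$ bounded in terms of $n$ and an upper bound for $v$---the point being that a volume and Riemann--Roch estimate makes enough sections of multiples of $K_X+\Delta$ available to perform the cuts. If the minimal non-klt centre $Z\ni x$ is positive-dimensional, Kawamata subadjunction endows $Z$ with a klt pair $(Z,\Delta_Z)$; here I would need the coefficients of $\Delta_Z$ to lie in a DCC set depending only on $I$ and $n$---this is where the ACC for log canonical thresholds in dimensions $<n$ enters---and $\vol(Z,K_Z+\Delta_Z)$ to be bounded away from $0$, which is the inductive hypothesis in lower dimension. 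Descending to $Z$, invoking (c) there, and lifting sections back to $X$ by Nadel vanishing then produces the uniform $m$.

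Granting (c), boundedness (b) follows in the standard way: $|\rfdown m(K_X+\Delta).|$ maps $X$ birationally into a projective space $\mathbb{P}^N$ with $N$ bounded in terms of $v$ and $m$, the image has bounded degree, and the closures of the push-forwards of $\Delta$ and of $K_X+\Delta$ have bounded degree, so Hilbert-scheme and Chow-variety arguments collect all such $(X,\Delta)$ into finitely many bounded families $(\mathcal{X},\mathcal{D})\to T$ with $T$ of finite type. Assertion (a) then follows: on each of the finitely many families one has $\Delta=\sum a_jD_j$ with $a_j\in I$ and the $D_j$ among finitely many divisors, so fibrewise $\vol(X,K_X+\Delta)$ is one of finitely many polynomials in the $a_j$ with coefficients fixed intersection numbers; since the set of values of finitely many such polynomials evaluated on tuples from the DCC set $I$ again satisfies the DCC, the induction closes.

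The hard part will be (c), and within it keeping all the auxiliary boundary coefficients inside a single DCC set throughout the cutting-down process---this is essentially equivalent to the ACC for log canonical thresholds with coefficients in a DCC set, itself a substantial theorem---together with the fact that the induction on $n$ is genuinely simultaneous, since the lower bound on the volume, the boundedness, and the bound on $m$ all depend on one another. A further layer is the case where $K_X+\Delta$ is big but the pair fails to be klt; there I would first perturb $\Delta$ to a klt boundary with a controlled change of volume and then proceed as above. Finally, it is worth stressing that Theorem~\ref{t_volume} of the present paper already settles the case in which $(X,\Delta)$ is a global quotient, where the coefficients of $\Delta$ are of the standard shape $1-1/m$; the general conjecture demands the same conclusion under the much weaker hypothesis that $I$ merely satisfies the DCC, which is precisely what forces the full inductive machinery above.
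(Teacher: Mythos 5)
First, a point of orientation: the statement you are proving is stated in the paper as a \emph{conjecture}. The paper does not prove \eqref{c_general}; it proves only the special case of global quotients with standard coefficients $\frac{r-1}{r}$ (Theorem \eqref{t_volume}), together with the partial result that DCC of volumes holds once one already has a \emph{log birationally bounded} family (Theorem \eqref{t_dcc}), and the authors explicitly say they only ``hope'' to settle \eqref{c_general} by these techniques. So there is no proof in the paper to match yours against; the question is whether your proposal is itself a proof, and it is not --- it is a roadmap whose technical core is missing, as you partly acknowledge yourself when you write that step (c) is ``essentially equivalent to the ACC for log canonical thresholds with coefficients in a DCC set, itself a substantial theorem.'' Quoting an unproved substantial theorem (plus the coefficient control in Kawamata subadjunction needed to keep $\Delta_Z$ in a DCC set, which is a further unproved ingredient) inside an induction that is supposed to establish the conjecture is a genuine gap, not a detail.

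The second concrete gap is the passage from boundedness (b) to the DCC (a). Your argument --- that in each bounded family $\vol(X,K_X+\Delta)$ is ``one of finitely many polynomials in the $a_j$ with coefficients fixed intersection numbers,'' so DCC of $I$ gives DCC of volumes --- fails on two counts. First, the volume of a big divisor is not a polynomial in intersection numbers unless the divisor is nef, and nefness of $K_X+\Delta$ is not preserved when you move the coefficients or move in the family; even constancy of the volume along the family is false in general and requires the simple-normal-crossings-over-the-base hypothesis of Theorem \eqref{t_dilp} (the paper cites a counterexample from \cite{FH11}). Second, and more seriously, what (c) buys you is only \emph{log birational} boundedness: the actual pairs $(X,\Delta)$ in $\mathfrak{D}$ live on arbitrarily high birational models over the bounded family, and comparing $\vol(X,K_X+\Delta)$ with the volume computed on the fixed model $Z$ is exactly where the difficulty sits, because the log pullback can acquire negative parts on divisors extracted on ever higher models, and the naive coefficient set one is led to does not satisfy the DCC (see Example \eqref{e-ndcc}, where it degenerates to $\mathbb{Q}\cap[0,1]$). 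Handling this is the content of Proposition \eqref{p_limit}, via limits of $\mathbf{b}$-divisors, the cut/reduction construction and the weight induction; nothing in your proposal addresses it. So the proposal correctly identifies the skeleton (simultaneous induction on effective birationality, boundedness, and DCC of volumes, with the paper's Theorem \eqref{t_volume} as the model case), but both load-bearing steps --- the DCC/ACC coefficient control inside the Angehrn--Siu cutting-down, and the descent from log birational boundedness to DCC of volumes --- are left unproved or argued incorrectly, so it does not establish the conjecture.
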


Alexeev, cf. \cite{Alexeev94} and \cite{AM04}, proved \eqref{c_general} for surfaces.
Note that if $(X,\Delta)$ is a global quotient, then the coefficients of $\Delta$ belong
to the set $I=\ds{\{\, \frac{r-1}r \,|\, r\in \mathbb{N}\,\}}$, so that \eqref{t_volume} is a 
special case of \eqref{c_general}.  

We hope to give an affirmative answer to \eqref{c_general}, using some of the techniques
developed in this paper.  Let
$$
I=\{\, \frac{r-1}r \,|\, r\in \mathbb{N} \,\}.
$$
Assuming an affirmative answer to \eqref{c_general} for this particular set, it is
interesting to wonder what is the smallest possible volume.  Let
$$
(X,\Delta)=(\pr n.,\frac 12H_0+\frac 23H_1+\frac 67H_2+\dots +\frac {r_{n+1}}{r_{n+1}+1}H_{n+1}),
$$
where $\zlist H.,.{n+1}.$ are $n+2$ general hyperplanes and $\ilist r.$ are defined recursively
by:
$$
r_0=1 \qquad \text{and} \qquad r_{n+1}=r_n(r_n+1).
$$
Note that $(X,\Delta)\in \mathfrak{D}$.  It is easy to see that the volume of $K_X+\Delta$
is
$$
\frac 1{r_{n+2}^n}.
$$
\begin{question}\label{c_general-optimal} Find an explicit bound for
$$
\delta=\min \{\, \vol(X,K_X+\Delta) \,|\, (X,\Delta)\in \mathfrak{D} \,\}.
$$
\end{question}

The most optimistic answer to \eqref{c_general-optimal} would be
$$
\delta=\frac 1{r_{n+2}^n}.  
$$

Note that $c\geq \frac 1{\delta}$.  When $n=1$, we have
$$
\delta=\frac 1{r_3}=\frac 1{42},
$$
and the reciprocal is precisely the constant $c=42$.  On the other hand, one can check 
that $r_n$ grows roughly like
$$
a^{2^n},
$$
for some constant $a>1$, so that in general there is a huge difference between $r_n$ and
$c_n$.  In fact it is easy to check that 
$$
r_{n+1}=\prod_{i=0}^n (r_i+1),
$$
see \S 8 of \cite{Kollar95} for more details.

\begin{theorem}[Deformation invariance of log plurigenera]\label{t_dilp} Let 
$\pi\colon\map X.T.$ be a projective morphism of smooth varieties.  Suppose that
$(X,\Delta)$ is log canonical and has simple normal crossings over $T$.
\begin{enumerate}
\item If $K_X+\Delta$ is kawamata log terminal, and either $K_X+\Delta$ or $\Delta$ is big
over $T$, and $m$ is any positive integer such that $m\Delta$ is integral, then
$h^0(X_t,\ring X_t.(m(K_{X_t}+\Delta_t)))$ is independent of $t\in T$.
\item $\kappa_{\sigma}(X_t,K_{X_t}+\Delta_t)$ is independent of $t\in T$.  
\item $\vol(X_t,K_{X_t}+\Delta_t)$ is independent of $t\in T$.  
\end{enumerate} 
\end{theorem}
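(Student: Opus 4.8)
The plan is to reduce all three assertions to part (1), and to prove part (1) by means of an extension theorem for pluri--log--canonical forms.

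For part (1): since $T$ is connected and $t\mapsto h^0(X_t,\sO_{X_t}(m(K_{X_t}+\Delta_t)))$ is upper semicontinuous, it suffices to show this function is locally constant, i.e.\ that its value at an arbitrary $t_0\in T$ equals its generic (hence minimal) value $p$. First I would cut $T$ by general hyperplane sections through $t_0$ and pass to the preimage, reducing to the case where $T$ is a smooth curve and $t_0=0$ — all of the hypotheses (smoothness of $X$, log smoothness of $(X,\Delta)$ over $T$, the kawamata log terminal condition, and bigness over $T$) survive restriction to a general such curve. Then $X_0$ is a smooth prime divisor, $\pi$ is smooth along it, and adjunction gives $(K_X+\Delta)|_{X_0}=K_{X_0}+\Delta_0$ with $m\Delta_0$ integral. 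The sheaf $E:=\pi_*\sO_X(m(K_{X/T}+\Delta))$ is coherent and torsion free, hence locally free of rank $p$ on the smooth curve $T$; in particular $\dim_{k(0)}E\otimes k(0)=p$. The whole of part (1) then comes down to the assertion that, after shrinking $T$ about $0$, the natural map $E\otimes k(0)\to H^0(X_0,\sO_{X_0}(m(K_{X_0}+\Delta_0)))$ is surjective: granting this, $h^0(X_0,\sO_{X_0}(m(K_{X_0}+\Delta_0)))\le p$, which together with upper semicontinuity ($\ge p$) forces equality at $0$, and $0$ being arbitrary gives that $h^0$ is constant.

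The surjectivity onto the central fibre is an extension theorem of the kind proved in the course of the minimal model program (cf.\ \cite{HM05b}): one lifts a section of $m(K_{X_0}+\Delta_0)$ to a section of $m(K_X+\Delta)$ on the total space, the crucial point being that the boundary contains an ample part over $T$. When $\Delta$ is big over $T$ this is precisely the setting of those extension theorems. When only $K_X+\Delta$ is big over $T$, I would first replace $\Delta$ by $\Delta+\tfrac{1}{q}D$ for a general $D\in|q'(K_X+\Delta)|$ with $q\gg0$ — still kawamata log terminal and log smooth over $T$ (possibly after a log resolution over $T$), now with big boundary over $T$ and log canonical class proportional to $K_X+\Delta$ — and then argue as before, or else appeal to the variant of the extension theorem formulated directly for $K_X+\Delta$ big over $T$. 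This extension step is the main obstacle; everything else is formal.

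For parts (2) and (3) I would fix a general ample $\Q$-divisor $A$ on $X$ with small enough coefficients, so that $(X,(1-\delta)\Delta+\varepsilon A)$ is kawamata log terminal, log smooth over $T$, and has boundary big over $T$ for all small rational $\varepsilon>0$ and all $\delta\in(0,1)$. Part (1) then gives that $h^0(X_t,\sO_{X_t}(k(K_{X_t}+(1-\delta)\Delta_t+\varepsilon A_t)))$ is independent of $t$ for all admissible $k$, hence so is $\vol(X_t,K_{X_t}+(1-\delta)\Delta_t+\varepsilon A_t)$; letting $\delta\to0$ and using continuity of the volume, $\vol(X_t,K_{X_t}+\Delta_t+\varepsilon A_t)$ is independent of $t$ for every rational $\varepsilon>0$, and letting $\varepsilon\to0$ proves (3). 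For (2), I would use that the numerical dimension is read off from the volume function via $\kappa_\sigma(X_t,K_{X_t}+\Delta_t)=\dim X_t-\limsup_{\varepsilon\to0^+}(\log\vol(X_t,K_{X_t}+\Delta_t+\varepsilon A_t))/\log\varepsilon$ (Nakayama, and the comparison of numerical dimensions): since the function $\varepsilon\mapsto\vol(X_t,K_{X_t}+\Delta_t+\varepsilon A_t)$ is already known to be independent of $t$, so is its order of vanishing at $\varepsilon=0$, whence so is $\kappa_\sigma(X_t,K_{X_t}+\Delta_t)$. Thus the one genuinely hard input is the extension theorem above; the remainder is bookkeeping with semicontinuity, freeness over a discrete valuation ring, and the behaviour of the volume function.
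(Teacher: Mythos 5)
Your reductions (cutting $T$ down to a smooth curve through the chosen point, upper semicontinuity plus local freeness of $\pi_*\mathcal{O}_X(m(K_{X/T}+\Delta))$ over a curve, so that everything comes down to surjectivity of restriction onto the central fibre) are sound and run parallel to the paper, which likewise reduces (1) to the statement $|D_0|=|D|_{X_0}$ in \eqref{t_inv}. The genuine gap is the step you yourself flag as the main obstacle: you invoke the extension theorems of \cite{HM05b} as if ``klt, log smooth over $T$, boundary big over $T$'' were precisely their setting. It is not. The algebraic extension theorems available there (and in Takayama, \cite{BCHM06}) carry an additional hypothesis controlling the position of the boundary relative to the base locus --- roughly, that no component of $\Delta_0$, and more generally no stratum of $(X_0,\lceil\Delta_0\rceil)$, lies in the relevant stable or restricted base locus. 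In the generality of \eqref{t_dilp} this can fail: components of $\Delta_0$ may be contained in $N_\sigma(X_0,K_{X_0}+\Delta_0)$, and then a section of $m(K_{X_0}+\Delta_0)$ cannot be lifted by a direct appeal to those theorems. Handling exactly this failure is the content of the paper's proof: one replaces $\Delta_0$ by $\Theta_0=\Delta_0-\Delta_0\wedge N_{\sigma}(X_0,K_{X_0}+\Delta_0)$ (which does not change the sections in question), lifts $\Theta_0$ to a divisor $\Theta$ on the family, runs a relative MMP for $(X,\Theta+H)$ over $T$ whose restriction to the special fibre is again a weak log canonical model --- this is Proposition \eqref{p_mmp}, whose hypotheses include precisely that no component of $\Delta_0$ is in the stable base locus, and whose proof must show the MMP neither contracts components of the special fibre nor destroys the comparison of models --- and only then concludes by Kawamata--Viehweg vanishing on a common resolution. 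None of this machinery appears in your sketch; note that the authors of \cite{HM05b} are the authors here and prove \eqref{t_inv} and \eqref{p_mmp} from scratch exactly because the off-the-shelf extension theorems do not apply (an analytic substitute does exist, cf.\ \cite{BP10}, but that is a different and equally substantial theorem, not a corollary of \cite{HM05b}).

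Two further points. In the case where only $K_X+\Delta$ is big over $T$, adding $\tfrac 1q D$ for a general $D\in|q'(K_X+\Delta)|$ and then taking ``a log resolution over $T$'' is problematic: a log resolution of the enlarged pair which has simple normal crossings \emph{over $T$} need not exist near the special fibre (one only obtains this over a dense open subset of $T$), and the special fibre is the whole point; the paper absorbs this case inside the proof of \eqref{t_inv} by a different device. Second, your derivation of (2) rests on the identity $\kappa_\sigma(X_t,K_{X_t}+\Delta_t)=\dim X_t-\limsup_{\epsilon\to 0^+}\log\vol(X_t,K_{X_t}+\Delta_t+\epsilon A_t)/\log\epsilon$, i.e.\ on a comparison of different numerical dimensions; such comparisons are delicate and are not needed: the paper obtains (2) directly from (1) by applying it to $\Delta'=\frac{m-m_0}m\Delta+\frac 1m A$ with $K_X+\Delta'\sim_{\mathbb{Q}}K_X+\Delta+H/m$, which feeds straight into Nakayama's definition of $\kappa_\sigma$ via an auxiliary divisor, and then deduces (3) from (1) and (2) by perturbing the coefficients of $\Delta$ rather than adding an ample class.
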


For the definitions of $\kappa_{\sigma}$ and simple normal crossings over $T$, see
\eqref{ss-NC}.  We will prove a similar but stronger statement \eqref{t_inv} which implies
\eqref{t_dilp}.  Obviously, \eqref{t_dilp} is a generalisation of Siu's theorem on
invariance of plurigenera, cf.  \cite{Siu98}.  We recently learnt that (1) of
\eqref{t_dilp} holds even without the assumption that $K_X+\Delta$ is big, see Theorem 0.2
of \cite{BP10}.  We use \eqref{t_dilp} to prove:
\begin{theorem}\label{t_dcc} Fix a set $I\subset [0,1]$ which satisfies the DCC.  
Let $\mathfrak{D}$ be a set of simple normal crossings pairs $(X,\Delta)$, which is log
birationally bounded (cf. \eqref{d_birationally-bounded}), such that if $(X,\Delta)\in
\mathfrak{D}$, then the coefficients of $\Delta$ belong to $I$.

Then the set
$$
\{\, \vol(X,K_X+\Delta) \,|\, (X,\Delta)\in \mathfrak{D} \,\},
$$
satisfies the DCC.  
\end{theorem}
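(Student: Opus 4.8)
The plan is to use log birational boundedness to reduce $\vol(X,K_X+\Delta)$ to a volume computed on a single bounded family, and then to combine the deformation invariance of log plurigenera \eqref{t_dilp} with an elementary fact about sets satisfying the DCC.

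First I would set up the bounded family. By \eqref{d_birationally-bounded} there is a projective morphism $(Z,D)\to S$, with $S$ of finite type, such that for every $(X,\Delta)\in\mathfrak D$ there is a closed point $s\in S$ and a birational map $f\colon Z_s\dashrightarrow X$ under which the birational transform of $\Delta$, together with every $f$-exceptional divisor, is supported on $D_s$. Stratifying $S$ and replacing $Z$ by a log resolution of the total space, I may assume that $S$ is smooth and connected, $Z$ is smooth, $Z\cup D$ has simple normal crossings over $S$, and the number $N$ of components $D_s^1,\dots,D_s^N$ of $D_s$ is independent of $s$; since the original $S$ is of finite type this decomposes it into finitely many such pieces, and it suffices to treat one of them.

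Next comes the heart of the matter: attaching to each $(X,\Delta)\in\mathfrak D$ a companion pair on the family. Fix $s$ and $f$ as above and let $p\colon W\to Z_s$, $q\colon W\to X$ be a common log resolution. Writing $K_W+B_W=q^*(K_X+\Delta)+E$ with $E\ge 0$ $q$-exceptional and $B_W$ the minimal effective choice, log canonicity of $(X,\Delta)$ gives $B_W\ge 0$ and hence $\vol(W,K_W+B_W)=\vol(X,K_X+\Delta)$; moreover the boundedness hypothesis forces $\Supp B_W\subseteq\Supp(p^{-1}_*D_s)\cup\Exc(p)$. I want a boundary $\Theta_s$ on $Z_s$, supported on $D_s$, whose coefficient along a component equals the corresponding coefficient of $\Delta$ when that component is the birational transform of a component of $\Delta$ and is $0$ or $1$ otherwise — so that all coefficients lie in the DCC set $I_0:=I\cup\{0,1\}$ — and such that $\vol(X,K_X+\Delta)=\vol(Z_s,K_{Z_s}+\Theta_s)$. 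The main tool is a comparison of discrepancies: taking coefficient $1$ along the $f$-exceptional components makes $a(P;Z_s,\Theta_s)\le a(P;X,\Delta)$ for every divisor $P$ over the common function field, so that $p^*(K_{Z_s}+\Theta_s)-q^*(K_X+\Delta)\ge 0$. The delicate point, and the step I expect to be the main obstacle, is to arrange that this difference is $q$-exceptional — so that the two volumes actually coincide — which requires careful bookkeeping of the $f$-exceptional divisors on $Z_s$ and the $f^{-1}$-exceptional divisors on $X$, and may force me to blow the family up further in the first step so that no uncontrolled contribution survives.

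Finally I would conclude. Because $(Z,D)\to S$ is log smooth over the connected base $S$ and $\bigl(Z_s,\sum a_jD_s^j\bigr)$ is log canonical for every $\mathbf a\in I_0^N$, part $(3)$ of \eqref{t_dilp} shows that $v(\mathbf a):=\vol\bigl(Z_s,K_{Z_s}+\sum_j a_jD_s^j\bigr)$ does not depend on $s$. By the previous step the set $\{\,\vol(X,K_X+\Delta)\mid(X,\Delta)\in\mathfrak D\,\}$ is contained in $\{\,v(\mathbf a)\mid\mathbf a\in I_0^N\,\}$, so it is enough to show this last set has no infinite strictly decreasing subsequence. Now $v$ is nondecreasing in each coordinate, since enlarging a coefficient only adds an effective divisor. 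Given $\mathbf a_1,\mathbf a_2,\dots$ in $I_0^N$, repeatedly passing to subsequences (every real sequence has a monotone subsequence) I may assume each of the $N$ coordinate sequences is monotone; a nonincreasing one, taking values in the DCC set $I_0$, is eventually constant, so after discarding an initial segment every coordinate is nondecreasing, i.e. $\mathbf a_i\le\mathbf a_{i+1}$, and therefore $v(\mathbf a_i)\le v(\mathbf a_{i+1})$. Hence $\{v(\mathbf a)\}$ admits no strictly decreasing infinite sequence, which is exactly the DCC, and the theorem follows.
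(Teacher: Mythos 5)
There is a genuine gap, and it sits exactly where you flag ``the main obstacle'': the claim that for each $(X,\Delta)\in\mathfrak D$ one can find $\Theta_s$ supported on $D_s$, with coefficients in $I\cup\{0,1\}$, such that $\vol(X,K_X+\Delta)=\vol(Z_s,K_{Z_s}+\Theta_s)$. Putting coefficient one on the exceptional components only gives the inequality $\vol(Z_s,K_{Z_s}+\Theta_s)\ge\vol(X,K_X+\Delta)$, and equality genuinely fails: by (2) of \eqref{l_simple}, $\vol(X,K_X+\Delta)=\vol(X,K_X+\Delta\wedge\mathbf{L}_{\Phi,X})$ where $\Phi$ is the pushforward of $\Delta$, so whenever $\Delta$ assigns to an $f^{-1}$-exceptional divisor a coefficient strictly smaller than the coefficient of the log pullback $\mathbf{L}_{\Phi,X}$ (say $E$ the blow up of a point $B_1\cap B_2$ with $\mult_E\Delta=e'<b_1+b_2-1$), the volume of $(X,\Delta)$ can be strictly smaller than $\vol(Z_s,K_{Z_s}+\Phi)$ and varies with $e'$, which ranges over infinitely many elements of $I$; no boundary supported on $D_s$ with coefficients in a fixed finite-dimensional cube $I_0^N$ reproduces these values. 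Your proposed remedy---blow up the family further so that ``no uncontrolled contribution survives''---cannot succeed, because the divisors carrying these deficient coefficients live on arbitrarily high blow ups of $Z_s$ (there is no bound on how many divisors $f^{-1}$ extracts as $(X,\Delta)$ runs through $\mathfrak D$), so no fixed model of the family captures them; and if instead you record the coefficients induced on successive blow ups, you leave the DCC set, as \eqref{e-ndcc} shows.

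This missing step is the entire content of \eqref{p_limit}, which is why the paper's argument does not take your form: instead of computing each volume on the bounded model, one fixes a non-increasing sequence of volumes, passes (after subsequences) to the limit $\mathbf{B}$ of the $\mathbf{b}$-divisors $\mathbf{M}_{\Delta_i}$, and performs a ``reduction'' to a suitable higher model $Z'$ (constructed by the toric weight induction of \eqref{l_reduction}) so as to arrange $\mathbf{L}_{\Phi}\le\mathbf{B}$; only then are the volumes squeezed between $\vol(Z,K_Z+(1-\epsilon)\Phi)$ and $\vol(Z,K_Z+\Phi)$ and hence eventually constant. Your first and last steps are sound and do occur in the paper---the reduction to a single fibre of the bounded family via \eqref{t_dilp}, and the monotone-subsequence argument exploiting the DCC of $I$ and monotonicity of the volume in the coefficients---but note that in the paper the deformation invariance is applied to families $\map Z'.T.$ obtained by blowing up strata of $(Z,B)$ so as to reproduce the given $X$ in a family (after first modifying $(X,\Delta)$ so that $\map X.Z_t.$ only blows up strata), not to the original family; and even after that reduction one still faces pairs on arbitrary blow ups of $Z_0$, which is precisely what \eqref{p_limit} is needed to handle.
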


\subsection{Sketch of the proof of \eqref{t_volume}}

The proof of \eqref{t_volume} is by induction on the dimension $n$ and the proof is
divided into two steps.  The first step uses some ideas of Tsuji which are used to prove
that some fixed multiple of $K_X$ defines a birational map, for a variety $X$ of general
type, see \cite{Tsuji07}, \cite{HM05b} and \cite{Takayama06}.  In this step we establish
that modified versions of (2) and (3) of \eqref{t_volume} are equivalent, given that
\eqref{t_volume}$_{n-1}$ holds (that is, \eqref{t_volume} holds in dimension $n-1$).
Namely, consider 
\begin{enumerate} 
\setcounter{enumi}{1}
\item $\vol(X,r(K_X+\Delta))>\delta$, and
\item $\phi_{Mr(K_X+\Delta)}$ is birational.
\end{enumerate} 
We show that if $(X,\Delta)$ is a global quotient of dimension $n$, then there are
constants $\delta>0$ and $M$ such that for every positive integer $r$, (2) implies (3),
see \eqref{t_tsuji}.

It is clear that if some fixed multiple of $r(K_X+\Delta)$ defines a birational map, then
the volume of $r(K_X+\Delta)$ is bounded from below, \eqref{l_standard}, so that there are
constants $\delta$ and $M$ such that (3) implies (2).  To go the other way, we need to
construct a divisor $0\leq D \sim_{\mathbb{Q}} mr(K_X+\Delta)$, where $m$ is fixed, which
has an isolated non kawamata log terminal centre at a very general point and is not
kawamata log terminal at another very general point, \eqref{l_potential}.  As we know that
log canonical models exist by \cite{BCHM06}, we may assume that $K_X+\Delta$ is ample, so
that lifting divisors from any subvariety is simply a matter of applying Serre vanishing.
In this case, it is well known, since the work of Anghern and Siu, \cite{AS95}, that to
construct $D$ we need to bound the volume of $K_X+\Delta$ from below on special
subvarieties $V$ of $X$ (specifically, any $V$ which is a non kawamata log terminal centre
of $(X,\Delta+\Delta_0)$, where $\Delta_0$ is proportional to $K_X+\Delta$), see
\eqref{t_potential} and \eqref{t_recursive}.

If $V=X$, there is nothing to prove, as we are assuming that
$\vol(X,r(K_X+\Delta))>\delta$.  Otherwise, the dimension of $V$ is less than the
dimension of $X$, and we may proceed by induction on the dimension; as $V$ passes through
a very general point of $X$, $V$ is birational to a global quotient.  In fact, we even
know that $\vol(V,(K_X+\Delta)|_V)$ is bounded from below.

So from now on we assume that (2) implies (3).  We may suppose that the constant $\delta$
appearing in (2) is at most one.  The next step is to prove that (3) holds when $r=1$
(that is, (3) of \eqref{t_volume} holds).  There are two cases.  If the volume of
$K_X+\Delta$ is at least one, then the volume of $K_X+\Delta$ is certainly bounded from
below, and there is nothing to prove.  Otherwise, we may find $r>0$ such that
$\delta<1\leq \vol(X,r(K_X+\Delta))<2^n$.  But then $\phi_{m(K_X+\Delta)}$ is birational,
where $m=Mr$ and, at the same time, the volume of $m(K_X+\Delta)$ is bounded from above.
In this case, the degree of the image of $\phi_{m(K_X+\Delta)}$ is bounded from above, and
so we know that the image belongs to a bounded family.  In fact, one can prove that both
the degree of the image and the degree of the image of $\Delta$ and the exceptional locus
have bounded degree, \eqref{t_boundingimage}, so we only need to concern ourselves with a
log birationally bounded family of log pairs, \eqref{d_birationally-bounded}.  This
finishes the first step.

To finish the argument, we need to argue that the volume is bounded from below if we have
a log birationally bounded family of log pairs.  This is the most delicate part of the
argument and is the second step.  We use some ideas which go back to Alexeev.  Firstly, it
is not much harder to prove that the volume of global quotients satisfies the DCC.  The
first part of the second step is to argue that we only need to worry about log pairs
$(X,\Delta)$ which are birational to a single pair $(Z,B)$, rather than a bounded family
of log pairs.  For this we prove a version of deformation invariance of log plurigenera
for log pairs, see \eqref{t_dilp}.  Deformation invariance fails in general
(cf. \cite[4.10]{FH11}); we need to assume that the family has simple normal crossings
over the base (which roughly means that every component of $\Delta$ is smooth over the
base).  To reduce to this case involves some straightforward manipulation of a family of
log pairs, see the proof of \eqref{t_dcc} in \S \ref{s_bound-volume}.  We prove,
cf. \eqref{t_inv}, a version of \eqref{t_dilp} which is better suited to induction; to
this end, we first show that if we have a family of log pairs over a curve then we can run
the MMP in a family, \eqref{p_mmp}.

So we are reduced to the most subtle part of the argument.  We are given a simple normal
crossings pair $(Z,B)$, a set $I$ which satisfies the DCC, and we want to argue that if
$(X,\Delta)$ is a simple normal crossings pair such that there is a birational morphism
$f\colon\map X.Z.$ with $f_*\Delta\leq B$, then the volume of $K_X+\Delta$ belongs to a
set which satisfies the DCC, see \eqref{p_limit}.  To fix ideas, let us suppose that $Z$
is a smooth surface (this is the case originally treated by Alexeev).  We are given a
sequence of simple normal crossings surfaces $(X_i,\Delta_i)$ and birational morphisms
$f_i\colon\map X_i.Z.$.

We have that $\Phi_i=f_{i*}\Delta_i\leq B$ and the coefficients of $\Phi_i$ belong to $I$.  Note
that we are free to pass to an arbitrary subsequence, so that we may assume that
$\Phi_i\leq \Phi_{i+1}$.  In particular, the volume of $K_Z+\Phi_i$ is not decreasing.
The problem is that if we write
$$
K_{X_i}+\Delta_i=f_i^*(K_Z+\Phi_i)+E_i,
$$
then $E_i$ might have negative coefficients, so that the volume of $K_{X_i}+\Delta_i$ is
less than the volume of $K_Z+\Phi_i$.  Suppose that we write $E_i=E_i^+-E_i^-$, where
$E_i^+\geq 0$ and $E_i^-\geq 0$ have no common components.  Note that $E_i^+$ has no
effect on the volume, as $E_i$ is supported on the exceptional locus.  What bothers us is
the possibility that the $E_i^-$ involve exceptional divisors that live on higher and
higher models.

Clearly, we should consider the limit $\Phi=\lim_i\Phi_i$.  However, this is not enough,
we need to take the limit of the divisors $\Delta_i$ on various models, and to work with
linear systems on these models.  It was for just this purpose that the language of
$\mathbf{b}$-divisors was introduced by Shokurov.  Recall that a $\mathbf{b}$-divisor
$\mathbf{D}$ is just the choice of a divisor $\mathbf{D}_X$ on every model $X$, which is
compatible under pushforward.

For us there are three relevant $\mathbf{b}$-divisors.  Since we want to work on higher
models without changing the volume, or the fact that the coefficients lie in the set $I$,
we introduce, \eqref{d_one-b-divisor}, the $\mathbf{b}$-divisor $\mathbf{M}_{\Delta_i}$
associated to a log pair $(X_i,\Delta_i)$: given a model $\map Y.X_i.$ we just throw in
any exceptional divisor with coefficient one.  We next take the limit $\mathbf{B}$ of the
sequence of $\mathbf{b}$-divisors $\mathbf{M}_{\Delta_i}$; on $Z$ we just recover the
divisor $\Phi$.  Finally, we define \eqref{d_associated-b-divisor}, a $\mathbf{b}$-divisor
$\mathbf{L}_{\Delta_i}$ that assigns to a model $\pi\colon\map Y.X_i.$ the positive part
of the log pullback.  Actually, this is the most complicated of the three
$\mathbf{b}$-divisors, and it is the subtle behaviour of the $\mathbf{b}$-divisors
$\mathbf{L}_{\Delta_i}$ which complicates the proof.

If we set $\Delta_i'=\Delta_i\wedge\mathbf{L}_{\Phi_i,X_i}$, then, 
$$
\vol(X_i,K_{X_i}+\Delta'_i)=\vol(X_i,K_{X_i}+\Delta_i),
$$
see (2) of \eqref{l_simple}.  If we knew that the coefficients of $\Delta'_i$ belong to a
set $I\subset J$ that satisfies the DCC, then we would be done.  In fact, if
$\mathbf{L}_{\Phi}\leq \mathbf{B}$, then it is relatively straightforward to conclude that
the volume satisfies the DCC, see the proof of \eqref{p_limit}.  Unfortunately, since
$\map X_i.Z.$ might extract arbitrarily many divisors, it is all too easy to write down
examples where the smallest set $J$ which contains the coefficients of every $\Phi_i$ does
not satisfy the DCC (cf. \eqref{e-ndcc}).

Therefore, our objective is to find a model $\map Z'.Z.$ and suitable modifications
$\Delta'_i$ of $\Delta_i$ such that $\mathbf{L}_{\Phi'}\leq \mathbf{B}'$, where
$\mathbf{B}'$ is the limit of $\mathbf{M}_{\Delta'_i}$.  We choose $\Delta'_i$ so that the
difference $\Delta_i-\Delta'_i$ is supported only on the strict transform of the
exceptional divisors of $\map Z'.Z.$.  In this case, it is not hard to arrange for the
coefficients of $\Delta'_i$ to belong to a set $I\subset J$ that satisfies the DCC.

We construct $\map Z'.Z.$ by induction.  For this, we can work locally about a point $p$
in $Z$.  Suppose that $p$ is the intersection of two components $B_1$ and $B_2$ of $\Phi$.
If $B_1$ and $B_2$ appear with coefficient $b_1$ and $b_2$ in $\Phi$ and $\pi\colon \map
Z'.Z.$ blows up $p$, then working locally about $p$, we may write
$$
K_{Z'}+b_1B'_1+b_2B'_2+eE=\pi^*(K_Z+b_1B_1+b_2B_2),
$$
where $e=b_1+b_2-1$.  Here primes denote strict transforms and $E$ is the unique
exceptional divisor.  We suppose that $\mathbf{L}_{\Phi}\leq \mathbf{B}$ does not hold, so
that there is some valuation $\nu$, with centre $p$, such that
$\mathbf{L}_{\Phi}(\nu)>\mathbf{B}(\nu)$.  Since $e=b_1+b_2-1$, the larger $b_1$ and
$b_2$, the further we expect to be from the inequality $\mathbf{L}_{\Phi}\leq \mathbf{B}$.
We introduce the \textbf{weight} $w$, which counts the number of components of $\Phi$ of
coefficient one, that is, the number of $i$ such that $b_i=1$.  In the case of a surface,
the weight is $0$, $1$ or $2$ and it clearly suffices to construct $\map Z'.Z.$ so that
the weight goes down.

In fact, the extreme cases are relatively easy.  If the weight is two, then we just take
$\map Z'.Z.$ to be the blow up of $p$.  The key point is that then the base locus of the
linear system
$$
f'_{i*}|m(K_{X_i}+\Delta_i)|\subset |m(K_Z+\Phi_i)|,
$$
contains $p$ in its support for all $m$ sufficiently large and divisible and this forces
the strict transform of $E$ to be a component of $E_i^+$ as well.  Therefore we are free
to decrease the coefficient of $E$ in $\Phi'$ away from $1$.  At the other extreme, if the
weight is zero, then $(Z,\Phi)$ is kawamata log terminal and we may find $\map Z'.Z.$
which extracts every divisor of coefficient greater than zero.  In this case
$\mathbf{L}_{\Phi'}(\rho)=0$ for every valuation $\rho$ whose centre on $Z'$ is not a
divisor and the inequality $\mathbf{L}_{\Phi'}\leq \mathbf{B}'$ is trivial.

The hard case is when the weight is one, so that one of $b_1$ and $b_2$ is one.  Suppose
that $b_2=1$.  If $\nu$ is a valuation such that $\mathbf{L}_{\Phi}(\nu)>0$ then $\nu$
corresponds to a weighted blow up.  At this point it is convenient to use the language of
toric geometry.  A weighted blow up corresponds to a pair of natural numbers $(v_1,v_2)\in
\mathbb{N}^2$.  In fact, if
$$
\mathfrak{F}=\{\,v_1\in \mathbb{N}  \,|\, (1-b_1)v_1<1 \,\},
$$
then 
$$
\{\,(v_1,v_2)\in \mathbb{N}^2  \,|\, v_1\in \mathfrak{F} \,\},
$$
is the set of all valuations $\nu$ such that $\mathbf{L}_{\Phi}(\nu)>0$.  The crucial
point is that $\mathfrak{F}$ is finite.  For every element $f_1\in \mathfrak{F}$, we pick
$v_2\in \mathbb{N}$, which minimises $\mathbf{B}(f_1,v_2)$ (this makes sense as $I$
satisfies the DCC).  We then pick any simple normal crossings model $\map Z'.Z.$ on which
the centre of every one of these finitely many valuations is a divisor.  Using standard
toric geometry, one can check that on $Z'$ every valuation $\nu'$, whose centre belongs to
a component of $\Phi'$ of coefficient one, satisfies the property
$\mathbf{L}_{\Phi'}(\nu')\leq \mathbf{B}'(\nu')$.  It follows that the weight of
$(Z',\Phi')$ is zero and this completes the induction.  The details are contained in the
proof of \eqref{p_limit}.

\begin{example}\label{e-ndcc} Consider the behaviour of $\mathbf{L}_{\Phi}$ in a simple example.  
We use the notation above.  Note that $e=b_1+b_2-1$ is an increasing and affine linear
function of $b_1$ and $b_2$, whence a continuous function of $b_1$ and $b_2$.  We are only
concerned with the possibility that $e>0$, and in this case,
$\mathbf{L}_{\Phi,Z'}=b_1B'_1+b_2B'_2+eE$, by definition.  There are two interesting
points $p_1=B_1'\cap E$ and $p_2=B_2'\cap E$ lying over $p$, and the coefficients of the
divisors containing them are $b_1$, $e$ and $b_2$, $e$.  The problem is that we can blow
up along either point and keep going.

Suppose that
$$
I=\{\, \frac{r-1}r \,|\, r\in \mathbb{N} \,\}.
$$
Note that if $b_1=\frac ir$ and $b_2=\frac{r-1}r$ then 
$$
e=b_1+b_2-1=\frac ir+\frac{r-1}r-1=\frac{i-1}r.
$$
So, the smallest set $J$ which contains $I$ and which is closed under the operation of
picking any two elements $b_1$ and $b_2$ and replacing them by $b_1+b_2-1$ (provided 
this sum is non-negative) is $\mathbb{Q}\cap [0,1]$.  Clearly, this set does not satisfy the DCC.
\end{example}

\section{Preliminaries}

\makeatletter
\renewcommand{\thetheorem}{\thesubsection.\arabic{theorem}}
\@addtoreset{theorem}{subsection}
\makeatother

\subsection{Notation and Conventions}\label{ss-NC} We will use the notations in
\cite{KM98} and \cite{Lazarsfeld04b}.

If $D=\sum d_{i}D_{i}$ is a $\mathbb{Q}$-divisor on a normal variety $X$, then the
\textit{round down} of $D$ is $\rfdown D.=\sum \rfdown d_i.D_i$, where $\rfdown d.$
denotes the largest integer which is at most $d$, the \textit{fractional part} of $D$ is
$\{D\}=D-\rfdown D.$, and the \textit{round up} of $D$ is $\rfup D. =-\rfdown -D.$.  If
$D'=\sum d'_iD_i$ is another $\mathbb{Q}$-divisor, then $D\wedge D':=\sum \min\{d_i,
d'_i\}D_i$.

The sheaf $\ring X.(D)$ is defined by
$$
\ring X.(D)(U)=\{\, f\in k(X) \,|\, (f)|_U+D|_U\geq 0 \,\},
$$
so that $\ring X.(D)=\ring X.(\rfdown D.)$.  Similarly we define $|D|=|\rfdown D.|$.
If $X$ is normal, and $D$ is a $\mathbb{Q}$-divisor on $X$, the \textit{rational map $\phi_D$
associated to $D$} is the rational map determined by the restriction of $\rfdown D.$ to
the smooth locus of $X$. 

If $X$ is a normal projective variety and $D$ is a $\mathbb{Q}$-Cartier divisor,
$\kappa_{\sigma}(X,D)$ denotes the \textit{numerical Kodaira dimension}, which is defined
by Nakayama in \cite[V.2.5]{Nakayama04} as follows: let $H$ be a divisor on $X$, we define
$$
\sigma(X,D;H):=\max \{\, k\in \mathbb{N} \,|\, \limsup_{m\to \infty}\frac{h^0(X,H+mD)}{m^k}>0 \,\},
$$
and
$$
\kappa_{\sigma}(X,D) := \max\{\, \sigma(X,D;H) \,|\, \text{$H$ is a divisor}\,\}.
$$
If $D$ is pseudo-effective, we define $N_{\sigma}(X,D)$ as in \cite[III.4]{Nakayama04} or \cite[3.3.1]{BCHM06}.

A \textit{log pair} $(X,\Delta)$ consists of a normal variety $X$ and a $\mathbb{Q}$-Weil
divisor $\Delta\geq 0$ such that $K_X+\Delta$ is $\mathbb{Q}$-Cartier.  The
\textit{support} of $\Delta=\sum_{i\in I}d_i\Delta_i$ is the sum $D=\sum_{i\in
  I}\Delta_i$.  If $(X,\Delta)$ has simple normal crossings then a \textit{stratum} of
$(X,\Delta)$ is an irreducible component of the intersection $\cap_{j\in J}\Delta_j$,
where $J$ is a non-empty subset of $I$ (in particular, a stratum is always a proper closed
subset of $X$).  If we are given a morphism $\map X.T.$, then we say that $(X,\Delta)$ has
\textit{simple normal crossings over $T$} if $(X,\Delta)$ has simple normal crossings and
both $X$ and every stratum of $(X,\Delta)$ is smooth over $T$.  We say that the birational
morphism $f\colon\map Y.X.$ only \textit{blows up strata} of $(X,\Delta)$, if $f$ is the
composition of birational morphisms $f_i\colon\map X_{i+1}.X_i.$, $1\leq i\leq k$, with
$X=X_0$, $Y=X_{k+1}$, and $f_i$ is the blow up of a stratum of $(X_i,D_i)$, where $D_i$ is
the sum of the strict transform of $D$ and the exceptional locus.

A \textit{log resolution} of the pair $(X,\Delta)$ is a projective birational morphism
$\mu\colon\map Y.X.$ such that the exceptional locus is the support of a $\mu$-ample
divisor and $(Y,G)$ has simple normal crossings, where $G$ is the support of the strict
transform of $\Delta$ and the exceptional divisors.  Note that the extra assumption that
the exceptional locus is the support of a $\mu$-ample divisor is not standard.  However it
is convenient for our purpose, and it can be always achieved after possibly choosing a
higher model.  If we write
$$ 
K_Y+\Gamma+\sum a_iE_i=\mu^*(K_X + \Delta)
$$ 
where $\Gamma$ is the strict transform of $\Delta$, then $a_i$ is called the
\textit{coefficient of $E_i$ with respect to $(X,\Delta)$}.  Note that $-a_i$ is the
discrepancy of the pair $(X,\Delta)$, with respect to $E_i$, see \cite[2.25]{KM98}.  A
\textit{non kawamata log terminal centre} is the centre of any valuation $\nu$ whose
coefficient is at least one.

In this paper, we only consider valuations $\nu$ of $X$ whose centre on some birational
model $Y$ of $X$ is a divisor.  We say that a formal sum $\mathbf{B}=\sum a_{\nu}\nu$,
where the sum ranges over all valuations of $X$, is a \textit{$\mathbf{b}$-divisor}, if
the set
$$
F_X=\{\, \nu \,|\, \text{$a_{\nu}\neq 0$ and the centre of $\nu$ on $X$ is a divisor} \,\},
$$
is finite.  The \textit{trace} $\mathbf{B}_Y$ of $\mathbf{B}$ is the sum $\sum a_{\nu}
B_{\nu}$, where the sum now ranges over the elements of $F_Y$.  In fact, to give a
$\textbf{b}$-divisor is the same as to give a collection of divisors on every birational
model of $X$, which are compatible under pushforward.

\subsection{Log pairs}

\begin{definition}\label{d_global-quotient} We say that a log pair $(X,\Delta)$ is a
\textbf{global quotient} if there is a smooth quasi-projective variety $Y$ and a finite
subgroup $G\subset \Aut(Y)$ such that $X=Y/G$ and if $\pi\colon\map Y.X.$ is the quotient
morphism, then $K_Y=\pi^*(K_X+\Delta)$.
\end{definition}

Note that if $(X,\Delta)$ is a global quotient, then $X$ is $\mathbb{Q}$-factorial,
$(X,\Delta)$ is kawamata log terminal, and the coefficients of $\Delta$ belong to the
set
$$
\{\, \frac{r-1}r \,|\, r\in \mathbb{N}\,\},
$$
(cf. \cite[5.15, 5.20]{KM98}).

\begin{lemma}\label{l_round} If $(X,\Delta)$ is a log pair and the coefficients of
$\Delta$ are less than one, then
$$
\rfdown m\Delta.\leq \rfup (m-1)\Delta.,
$$
for every positive integer $m$, with equality if the coefficients of $\Delta$ belong to
the set $\ds{\{\, \frac{r-1}r \,|\, r\in \mathbb{N} \,\}}$.
\end{lemma}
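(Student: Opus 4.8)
The statement to prove is Lemma~\ref{l_round}: if $(X,\Delta)$ is a log pair whose coefficients are all $<1$, then $\rfdown m\Delta.\leq \rfup(m-1)\Delta.$ for every positive integer $m$, with equality when the coefficients lie in $\{\frac{r-1}r \mid r\in\mathbb{N}\}$. Since both sides are sums over the same set of prime divisors $\Delta_i$ with coefficients $d_i$, the plan is to reduce immediately to the numerical statement: for a single real number $d$ with $0\le d<1$ and every positive integer $m$,
$$
\rfdown{md}.\le \rcup{(m-1)d}.,
$$
and to determine when equality holds. Everything is then a one-variable computation with floor and ceiling functions, done coefficient by coefficient.

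For the inequality, I would argue as follows. Write $\rcup{(m-1)d}. = (m-1)d + \{-(m-1)d\}_{\mathrm{up}}$ where I mean the nonnegative quantity $\rcup{(m-1)d}.-(m-1)d\in[0,1)$; equivalently note $\rcup{(m-1)d}.\ge (m-1)d$. On the other side $\rfdown{md}.\le md$. So it suffices to see that the integer $\rfdown{md}.$ does not exceed the integer $\rcup{(m-1)d}.$; since $md - (m-1)d = d < 1$, the real numbers $md$ and $(m-1)d$ differ by less than $1$, hence $\rfdown{md}. \le \rfdown{(m-1)d + 1}. = \rfdown{(m-1)d}. + 1 \le \rcup{(m-1)d}. + 1$ — this is off by one, so I need to be a touch more careful. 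The clean way: $\rfdown{md}.$ is the largest integer $\le md$; I claim it is $\le \rcup{(m-1)d}.$. If not, then $\rfdown{md}. \ge \rcup{(m-1)d}. + 1 \ge (m-1)d + 1 > (m-1)d + d = md$ (using $d<1$), contradicting $\rfdown{md}.\le md$. This gives the inequality for each coefficient, hence $\rfdown{m\Delta}.\le\rcup{(m-1)\Delta}.$.

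For the equality case, suppose $d = \frac{r-1}{r}$ with $r\in\mathbb{N}$. Then $md = m - \frac{m}{r}$ and $(m-1)d = (m-1) - \frac{m-1}{r}$. Writing $m = qr + s$ with $0\le s\le r-1$, one computes $\rfdown{md}. = m - \rcup{m/r}.$ and $\rcup{(m-1)d}. = (m-1) - \rfdown{(m-1)/r}.$, and then checks the identity $\rcup{m/r}. = \rfdown{(m-1)/r}. + 1$, which holds for every positive integer $m$ (both sides equal $q+1$ when $1\le s\le r-1$, i.e. $r\nmid m$ region handled by $m=qr+s$; and when $r\mid m$, say $m=qr$, the left side is $q$ and the right side is $\rfdown{(qr-1)/r}.+1 = (q-1)+1 = q$). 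Substituting gives $\rfdown{md}. = \rcup{(m-1)d}.$ for each such coefficient, hence equality of the divisors. Conversely (if one wants the "only if", though the lemma as stated only asserts the "if"), equality forces each coefficient to satisfy $\rfdown{md}.=\rcup{(m-1)d}.$ for all $m$, which pins $d$ down to this form; but I would only include this direction if needed elsewhere.

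The argument is entirely elementary; the only mild subtlety — and the one place a sign or an off-by-one can creep in — is keeping the floor/ceiling bookkeeping straight in the chain of inequalities, and verifying the arithmetic identity $\rcup{m/r}. = \rfdown{(m-1)/r}.+1$ cleanly in both the $r\mid m$ and $r\nmid m$ cases. I would present the inequality via the short contradiction argument above (comparing $\rfdown{md}.$ against $(m-1)d+d$ using $d<1$), and the equality via the explicit substitution $d=\frac{r-1}r$ together with that identity, doing it for a general coefficient and then summing over the components of $\Delta$.
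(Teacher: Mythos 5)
Your proposal is correct, and it is exactly the elementary coefficient-by-coefficient verification the paper has in mind: the paper's own proof of this lemma consists of the single word ``Easy,'' so your argument simply supplies the omitted details. Both the contradiction argument for $\lfloor md\rfloor\leq\lceil (m-1)d\rceil$ when $0\leq d<1$ and the identity $\lceil m/r\rceil=\lfloor (m-1)/r\rfloor+1$ used for the equality case $d=\tfrac{r-1}{r}$ check out.
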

\begin{proof} Easy.  
\end{proof}

\subsection{The volume}

\begin{definition}\label{d_volume} Let $X$ be a normal $n$-dimensional irreducible
projective variety and let $D$ be a $\mathbb{Q}$-divisor.  The \textbf{volume} of $D$ is
$$
\vol(X,D) = \limsup_{m\to \infty}\frac{n! h^0(X,\ring X.(mD))}{m^n}.
$$
We say that $D$ is \textbf{big} if $\vol(X,D)>0$.
\end{definition}

For more background, see \cite{Lazarsfeld04b}.  We will need the following simple result:
\begin{lemma}\label{l_standard} Let $X$ be a projective variety, $D$ a divisor such that 
the rational map $\phi_D\colon\rmap X.{\pr n.}.$ is birational onto its image $Z$.  Then,
the volume of $D$ is greater than or equal to the degree of $Z$.  In particular the volume
of $D$ is at least $1$.
\end{lemma}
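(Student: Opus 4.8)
The plan is to realise the homogeneous coordinate ring of the image $Z$, in all sufficiently large degrees, as a graded subspace of the section ring $R(X,D)=\bigoplus_{m\ge 0}H^0(X,\mathcal{O}_X(mD))$ of $D$, and then to read off the growth rate of this subspace from asymptotic Riemann--Roch on $Z$. A pleasant feature of this route is that one never has to resolve the indeterminacy of $\phi_D$ or pull back any divisor: everything is carried out with rational functions on $X$ directly, so the possible singularities of $X$ cause no trouble.

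First I would reduce to the case that $D$ is an integral divisor. By convention $\phi_D=\phi_{\lfloor D\rfloor}$, and since $\lfloor mD\rfloor\ge m\lfloor D\rfloor$ we have $h^0(X,\mathcal{O}_X(mD))\ge h^0(X,\mathcal{O}_X(m\lfloor D\rfloor))$, hence $\vol(X,D)\ge\vol(X,\lfloor D\rfloor)$; so it suffices to treat $\lfloor D\rfloor$. Thus assume $D$ is integral and write $d=\dim X$ (to avoid clashing with the $n$ of $\mathbb{P}^n$). Write $\phi_D=[x_0:\dots:x_n]$, where $x_0,\dots,x_n$ is a basis of $H^0(X,\mathcal{O}_X(D))$, regarded as rational functions on $X$ with $\operatorname{div}(x_i)+D\ge 0$, and let $Z\subseteq\mathbb{P}^n$ be the closure of the image; since $\phi_D$ is birational onto $Z$ we have $\dim Z=d$.

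The key step is the following. A product of $m$ rational functions $f$ with $\operatorname{div}(f)+D\ge 0$ is a rational function $f'$ with $\operatorname{div}(f')+mD\ge 0$, so the substitution $z_i\mapsto x_i$ defines a homomorphism of graded rings $k[z_0,\dots,z_n]\to R(X,D)$. If $g$ is a homogeneous polynomial vanishing on $Z$, then $g(x_0,\dots,x_n)$, viewed as a rational function on $X$, vanishes wherever $\phi_D$ is defined (its value there depends only on $\phi_D(p)\in Z$), hence is identically zero; so this homomorphism kills the homogeneous ideal $I_Z$ and factors through $S(Z)=k[z_0,\dots,z_n]/I_Z$. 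The induced graded map $S(Z)\to R(X,D)$ is injective: if $g(x_0,\dots,x_n)=0$ in $k(X)$ then $g$ vanishes on $\phi_D(X)$, hence on its closure $Z$, so $g\in I_Z$. Because $\mathcal{O}_Z(1)=\mathcal{O}_{\mathbb{P}^n}(1)|_Z$ is very ample, the exact sequence $0\to\mathcal{I}_Z(m)\to\mathcal{O}_{\mathbb{P}^n}(m)\to\mathcal{O}_Z(m)\to 0$ together with Serre vanishing $H^1(\mathbb{P}^n,\mathcal{I}_Z(m))=0$ gives $S(Z)_m=H^0(Z,\mathcal{O}_Z(m))$ for $m\gg 0$. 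Consequently $h^0(X,\mathcal{O}_X(mD))\ge h^0(Z,\mathcal{O}_Z(m))$ for all $m\gg 0$.

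Finally, Serre vanishing on $Z$ gives $h^0(Z,\mathcal{O}_Z(m))=\chi(Z,\mathcal{O}_Z(m))$ for $m\gg 0$, which is a polynomial in $m$ of degree $d$ with leading coefficient $\deg(Z)/d!$, so $\lim_{m\to\infty}d!\,h^0(Z,\mathcal{O}_Z(m))/m^d=\deg Z$. Combining with the previous paragraph,
$$
\vol(X,D)=\limsup_{m\to\infty}\frac{d!\,h^0(X,\mathcal{O}_X(mD))}{m^d}\ \ge\ \deg Z\ \ge\ 1,
$$
the last inequality since $Z$ is a nonempty projective variety embedded by a very ample invertible sheaf. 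The only point needing care is the key step: verifying that substitution genuinely lands in $R(X,D)$ (products of sections remain sections) and that the induced map on $S(Z)$ is injective exactly because $Z$ is the closure of the image of $\phi_D$; the remainder is routine. One could instead resolve $\phi_D$ by $\mu\colon Y\to X$, form the nef divisor $M=\psi^*\mathcal{O}_{\mathbb{P}^n}(1)$ for the induced morphism $\psi\colon Y\to\mathbb{P}^n$, note that $\psi$ factors birationally through $Z$ so $\vol(Y,M)=M^d=\deg Z$, and then compare $\vol(X,D)$ with $\vol(Y,M)$; but on a singular $X$ this comparison (pulling back non-Cartier divisors, normality of $Z$) is no cleaner than the argument above, so I would prefer the section-ring version.
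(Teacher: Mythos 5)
Your proof is correct. Note that the paper does not actually prove this lemma: it simply cites (2.2) of \cite{HM05b}, where the standard argument is the one you sketch at the end --- resolve the indeterminacy of $\phi_D$ by $\mu\colon Y\to X$, split off the mobile part $M$ of $\mu^*|D|$, observe that $M$ is basepoint free with $M^d=\deg Z$ because the induced morphism to $Z$ is birational, and compare volumes. Your route instead embeds the homogeneous coordinate ring $S(Z)$ into the section ring $R(X,D)$ via $z_i\mapsto x_i$, checks that the kernel is exactly $I_Z$ because $Z$ is the closure of the image, and reads off $h^0(X,mD)\ge h^0(Z,\mathcal{O}_Z(m))$ for $m\gg 0$, whose growth is $\deg(Z)\,m^d/d!$ by the Hilbert polynomial; birationality enters only through $\dim Z=\dim X$ (generic finiteness would do). This buys you a genuinely self-contained argument that never pulls back a (possibly non-Cartier) divisor to a resolution, so singularities of $X$ cause no friction; the resolution argument, by contrast, is shorter if one is willing to quote basepoint-free splitting and projection formula facts, and it localises the geometry (the nef divisor $M$) in a way that is reused elsewhere in this circle of ideas. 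All the delicate points in your write-up --- multiplicativity of sections ($\operatorname{div}(f_1\cdots f_m)+mD\ge 0$), the identification of the kernel with $I_Z$ via density of the image of the locus of definition, the identification $S(Z)_m=H^0(Z,\mathcal{O}_Z(m))$ for $m\gg0$ by Serre vanishing, and the reduction to $\lfloor D\rfloor$ using $\lfloor mD\rfloor\ge m\lfloor D\rfloor$ --- are handled correctly.
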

\begin{proof} This is well known, see for example (2.2) of \cite{HM05b}.
\end{proof}

\begin{definition}\label{d_potential} Let $X$ be a normal projective variety and let
$D$ be a big $\mathbb{Q}$-Cartier $\mathbb{Q}$-divisor on $X$.

If $x$ and $y$ are two very general points of $X$ then, possibly switching $x$ and $y$, we
may find $0\leq \Delta\sim_{\mathbb{Q}}(1-\epsilon) D$, for some $0<\epsilon <1$, where
$(X,\Delta)$ is not kawamata log terminal at $y$, $(X,\Delta)$ is log canonical at $x$ and
$\{x\}$ is a non kawamata log terminal centre, then we say that $D$ is \textbf{potentially
  birational}.
\end{definition}

\begin{lemma}\label{l_potential} Let $X$ be a normal projective variety and let
$D$ be a big $\mathbb{Q}$-Cartier divisor on $X$.

\begin{enumerate}
\item If $D$ is potentially birational, then $\phi_{K_X+\rfup D.}$ is birational.
\item If $X$ has dimension $n$ and $\phi_D$ is birational, then $(2n+1)\rfdown D.$ is
potentially birational.  In particular, $\phi_{K_X+(2n+1)D}$ is birational and
$K_X+(2n+1)D$ is big.
\end{enumerate}
\end{lemma}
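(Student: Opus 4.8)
The plan is to prove the two parts using standard techniques from the theory of multiplier ideals and non-kawamata-log-terminal centres, in the spirit of the work of Angehrn--Siu, Nadel, and Kollár. For part (1), suppose $D$ is potentially birational and let $x, y$ be two very general points of $X$. By hypothesis there is $0 \leq \Delta \sim_{\mathbb Q} (1-\epsilon)D$ with $(X,\Delta)$ log canonical at $x$, with $\{x\}$ an isolated non-kawamata-log-terminal centre, and not kawamata log terminal at $y$. I would pass to a log resolution and consider the multiplier ideal sheaf; the key point is that after a small perturbation (tie-breaking, using that $x$ is an \emph{isolated} non-klt centre) one arranges that $x$ is cut out as a connected component of the non-klt locus, so the adjoint ideal sequence
$$
0 \longrightarrow \mathcal J(X,\Delta)\otimes \ring X.(K_X + \rfup D.) \longrightarrow \ring X.(K_X+\rfup D.) \longrightarrow \ring {\{x\}}. \oplus (\text{stuff at } y) \longrightarrow 0
$$
is exact at $x$. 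Since $\rfup D. - \Delta \sim_{\mathbb Q} \epsilon D$ is big, Nadel vanishing (in the form valid for big and nef after passing to a model, or Kawamata--Viehweg on a resolution) kills the relevant $H^1$, so sections of $K_X + \rfup D.$ separate $x$ from $y$. As $x$ and $y$ are arbitrary very general points, this shows $\phi_{K_X + \rfup D.}$ is birational onto its image. The only delicacy is the standard perturbation argument to make the non-klt centre at $x$ isolated and reduced, and to ensure the support of $\Delta$ avoids the finitely many bad loci away from $x, y$; this is routine but must be stated carefully.

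For part (2), assume $\phi_D$ is birational onto its image $Z \subset \pr n.$. I would fix two very general points $x, y \in X$; since $\phi_D$ is birational, $D$ separates $x$ from $y$, and moreover for a general member $H \in |\rfdown D.|$ through $x$ one has $\mult_x H \geq 1$, and in fact one can produce a divisor in $|\rfdown D.|$ with high multiplicity at $x$ by taking sufficiently many general members. The aim is to build, out of $(2n+1)\rfdown D.$, a $\mathbb Q$-divisor $\Delta \sim_{\mathbb Q} (1-\epsilon)(2n+1)D$ witnessing potential birationality. The standard recipe: choose general divisors $D_1, \dots, D_{n}$ in $|\rfdown D.|$ through $x$, so that $\sum D_i$ has a component of the non-klt locus of $(X, \lambda \sum D_i)$ passing through $x$ for suitable $\lambda$; then cut down inductively, using at each step of the Angehrn--Siu induction that a general member of $|\rfdown D.|$ restricted to the current non-klt centre $V$ still has enough sections (here one uses that $D|_V$ is big and that $V$ passes through a very general point) to lower the dimension of the centre. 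After at most $n$ steps one reaches an isolated non-klt centre equal to $\{x\}$, using a total coefficient bounded by $n \cdot \tfrac{1}{1} + \dots$; a clean count shows $2n+1$ copies of $\rfdown D.$ suffice, with room to spare for the $(1-\epsilon)$ slack and to guarantee non-klt also at $y$. Then part (1) applied to $(2n+1)\rfdown D.$ (noting $\rfup (2n+1)\rfdown D.. = (2n+1)\rfdown D.$, so $K_X + (2n+1)D$ has the same round-down) gives that $\phi_{K_X+(2n+1)D}$ is birational, and bigness follows from Lemma \ref{l_standard}.

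The main obstacle is the inductive cutting-down argument in part (2): one must control, at each stage of the Angehrn--Siu induction, that the restriction of $\rfdown D.$ to the current non-klt centre $V$ has a section vanishing to prescribed order at the very general point, \emph{without} any positivity assumption on $D$ beyond bigness. The resolution is that $\phi_D$ is birational, hence $\phi_{D|_V}$ is generically finite (indeed one can arrange birational) for $V$ through a very general point, so $D|_V$ is big on $V$ and $\vol(V, D|_V) \geq 1$ by Lemma \ref{l_standard}; this lower bound on the volume feeds the next induction step. Organizing this induction so that the accumulated coefficient stays below what $2n+1$ copies allow — and simultaneously maintaining log canonicity at $x$ while forcing non-klt at $y$ via a separate general section — is where all the bookkeeping lies, but no new idea beyond the classical multiplier-ideal machinery is needed.
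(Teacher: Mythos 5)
Your part (1) is essentially the paper's argument (tie--breaking plus Nadel vanishing), with one caveat: Nadel vanishing needs the relevant difference to be \emph{ample} (or nef and big), and ``$\lceil D\rceil-\Delta\sim_{\mathbb Q}\epsilon D$ is big'' is not by itself enough. The paper fixes this in the standard way, writing $D\sim_{\mathbb Q}A+B$ with $A$ ample, $B\geq 0$, tie--breaking with $A$ and absorbing $\epsilon B$ into the boundary so that the difference becomes $\epsilon A$; you should do the same rather than appeal to vanishing ``for big''.

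Part (2) has a genuine gap. You propose to run the Angehrn--Siu/Koll\'ar cutting-down induction, feeding it only the bound $\vol(V,D|_V)\geq 1$ coming from \eqref{l_standard}, and you assert that ``a clean count shows $2n+1$ copies suffice, with room to spare''. That count does not work: the cutting step \eqref{t_potential} on a $k$-dimensional centre requires $\vol(V,H|_V)>2k^k$, so with only $\vol(V,D|_V)\geq 1$ you must take $H$ to be roughly $2^{1/k}k$ copies of $D$ at that step, and creating the initial non-klt point at a very general point of $X$ already costs about $n$ copies (you need volume $>n^n$). Summing over the steps gives a multiple of $D$ on the order of $n^2$, not $2n+1$, so the constant in the statement is not reached by this route without a substantially sharper argument. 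In addition, \eqref{t_potential} is stated for an \emph{ample} $H$ (the lifting of divisors from $V$ to $X$ uses this positivity), whereas here $D$ is merely big with $\phi_D$ birational, so even running the induction needs extra care. The paper avoids all of this with an elementary observation you have missed: since $\phi_D$ is birational onto its image and $x,y$ are general, $\phi_D$ is an isomorphism near $x$ and $y$; taking $\Delta$ to be the strict transform of $n$ general hyperplanes through $\phi_D(x)$ plus $n$ general hyperplanes through $\phi_D(y)$ gives an ordinary multiplicity-$n$ point at $x$ (so $(X,\Delta)$ is log canonical at $x$ and the blow-up of $x$ has an exceptional divisor of coefficient one, making $\{x\}$ a non-klt centre) and a non-klt point at $y$, with $\Delta$ dominated by roughly $2nD$, hence $\sim_{\mathbb Q}(1-\epsilon)(2n+1)\lfloor D\rfloor$ after adding an effective divisor. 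No induction, volume bounds, or lifting is needed, and $(2n+1)$ is exactly the resulting constant; bigness of $K_X+(2n+1)D$ then follows from (1) together with \eqref{l_standard}, as you say.
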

\begin{proof} Replacing $X$ by a resolution, we may assume that $X$ is smooth.  As $D$ is
big, we may write $D\sim_\mathbb{Q} A+B$ where $A$ is an ample $\mathbb{Q}$-divisor and
$B\geq 0$.  Using $A$ to tie break (cf. \cite[6.9]{Kollar95}), we may assume that
$(X,\Delta)$ is kawamata log terminal in a punctured neighbourhood of $x$.  As
$$
\rfup D.-(\Delta+\epsilon B+\rfup D.-D) \sim_\mathbb{Q} \epsilon A,
$$
is ample, Nadel vanishing implies that
$$
H^1(X,\ring X.(K_X+\rfup D.)\otimes \mathcal{J}(\Delta+\epsilon B+\rfup D.-D))=0,
$$
where $\mathcal{J}(\Delta+\epsilon B+\rfup D.-D)$ is the multiplier ideal sheaf.  But then
we may find a section $\sigma\in H^0(X,\ring X.(K_X+\rfup D.))$ vanishing at $y$ but not
at $x$.  In particular, as $y$ is very general, we may also find $\tau$ not vanishing at
$y$.  But then some linear combination $\rho$ of $\tau$ and $\sigma$ is a section which
vanishes at $x$ and not at $y$.  This is (1).

Replacing $D$ by $\rfdown D.$, we may assume that $D$ is Cartier.  Let $X'$ be the image
of $\phi_D$.  Let $x'=\phi_D(x)$ and $y'=\phi_D(y)$ and let $\Delta'$ be the sum of $n$
general hyperplanes through $x'$ and $n$ general hyperplanes through $y'$.  Let $\Delta$
be the strict transform of $\Delta'$.  As $x$ and $y$ are general, $\phi_D$ is an
isomorphism in a neighbourhood of $x$ and $y$.  It follows that $(X,\Delta)$ is not
kawamata log terminal at $y$, $(X,\Delta)$ is log canonical at $x$ and if we blow up $x$
then the coefficient of the exceptional divisor is one.  It is then easy to see that
$(2n+1)D$ is potentially birational and (2) follows from (1).
\end{proof}

We will need the following result from \cite{Kollar95}:
\begin{theorem}\label{t_potential} Let $(X,\Delta)$ be a kawamata log terminal pair,  where
$X$ is projective.  Suppose that $x$ and $y$ are two closed points of $X$.  Let
$\Delta_0\geq 0$ be a $\mathbb{Q}$-Cartier divisor on $X$ such that $(X,\Delta+\Delta_0)$
is log canonical in a neighbourhood of $x$ but not kawamata log terminal at $y$, and there
is a non kawamata log terminal centre $V$ which contains $x$.  Let $H$ be an ample
$\mathbb{Q}$-divisor on $X$ such that $\vol(V,H|_V)>2 k^k$, where $k=\dim V$.

Then, possibly switching $x$ and $y$, there is a $\mathbb{Q}$-divisor $H\sim_{\mathbb{Q}}
\Delta_1\geq 0$ and rational numbers $0<a_i\leq 1$ such that
$(X,\Delta+a_0\Delta_0+a_1\Delta_1)$ is log canonical in a neighbourhood of $x$ but not
kawamata log terminal at $y$, and there is a non kawamata log terminal centre $V'$ which
contains $x$, such that $\dim V'<k$.
\end{theorem}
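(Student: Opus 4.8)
The plan is to push the problem onto the centre $V$: to create there a non kawamata log terminal point out of the volume hypothesis, and then carry the construction back up to $X$ using the ampleness of $H$. If a non kawamata log terminal centre of dimension $<k$ through $x$ is already present, there is nothing to do, so assume $V$ is the smallest non kawamata log terminal centre through $x$. Perturbing by a small effective $\mathbb{Q}$-divisor proportional to $H$ and tie-breaking in the standard way (cf.\ \cite[6.9]{Kollar95}) --- the perturbation being absorbed into the final $\Delta_1$ --- I may assume $V$ is a minimal non kawamata log terminal centre, with a unique non kawamata log terminal place over its generic point, of coefficient one. By Kawamata's subadjunction, $V$ is then normal, and for every $\epsilon>0$ there is an effective $\mathbb{Q}$-divisor $\Theta\ge 0$ on $V$ with $(V,\Theta)$ kawamata log terminal and
$$
(K_X+\Delta+\Delta_0+\epsilon H)|_V\sim_{\mathbb{Q}}K_V+\Theta,
$$
the error term being kept proportional to $H$, which costs nothing since all that is needed is $\mathbb{Q}$-linear equivalence.

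Next I produce the singularity on $V$, where $H|_V$ is ample with $\vol(V,H|_V)>2k^k$. A count of sections of $m(H|_V)$ vanishing to high order at $x$ --- performed on a log resolution of $V$, to sidestep the local structure of $V$ at $x$ --- yields an effective $\mathbb{Q}$-divisor $0\le D_V\sim_{\mathbb{Q}}\lambda\,(H|_V)$, for some rational $\lambda<1$, of multiplicity more than $k$ at $x$; hence $(V,D_V)$, and a fortiori $(V,\Theta+D_V)$, is not kawamata log terminal at $x$, with a non kawamata log terminal centre $W\subsetneq V$, so $\dim W<k$. The strict inequality $\vol(V,H|_V)>2k^k$ is exactly the room needed to beat, with a factor to spare, the discrepancy $k-1$ of a blow up of a point of a $k$-fold. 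When $x$ lies on a locus of $V$ where this count is obstructed, one runs the construction at $y$ instead; the same bound applies, since $H$ does not distinguish the two points, and this is the source of the ``possibly switching $x$ and $y$'' clause.

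Finally I transport the construction back to $X$. As $H$ is ample, the restriction $H^0(X,\ring X.(mH))\to H^0(V,\ring V.(m(H|_V)))$ is surjective for $m$ sufficiently divisible, by Serre vanishing for the ideal sheaf of $V$, so I may lift $D_V$ to an effective $\mathbb{Q}$-divisor $\Delta_1\sim_{\mathbb{Q}}H$ on $X$ with $a_1\Delta_1|_V\ge D_V$, where $a_1=\lambda$. Take $a_0=1$; then $(X,\Delta+a_0\Delta_0+a_1\Delta_1)$ is still not kawamata log terminal at $y$, by monotonicity, as its boundary only exceeds that of $(X,\Delta+\Delta_0)$. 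Shrinking $a_1$ to the largest value in $(0,\lambda]$ for which the pair is log canonical in a neighbourhood of $x$, and tie-breaking once more, the non kawamata log terminal locus near $x$ becomes a single centre $V'$; because $V$ was arranged to be a minimal non kawamata log terminal centre of coefficient one, the subadjunction formula together with the failure of kawamata log terminality of $(V,\Theta+a_1D_V)$ at $x$ forces $V'\subseteq W$, whence $\dim V'<k$.

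The step I expect to be the main obstacle is the converse adjunction hidden in the last paragraph: passing from ``$(V,\Theta+a_1D_V)$ is not kawamata log terminal at $x$'' back to ``$(X,\Delta+a_0\Delta_0+a_1\Delta_1)$ has a non kawamata log terminal centre of dimension $<k$ at $x$.'' Kawamata's subadjunction gives a $\mathbb{Q}$-linear equivalence only after an $\epsilon$-ample correction, and it pins down only the kawamata log terminal part $\Theta$ of the different, so the reverse implication needs a careful inversion-of-adjunction argument along $V$ (or an adjoint-ideal computation), carried out while simultaneously guaranteeing that the dimension of the centre genuinely drops rather than reproducing $V$, and that the singularity at $y$ survives. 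The rest --- the two tie-breakings, the choice of $\lambda$, and the verification that $0<a_i\le 1$ --- is then routine.
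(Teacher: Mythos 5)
Your outline follows the same strategy as the source the paper relies on: the paper's own proof of \eqref{t_potential} is a two-line reduction to Koll\'ar's results ((6.9.1), (6.8.1), (6.8.1.3) and (6.5) of \cite{Kollar95}, i.e.\ the Angehrn--Siu cutting argument \cite{AS95}), so what you are attempting is a reproof of the cited result. The problem is that the step you flag at the end as ``the main obstacle'' is not a technical loose end --- it is the entire content of the theorem, and you have not supplied it. Kawamata's subadjunction only goes one way: it produces a klt boundary $\Theta$ on $V$ from the pair upstairs after an $\epsilon$-ample twist, and there is no black-box converse (``inversion of adjunction along a minimal non kawamata log terminal centre of arbitrary codimension'') that converts non-klt-ness of $(V,\Theta+a_1D_V)$ at $x$ into a non kawamata log terminal centre of $(X,\Delta+a_0\Delta_0+a_1\Delta_1)$ through $x$ of dimension $<k$ while keeping the pair log canonical near $x$. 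Note in particular that the unique log canonical place of $V$ dominates $V$ and $V\not\subset\Supp\Delta_1$, so adding $a_1\Delta_1$ does not change the coefficient of that place at all; taking $a_1$ to be the log canonical threshold near $x$ may therefore simply reproduce $V$ as the critical centre, or produce a new centre that misses $x$, unless one proves the descent statement. This is precisely what (6.5) and (6.8.1.3) of \cite{Kollar95} are engineered to do, by an inductive perturbation/connectedness argument that avoids any appeal to subadjunction; your sketch replaces it by the phrase ``needs a careful inversion-of-adjunction argument,'' which is where the proof actually lives.

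Two secondary points. First, your explanation of ``possibly switching $x$ and $y$'' (run the multiplicity count at $y$ when it is obstructed at $x$) is not right: $y$ need not lie on $V$ and carries no volume hypothesis. The switch comes from the tie-breaking step itself: to make $V$ the unique centre with a unique place one replaces $\Delta_0$ by $a_0\Delta_0$ with $a_0<1$ plus a small ample perturbation, and scaling $\Delta_0$ down can destroy the non-klt-ness at $y$; one compares the relevant thresholds at $x$ and $y$ and switches the two points if necessary (this is also why the statement allows $a_0\le 1$, and why your later ``take $a_0=1$ and use monotonicity at $y$'' does not come for free). Second, the construction of $D_V$ with $\mult_x D_V>k$ is a smooth-point argument: at a singular point of $V$ the section count imposes more conditions, and the implication ``multiplicity $>k$ implies non-klt'' is a smooth-point statement; your proposed fix of imposing the multiplicity at a point of a log resolution over $x$ reintroduces the (possibly large, positive) discrepancies of $(V,\Theta)$ over $x$, which the bound $\vol(V,H|_V)>2k^k$ does not dominate. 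Since the theorem is stated for arbitrary closed points $x$, this case is genuinely in scope and also needs the bookkeeping carried out in \cite{Kollar95} rather than a remark.
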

\begin{proof} By (6.9.1) of \cite{Kollar95}, we may assume that $V$ is the unique non
kawamata log terminal centre which contains $x$, and we may apply (6.8.1), (6.8.1.3) and
(6.5) of \cite{Kollar95}.
\end{proof}

\begin{theorem}\label{t_recursive} Let $(X,\Delta)$ be a kawamata log terminal pair, where
$X$ is projective of dimension $n$ and let $H$ be an ample $\mathbb{Q}$-divisor.  Suppose
$\gamma_0\geq 1$ is a constant such that $\vol(X,\gamma_0H)>n^n$.  Suppose $\epsilon$ is a
constant with the following property:

For very general $x$ in $X$ and every $0\leq \Delta_0 \sim_{\mathbb{Q}}\lambda H$ such
that $(X,\Delta+\Delta_0)$ is log canonical at $x$, if $V$ is the minimal non kawamata log
terminal centre containing $x$, then $\vol(V,\lambda H|_V)>\epsilon^k$, where $k$ is the
dimension of $V$ and $\lambda\geq 1$ is a rational number.

Then $mH$ is potentially birational, where
$$
m=2\gamma_0\left (1+\gamma\right)^{n-1} \qquad \text{and} \qquad \gamma=\frac {2n}{\epsilon}.
$$
\end{theorem}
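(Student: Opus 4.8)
The plan is to prove this by induction on the dimension $n$, using Theorem \ref{t_potential} as the engine that drops the dimension of the non kawamata log terminal centre by one at each step. The base case $n=0$ is trivial. For the inductive step, fix a very general point $x\in X$. Since $\vol(X,\gamma_0 H)>n^n\geq 2k^k$ trivially for $k=n$ when we have set things up correctly — more precisely, since $\vol(X,\gamma_0H)>n^n$ — we can use the hypothesis with $\Delta_0=0$ and $V=X$ to start: actually we want to first \emph{create} a non kawamata log terminal centre through $x$. So the first step is: given two very general points $x,y$, use $\vol(X,\gamma_0H)>n^n$ together with a standard argument (cf. the proof of \ref{l_potential}, or \cite[6.7.1]{Kollar95}) to produce $0\leq \Delta_0\sim_{\mathbb Q}\gamma_0 H$ (possibly after scaling, $\lambda_0\leq \gamma_0$) such that $(X,\Delta+\Delta_0)$ is log canonical at $x$, not kawamata log terminal somewhere, and has a non kawamata log terminal centre $V_0$ through $x$ with $\dim V_0<n$. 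Set $k_0=\dim V_0$.

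The second step is the iteration. Suppose at stage $j$ we have a non kawamata log terminal centre $V_j$ through $x$ of dimension $k_j$, cut out using a boundary $\Delta+\sum_{i\le j}a_i\Delta_i$ with $\Delta_i\sim_{\mathbb Q}\lambda_i H$, $\lambda_i\ge 1$. By the displayed hypothesis of the theorem applied with this boundary, $\vol(V_j,\lambda_j H|_{V_j})>\epsilon^{k_j}$, provided $V_j$ is the \emph{minimal} centre through $x$ (which we may assume by \cite[6.9.1]{Kollar95}, as in the proof of \ref{t_potential}). Now I want to apply Theorem \ref{t_potential} with $H$ there replaced by a suitable multiple $\mu_{j+1}H$ of our $H$: we need $\vol(V_j,\mu_{j+1}H|_{V_j})>2k_j^{k_j}$. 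Since $\vol(V_j,\lambda_j H|_{V_j})>\epsilon^{k_j}$ and volume scales like $\mu^{k_j}$, it suffices to take $\mu_{j+1}=\lambda_j\cdot(2k_j^{k_j}/\epsilon^{k_j})^{1/k_j}=\lambda_j\cdot k_j\cdot 2^{1/k_j}/\epsilon \le \lambda_j\cdot 2n/\epsilon=\lambda_j\gamma$. Theorem \ref{t_potential} then yields $\Delta_{j+1}\sim_{\mathbb Q}\mu_{j+1}H$ and $a_{j+1}\in(0,1]$ and a non kawamata log terminal centre $V_{j+1}$ through $x$ with $\dim V_{j+1}<k_j$. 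Crucially, the new coefficient $\lambda_{j+1}$ of $H$ in the accumulated boundary is at most $\lambda_j(1+\gamma)$ (the "$1$" accounting for keeping the old divisors, the "$\gamma$" for the newly added one), and the total coefficient $\sum\lambda_i$ stays controlled. After at most $n-1$ steps the centre becomes a point, and tracking the accumulated coefficient gives a boundary $\Delta_0'\sim_{\mathbb Q}\lambda H$ with $\lambda\le 2\gamma_0(1+\gamma)^{n-1}=m$ such that $(X,\Delta+\Delta_0')$ is log canonical at $x$ with $\{x\}$ a non kawamata log terminal centre and non klt at $y$. Since $\Delta\ge 0$, $(X,\Delta_0')$ already has this property, which is exactly the definition of $mH$ being potentially birational.

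The main obstacle, and the step requiring the most care, is the bookkeeping on the coefficients: one has to verify that replacing the ample divisor $H$ in Theorem \ref{t_potential} by the multiple $\mu_{j+1}H$ at each stage, and keeping \emph{all} previously constructed divisors $\Delta_1,\dots,\Delta_j$ in the boundary (each of which may need to be rescaled by a factor up to $(1+\gamma)$ when Theorem \ref{t_potential}'s proof adjusts the $a_i$), produces the clean recursion $\lambda_{j+1}\le \lambda_j(1+\gamma)$ with $\lambda_0\le 2\gamma_0$ — giving $\lambda\le 2\gamma_0(1+\gamma)^{n-1}$. I would also need to check the very mild point that at each stage $\lambda_j\ge 1$ so the hypothesis of the theorem (which requires $\lambda\ge 1$) continues to apply, and that "very general $x$" survives finitely many steps. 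The geometric content — dropping the dimension — is entirely supplied by Theorem \ref{t_potential}; everything else is careful accounting of the $\mathbb Q$-coefficients, so I expect the proof to be short once the recursion is set up correctly.
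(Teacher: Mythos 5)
Your proposal follows the paper's proof in all essentials: a descending induction on the dimension of the minimal non kawamata log terminal centre through the very general point $x$, in which \eqref{t_potential} is applied with the ample divisor $\lambda\gamma H$ (so that the volume on the centre exceeds $2k^k$, since $2^{1/k}k\le 2n$), and the coefficient of $H$ obeys the recursion $\lambda'\le(1+\gamma)\lambda$ starting from $2\gamma_0$; this is precisely the paper's statement $(\flat)_k$. Two points of your set-up need tightening, exactly as the paper does. At the start, $\vol(X,\gamma_0H)>n^n$ only lets you force high multiplicity at one point; the paper uses $\vol(X,2\gamma_0H)>2n^n$ to produce $\Phi\sim_{\mathbb{Q}}2\gamma_0H$ which fails to be log canonical at \emph{both} $x$ and $y$, then scales by the log canonical threshold at $x$ and, possibly switching $x$ and $y$, gets a pair that is log canonical at $x$ with a centre through $x$ and is \emph{not klt at $y$} --- ``not klt somewhere'' is not enough, since \eqref{d_potential} requires failure of klt at the second very general point; this is also the source of the factor $2$ in $m=2\gamma_0(1+\gamma)^{n-1}$ and of your $\lambda_0\le 2\gamma_0$. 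Likewise the condition ``not klt at $y$'' must be recorded and carried through every application of \eqref{t_potential} (which may switch $x$ and $y$), as in $(\flat)_k$; your iteration only tracks the centre through $x$. The loose ends you flag are handled as you expect: $\lambda\ge 1$ is preserved by adding $\max(0,1-\lambda')B$ for a general $B\sim_{\mathbb{Q}}H$ avoiding $x$ and $y$, and the paper keeps the strict bound $\lambda<2\gamma_0(1+\gamma)^{n-1-k}$ so that at the end $\lambda<m$, as the $(1-\epsilon)$ in \eqref{d_potential} requires.

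One sentence of yours is actually wrong in direction: ``Since $\Delta\ge 0$, $(X,\Delta_0')$ already has this property.'' Removing the effective divisor $\Delta$ from the boundary makes the pair \emph{milder}, so log canonicity at $x$ survives, but ``not klt at $y$'' and ``$\{x\}$ is a non klt centre'' may be lost. The paper never makes this reduction: its conclusion is $(\flat)_0$, a statement about the pair $(X,\Delta+\Delta_0)$ with the klt boundary $\Delta$ retained, and that is the form in which the result is fed into \eqref{t_tsuji}. So simply delete that sentence; with the corrections above your argument coincides with the paper's.
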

\begin{proof} Let $x$ and $y$ be two very general points of $X$.  Possibly switching $x$
and $y$, we will prove by descending induction on $k$ that there is a $\mathbb{Q}$-divisor
$\Delta_0\geq 0$ such that:

\noindent ($\flat$)$_k$ $\Delta_0\sim_{\mathbb{Q}} \lambda H$, for some
$1\leq \lambda<2\gamma_0(1+\gamma)^{n-1-k}$, where $(X,\Delta+\Delta_0)$ is log canonical
at $x$, not kawamata log terminal at $y$ and there is a non kawamata log terminal centre
$V$ of dimension at most $k$ containing $x$.

As
$$
\vol(X,2\gamma_0 H)> 2n^n,
$$
we may find $0\leq \Phi \sim_{\mathbb{Q}} 2\gamma_0 H$, such that
$(X,\Delta+\Phi)$ is not log canonical at either $x$ or $y$.  If
$$
\beta=\sup \{\, \alpha \,|\, \text{$K_X+\Delta+\alpha\Phi$ is log canonical at $x$} \,\},
$$
is the log canonical threshold, then $\beta<1$.  Possibly switching $x$ and $y$, we may
assume that $(X,\Delta+\beta\Phi)$ is not kawamata log terminal at $y$.  Clearly
$\Delta_0=\beta\Phi$ satisfies ($\flat$)$_{n-1}$, so this is the start of the induction.

Now suppose that we may find a $\mathbb{Q}$-divisor $\Delta_0$ satisfying ($\flat$)$_k$.
We may assume that $V$ is the minimal non kawamata log terminal centre containing $x$ and
that $V$ has dimension $k$.  By assumption,
$$
\vol(V,\lambda\gamma H|_V)>2k^k,
$$
so that by \eqref{t_potential}, possibly switching $x$ and $y$, we may find
$\Delta_1\sim_{\mathbb{Q}} \mu H$, where $\mu<\lambda\gamma$ and constants $0<a_i\leq 1$
such that $(X,\Delta+a_0\Delta_0+a_1\Delta_1)$ is log canonical at $x$, not kawamata log
terminal at $y$ and there is a non kawamata log terminal centre $V'$ containing $x$, whose
dimension is less than $k$.  As
$$
a_0\Delta_0+a_1\Delta_1 \sim_{\mathbb{Q}} (a_0\lambda+a_1\mu)H,
$$
and 
$$
\lambda'=a_0\lambda+a_1\mu\leq (1+\gamma)\lambda<2\gamma_0(1+\gamma)^{n-1-(k-1)},
$$
$a_0\Delta_0+a_1\Delta_1+\max(0,1-\lambda')B$ satisfies ($\flat$)$_{k-1}$, where the
support of $B\sim_{\mathbb{Q}} H$ does not contain either $x$ or $y$ (we only need to add
on $B$ in the unlikely event that $\lambda'<1$).  This completes the induction and the
proof.
\end{proof}

\subsection{Bounded pairs}

\begin{definition}\label{d_birationally-bounded} We say that a set $\mathfrak{X}$
of varieties is \textbf{birationally bounded} if there is a projective morphism $\map
Z.T.$, where $T$ is of finite type, such that for every $X\in \mathfrak{X}$, there is a
closed point $t\in T$ and a birational map $f\colon\rmap Z_t.X.$.

We say that a set $\mathfrak{D}$ of log pairs is \textbf{log birationally bounded} if
there is a log pair $(Z,B)$, where the coefficients of $B$ are all one, and a projective
morphism $\map Z.T.$, where $T$ is of finite type, such that for every $(X,\Delta)\in
\mathfrak{D}$, there is a closed point $t\in T$ and a birational map $f\colon\rmap Z_t.X.$
such that the support of $B_t$ contains the support of the strict transform of $\Delta$
and any $f$-exceptional divisor.
\end{definition}

\begin{lemma}\label{l_birationally-bounded} Fix a positive integer $n$.
\begin{enumerate}
\item Let $\mathfrak{X}$ and $\mathfrak{Y}$ be two sets of varieties, such that if $X\in
\mathfrak{X}$, then we may find $Y\in \mathfrak{Y}$ birational to $X$.  If $\mathfrak{Y}$
is birationally bounded, then $\mathfrak{X}$ is birationally bounded.
\item Let $\mathfrak{X}$ be a set of varieties of dimension $n$.  If there is a constant
$V$ such that for every $X\in \mathfrak{X}$, we may find a Weil divisor $D$ such that
$\phi_D$ is birational and the volume of $D$ is at most $V$, then $\mathfrak{X}$
is birationally bounded.
\item Let $\mathfrak{D}$ and $\mathfrak{G}$ be two sets of log pairs, such that if
$(X,\Delta)\in \mathfrak{D}$, then we may find $(Y,\Gamma)\in \mathfrak{G}$, and a
birational map $f\colon\rmap Y.X.$, where the support of $\Gamma$ contains the support of the
strict transform of $\Delta$ and any $f$-exceptional divisor.  If $\mathfrak{G}$ is log
birationally bounded, then $\mathfrak{D}$ is log birationally bounded.
\item Let $\mathfrak{D}$ be a set of log pairs of dimension $n$.  If there are constants
$V_1$ and $V_2$ such that for every $(X,\Delta)\in \mathfrak{D}$ we may find a Weil
divisor $D$ such that $\phi_D\colon\rmap X.Y.$ is birational, the volume of $D$ is at most
$V_1$, and if $G$ denotes the sum over the components of the strict transform of $\Delta$
and the $\phi^{-1}$-exceptional divisors, then $G\cdot H^{n-1}\leq V_2$, where $H$ is the
very ample divisor on $Y$ determined by $\phi_D$, then $\mathfrak{D}$ is log birationally
bounded.
\item If the set $\mathfrak{D}$ of log pairs is log birationally bounded, then
$$
\mathfrak{X}=\{\, X \,|\, (X,\Delta)\in \mathfrak{D} \,\}
$$
is birationally bounded.
\end{enumerate}
\end{lemma}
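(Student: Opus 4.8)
The plan is to prove the five assertions in the order $(1)$, $(3)$, $(5)$, $(2)$, $(4)$: the first three are formal consequences of the definition of (log) birational boundedness, obtained by composing birational maps while keeping track of supports, whereas $(2)$ and $(4)$ carry the geometric content and are deduced from $(1)$ and $(3)$ respectively, together with \eqref{l_standard}.

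For $(1)$: given $X\in\mathfrak X$, choose $Y\in\mathfrak Y$ birational to $X$; since $\mathfrak Y$ is birationally bounded there is a projective morphism $\map Z.T.$, a point $t\in T$, and a birational map $\rmap Z_t.Y.$; composing with $\rmap Y.X.$ shows the same family $\map Z.T.$ witnesses that $\mathfrak X$ is birationally bounded. For $(5)$: if $(Z,B)$, $\map Z.T.$ witnesses log birational boundedness of $\mathfrak D$, then forgetting $B$ shows $\map Z.T.$ witnesses birational boundedness of $\mathfrak X$. For $(3)$: let $(Z,B)$, $\map Z.T.$ witness log birational boundedness of $\mathfrak G$; given $(X,\Delta)\in\mathfrak D$, pick $(Y,\Gamma)\in\mathfrak G$ and a birational $f\colon\rmap Y.X.$ as in the hypothesis, then $t$ and $g\colon\rmap Z_t.Y.$ with $\Supp B_t$ containing the strict transform of $\Gamma$ and every $g$-exceptional divisor. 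Put $h=f\circ g\colon\rmap Z_t.X.$. The strict transform under $h$ of $\Delta$ is contained in the strict transform under $g$ of $\Supp\Gamma$ (because the strict transform under $f$ of $\Delta$ is supported on $\Gamma$), hence in $\Supp B_t$; and any $h$-exceptional prime divisor on $Z_t$ is either $g$-exceptional, or the strict transform under $g$ of an $f$-exceptional prime divisor on $Y$, which is again supported on $\Gamma$, hence on $B_t$. So $\mathfrak D$ is log birationally bounded via the same $(Z,B)$, $\map Z.T.$.

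For $(2)$ the key is to convert the bound on the volume into a bound on the dimension of an ambient projective space. Given $X\in\mathfrak X$ with Weil divisor $D$, the map $\phi_D$ realises $X$ as birational to its image $Z\subset\pr N.$, where $N=h^0(X,\ring X.(D))-1$; we may assume $Z$ is non-degenerate (otherwise replace $\pr N.$ by the linear span of $Z$). By \eqref{l_standard}, $\deg Z\le\vol(X,D)\le V$, and by the classical inequality $\deg Z\ge N-n+1$ for a non-degenerate irreducible $n$-fold in $\pr N.$, we get $N\le V+n-1$. Thus every such $Z$ is an irreducible subvariety of some $\pr N.$ with $N$ and $\deg Z$ bounded in terms of $n$ and $V$, so the $Z$'s lie in a bounded family, e.g. a finite union of Chow varieties; this family is birationally bounded, and every $X\in\mathfrak X$ is birational to a member of it, so $(1)$ applies. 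For $(4)$ we run the same argument and additionally bound the boundary: with $\phi_D\colon\rmap X.Y.$ and $H$ the hyperplane class on $Y\subset\pr N.$, we have $\deg Y=H^n\le V_1$ and $N\le V_1+n-1$, so $Y$ varies in a bounded family; the reduced divisor $G$ (the strict transform of $\Delta$ together with the $\phi_D^{-1}$-exceptional divisors) satisfies $G\cdot H^{n-1}\le V_2$, so inside the bounded family of $Y$'s the pairs $(Y,G)$ also form a bounded family. Taking $(Z,B)\to T$ to be a compactification of the universal such pair with $B$ the reduced universal boundary, and $f=\phi_D^{-1}$, the conditions of \eqref{d_birationally-bounded} hold by construction, so $\mathfrak D$ is log birationally bounded.

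The only non-formal ingredient, and the step I expect to be the crux, is the passage in $(2)$ and $(4)$ from ``some birational $\phi_D$ has bounded volume'' to ``the image sits in a projective space of bounded dimension'': this is exactly the classical lower bound $\deg Z\ge\cod Z+1$ for non-degenerate varieties (equivalently $h^0(D)\le\vol(X,D)+n$). Granting that, the remaining work is the standard finiteness of Chow (or Hilbert) schemes of bounded degree and dimension, together with the bookkeeping with strict transforms and exceptional divisors carried out in $(1)$ and $(3)$.
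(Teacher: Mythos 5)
Your proposal is correct and takes essentially the same route as the paper: (1), (3) and (5) are treated as formal, and (2), (4) are proved exactly as in the paper by combining \eqref{l_standard} with the classical minimal-degree inequality to bound the ambient projective space, and then invoking boundedness of the Chow variety for the image (and, in (4), for the boundary $G$ with $G\cdot H^{n-1}\leq V_2$). The only cosmetic difference is that the paper makes the bounding family into an honest log pair $(Z,B)$ by stratifying $T$ and taking a log resolution of the generic fibres, a step you gloss over when taking the ``universal pair''.
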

\begin{proof} (1), (3) and (5) are clear.

We prove (4).  Suppose that $Y\subset \pr s.$ is a closed subvariety of dimension $n$ and
degree at most $V_1$.  Then by the classification of minimal degree subvarieties of
projective space, we may assume $s\leq V_1+1-n$.  By boundedness of the Chow variety,
there are flat morphisms $\map Z.T.$ and $\map B.T.$ such that if $Y\subset \pr s.$ has
dimension $n$ (respectively $n-1$) and degree at most $V_1$ (respectively $V_2$), then $Y$
is isomorphic to the fibre $Z_t$ (respectively $B_t$) over a closed point $t\in T$.
Passing to a stratification of $T$ and a log resolution of the generic fibres of $\map
Z.T.$, we may assume that the fibres of $\map Z.T.$ are smooth.  In particular, $(Z,B)$ is
a log pair.

Now suppose that $(X,\Delta)\in \mathfrak{D}$.  By assumption there is a divisor $D$ such
that $\phi_D\colon\rmap X.Y.$ is birational.  The degree of the image is at most the
volume of $D$, that is, at most $V_1$.  So there is a closed point $t\in T$ such that $Y$
is isomorphic to $Z_t$.  By assumption $G\cdot H^{n-1}\leq V_2$ so that we may assume that
$G$ corresponds to $B_t$.  But then $\mathfrak{D}$ is log birationally bounded.  This is
(4).

The proof of (2) is similar to and easier than the proof of (4).
\end{proof}

\makeatletter
\renewcommand{\thetheorem}{\thesection.\arabic{theorem}}
\@addtoreset{theorem}{section}
\makeatother

\section{Birationally bounded pairs}
\label{s_log}

\S \ref{s_log} is devoted to a proof of:
\begin{theorem}\label{t_boundingimage} Fix a positive integer $n$ and two constants $A$
and $\delta>0$.

Then the set of log pairs $(X,\Delta)$ satisfying
\begin{enumerate}
\item $X$ is projective of dimension $n$,
\item $(X,\Delta)$ is log canonical,
\item the coefficients of $\Delta$ are at least $\delta$, 
\item there is a positive integer $m$ such that $\vol(X,m(K_X+\Delta))\leq A$, and
\item $\phi_{K_X+m(K_X+\Delta)}$ is birational, 
\end{enumerate}
is log birationally bounded.
\end{theorem}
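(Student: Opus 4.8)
The strategy is to verify the hypotheses of \eqref{l_birationally-bounded}(4) for the Weil divisor $D=\rfdown K_X+m(K_X+\Delta).$, where $m$ is the positive integer supplied by (4). I would begin with two routine reductions. Since $\phi_{K_X+m(K_X+\Delta)}$ is birational, $K_X+m(K_X+\Delta)$ is big, hence so is $(m+1)(K_X+\Delta)\geq K_X+m(K_X+\Delta)$, and hence $K_X+\Delta$ is big; as $(X,\Delta)$ is log canonical, \cite{BCHM06} produces the log canonical model, so I may assume $K_X+\Delta$ is ample. Replacing $(X,\Delta)$ by a small $\mathbb{Q}$-factorialisation and descending via \eqref{l_birationally-bounded}(3), I may also assume $X$ is $\mathbb{Q}$-factorial; neither operation disturbs (1)--(5) or changes $\vol(X,m(K_X+\Delta))$, and now every Weil divisor on $X$ is $\mathbb{Q}$-Cartier.

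By (5), $\phi_D\colon\rmap X.Y.$ is birational onto its image $Y$. Since $\Delta\geq 0$ and $K_X+\Delta$ is $\mathbb{Q}$-Cartier,
$$
D\;\leq\;K_X+m(K_X+\Delta)\;\leq\;(m+1)(K_X+\Delta)=:L ,
$$
and, as $L$ is $\mathbb{Q}$-Cartier and ample,
$$
\vol(X,D)\leq\vol(X,L)=L^n=\Bigl(\tfrac{m+1}{m}\Bigr)^n\vol\bigl(X,m(K_X+\Delta)\bigr)\leq 2^nA=:V_1 .
$$
By \eqref{l_standard}, $\deg Y\leq V_1$; by the classification of varieties of minimal degree, after composing $\phi_D$ with a linear projection I may assume $Y\subset\pr N.$ with $N\leq V_1+n-1$, so that $Y$ lies in a bounded family. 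Let $H$ be the hyperplane class on $Y$.

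It remains to bound $G\cdot H^{n-1}$, where $G$ is the reduced sum of the strict transform of $\Delta$ and of the $\phi_D^{-1}$-exceptional divisors on $Y$. Fix a resolution of indeterminacy $p\colon\map W.X.$, $q\colon\map W.Y.$ with $q=\phi_D\circ p$, $W$ smooth, and write $p^*D\sim q^*H+F$, where $q^*H$ is the class of the moving part and $F\geq 0$ the fixed part of $p^*|D|$. Since $D=\rfdown L-\Delta.$ we have $\Delta\leq L-D$, so $p^*\Delta\leq p^*(L-D)$; pushing forward by $q$, using the projection formula, and noting that $q_*(p^*\Delta)\geq\Delta_Y$ (where $\Delta_Y$ is the strict transform of $\Delta$ on $Y$) while $F\geq0$ and $q^*H$ is nef, we get
$$
\Delta_Y\cdot H^{n-1}\leq q_*(p^*\Delta)\cdot H^{n-1}\leq q_*\bigl(p^*(L-D)\bigr)\cdot H^{n-1}\leq p^*L\cdot(q^*H)^{n-1}\leq(p^*L)^n=L^n\leq 2^nA ,
$$
where the fourth inequality follows by a standard positivity argument from the facts that $p^*L$ and $q^*H$ are nef while $p^*L-q^*H\sim p^*(L-D)+F$ is pseudo-effective. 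Since the coefficients of $\Delta$ are at least $\delta$, $\Supp\Delta_Y\leq\tfrac1\delta\Delta_Y$, whence $(\Supp\Delta_Y)\cdot H^{n-1}\leq 2^nA/\delta$. It then remains only to bound $\sum_E E\cdot(q^*H)^{n-1}$, the sum running over the $p$-exceptional prime divisors $E$ that are not contracted by $q$ (whose images on $Y$ are exactly the $\phi_D^{-1}$-exceptional divisors). Granting such a bound $V_2'=V_2'(n,A,\delta)$, we set $V_2:=2^nA/\delta+V_2'$ and conclude by \eqref{l_birationally-bounded}(4).

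I expect this last estimate, for the $\phi_D^{-1}$-exceptional divisors, to be the main obstacle. The bound for $\Delta_Y$ worked because $p^*\Delta$ is dominated by the effective class $p^*(L-D)+F$; there is no such domination for the reduced exceptional divisor, and since $p^*D$ (equivalently $q^*H$) is only pseudo-effective below the nef class $p^*L$ while the exceptional divisors are themselves far from nef, the naive comparisons of intersection numbers all break down. Resolving this should use the special shape $D=\rfdown L-\Delta.$: it forces the base locus of $|D|$, hence $F$ and the relevant $p$-exceptional divisors, to lie over the locus where $L-D=\Delta+\{L-\Delta\}$ has large multiplicity, so that the hypothesis that the coefficients of $\Delta$ are bounded below by $\delta$ re-enters. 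One then makes a careful intersection computation on general surface and curve sections of $Y$, where the relevant restricted classes do meet non-negatively; this bookkeeping is the heart of the proof, the reductions and the volume bound being routine.
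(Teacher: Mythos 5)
Your reductions and your bound on $\vol(X,D)$, on the degree of the image, and on $\Delta_Y\cdot H^{n-1}$ (via $\Delta\leq L-D$, the projection formula, and the nef/pseudo-effective comparison) are fine and run parallel to the first half of the paper's argument. But the proof is not complete: the bound $V_2'$ on $\sum_E E\cdot (q^*H)^{n-1}$ over the $p$-exceptional divisors not contracted by $q$ — equivalently, on the degree of the $\phi_D^{-1}$-exceptional divisors — is exactly the heart of the theorem, and you only "grant" it. Your domination trick genuinely cannot reach it: an exceptional divisor of a resolution is not bounded above by any effective class proportional to $K_X+\Delta$, and the heuristic you offer (that the base locus of $|D|$ sits over points of large multiplicity of $\Delta$, plus an intersection computation on surface and curve sections) is not an argument; nothing in hypotheses (1)--(5) controls how many divisors, or of what degree on $Y$, can be extracted by $\phi_D^{-1}$ without a new mechanism.

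The paper closes precisely this gap in two moves you do not have. First, instead of passing to the log canonical model it passes to a log resolution and replaces $\Delta$ by the strict transform plus \emph{all} exceptional divisors with coefficient one; by (3) of \eqref{l_birationally-bounded} this is harmless, it preserves (1)--(5), and it makes $\phi=\phi_{K_X+m(K_X+\Delta)}$ a birational \emph{morphism}, so there are no $\phi^{-1}$-exceptional divisors left — but the difficulty reappears as the need to bound the degree of the coefficient-one components now inside $\Delta$. Second, this is handled by the key Lemma \eqref{l_compare-volumes}: for $M$ base point free with $\phi_M$ birational and $H=2(2n+1)M$, any reduced divisor $D_0$ with no component contracted satisfies $D_0\cdot H^{n-1}\leq 2^n\vol(X,K_X+D_0+H)$. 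Its proof is cohomological, not intersection-theoretic: vanishing gives surjectivity of restriction to $D_0$, sections of $K_X+D_0+(2n+1)M$ not vanishing on any component of $D_0$ let one inject $H^0(D_0,\ring D_0.(A_m))$ into the cokernel of $H^0(X,\ring X.(2(m-1)A_1))\to H^0(X,\ring X.(2mA_1))$, and comparing leading coefficients of the two Hilbert-type polynomials yields the bound. One then takes $D_0\leq\alpha(B+\Delta)$ with $B\in|\rfdown K_X+m(K_X+\Delta).|$ and $\alpha=\max(1/\delta,2(2n+1))$, so the right-hand side is bounded by a constant times $A$, and (4) of \eqref{l_birationally-bounded} applies. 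Without an argument of this kind (or some substitute for \eqref{l_compare-volumes}), your proposal does not prove the theorem.
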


The key result is:
\begin{lemma}\label{l_compare-volumes} Let $X$ be a normal projective variety of dimension
$n$ and let $M$ be a base point free Cartier divisor such that $\phi_M$ is birational.
Let $H=2(2n+1)M$.

If $D$ is a sum of distinct prime divisors, then
$$
D\cdot H^{n-1}\leq 2^n\vol(X,K_X+D+H).
$$
\end{lemma}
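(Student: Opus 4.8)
The plan is to compare the intersection number $D \cdot H^{n-1}$ with a volume by first reducing to the case where $D$ is itself a very ample divisor coming from $M$, and then using asymptotic Riemann--Roch together with the freedom in $D$. Concretely, since $\phi_M$ is birational and $M$ is base point free, a general member $M_1 \in |M|$ is irreducible, reduced and $\phi_M$ restricts to a birational map on it; iterating, a general complete intersection $M_1 \cap \dots \cap M_{n-1}$ of members of $|M|$ is a reduced curve. First I would bound $D \cdot M^{n-1}$ by the number of points in which $D$ meets such a general complete intersection curve; because $D$ is a sum of \emph{distinct} prime divisors and the $M_i$ are general, each such intersection point is a smooth point of $X$ lying on exactly one component of $D$, and moreover it is \emph{not} a base point of any relevant linear system. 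The point of the factor $2(2n+1)$ in $H = 2(2n+1)M$ is exactly to have room to apply \eqref{l_potential}: on the curve $C = M_1 \cap \dots \cap M_{n-1}$, or better directly on $X$, we want to produce sections of $K_X + D + H$ that separate the points of $D \cap C$ from general points, so that $h^0(X, K_X + D + H)$ is at least (a constant times) $D \cdot M^{n-1}$.

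The key mechanism is the following: by \eqref{l_potential}(2), since $\phi_M$ is birational and $\dim X = n$, the divisor $(2n+1)M$ is potentially birational, hence $K_X + (2n+1)M$ is big and $\phi_{K_X+(2n+1)M}$ is birational; more usefully, $H = 2(2n+1)M$ gives us two independent copies of a potentially birational divisor, which by the Nadel-vanishing argument inside the proof of \eqref{l_potential}(1) lets us cut out, via multiplier ideals, any prescribed general point while keeping a section nonzero at another. I would run this argument at the points of $D \cap C$: using one copy of $(2n+1)M$ to build a klt pair whose non-klt locus is a chosen point $p \in D\cap C$, and the divisor $D$ plus the second copy to stay effective, Nadel vanishing for $K_X + D + H$ twisted by the appropriate multiplier ideal produces, for each $p$, a section of $\ring X.(K_X+D+H)$ vanishing along $D \cap C$ except possibly at $p$ and nonzero at $p$. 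These sections are linearly independent, so $h^0(X,K_X+D+H) \geq \#(D\cap C) = D\cdot M^{n-1}$. Chasing the constants: $H^{n-1} = (2(2n+1))^{n-1} M^{n-1}$, and feeding $h^0(X, m(K_X+D+H)) \gtrsim m^{n-1}\,\#(D\cap C)$ for large $m$ into the definition of $\vol$ (together with the fact that $K_X+D+H$ is big, so the volume is a genuine limit computed by $n!h^0/m^n$ — here one also uses $H$ ample to pick up the missing power of $m$) yields a bound of the shape $D\cdot H^{n-1} \leq c(n)\,\vol(X,K_X+D+H)$; the claim is that careful bookkeeping gives $c(n) = 2^n$.

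The main obstacle I anticipate is the \emph{uniform separation of points of $D$ on the complete intersection curve}: one must ensure that the multiplier-ideal construction can isolate each point $p \in D \cap C$ simultaneously with effective auxiliary divisors whose coefficients do not blow up, and that the bound is genuinely $2^n$ rather than merely polynomial in $n$. The cleanest route is probably to localize at $C$: restrict $K_X+D+H$ to the reduced curve $C$, observe $\deg (K_X+D+H)|_C \geq \deg D|_C = D\cdot M^{n-1}$ contributes to $h^0$ via the surjectivity (up to $H^1$, killed by ampleness of a large multiple of $H$) of $H^0(X, m(K_X+D+H)) \to H^0(C, m(K_X+D+H)|_C)$, and count there. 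Balancing the two ample factors — one providing birationality/separation through \eqref{l_potential}, the other providing the surjectivity/vanishing — against the combinatorial factor $2^n$ coming from the $n-1$ general hyperplane sections plus one restriction is exactly where the stated constant should emerge, and getting it to be no worse than $2^n$ is the delicate accounting I expect to spend the most care on.
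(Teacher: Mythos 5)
There is a genuine gap, and it sits at the heart of the estimate: your construction produces too few sections to say anything about the volume. Your plan yields $h^0(X,\mathcal{O}_X(K_X+D+H))\geq \#(D\cap C)=D\cdot M^{n-1}$, or, in the variant where you work with $m(K_X+D+H)$ and restrict to the complete intersection curve $C$, a lower bound that grows at most like $m^{n-1}$ (respectively, linearly in $m$ on $C$). But $\vol(X,K_X+D+H)=\limsup n!\,h^0(X,m(K_X+D+H))/m^n$, so any bound of order $m^{n-1}$ contributes nothing: you need sections growing like $c\,m^n$ with $c$ comparable to $D\cdot H^{n-1}/n!$. The remark that ``$H$ ample picks up the missing power of $m$'' does not repair this — $H=2(2n+1)M$ is only big and nef (base point free with $\phi_M$ birational), not ample in general, and in any case no mechanism is offered for converting the separation of the finitely many points of $D\cap C$ into $m^n$-growth with the right leading coefficient. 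That conversion is precisely the content of the lemma, and it is also where the constant $2^n$ must come from; asserting that ``careful bookkeeping'' yields $2^n$ leaves the essential step unproved. (A secondary issue: your Nadel-vanishing separation is run at the points of $D\cap C$, which are not very general points and lie on $D$ itself, so the tie-breaking and isolation of non-klt centres in \eqref{l_potential} do not apply as stated; but this is minor compared to the growth-order problem.)

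The paper's proof works with the divisor $D$ itself rather than a curve section, and compares two Hilbert-type polynomials. After discarding $\phi_M$-exceptional components, passing to a log resolution and separating the components of $D$, one writes $M\sim_{\mathbb{Q}}A+B$ with $A$ ample and uses Kawamata--Viehweg vanishing to get $H^i(X,\mathcal{O}_X(K_X+E+pM))=0$ for $i>0$ and all $0\leq E\leq D$. Setting $A_m=K_X+D+mH$, this gives that $P(m)=h^0(D,\mathcal{O}_D(A_m))$ is a polynomial of degree $n-1$ with leading coefficient $D\cdot H^{n-1}/(n-1)!$, and that the restrictions $H^0(X,\mathcal{O}_X(A_m))\to H^0(D,\mathcal{O}_D(A_m))$ are surjective. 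Multiplying by a fixed section $t\in H^0(X,\mathcal{O}_X(2mA_1-A_m))$, built from sections of $K_X+D+(2n+1)M$ and $(2n+1)M$ not vanishing on any component of $D$, one lifts these restricted sections into $H^0(X,\mathcal{O}_X(2mA_1))$, which by \cite{BCHM06} (here is where bigness of $K_X+D+H$, from \eqref{l_potential}(2), enters) is a polynomial $Q(m)$ of degree $n$ with leading coefficient $2^n\vol(X,K_X+D+H)/n!$ for divisible $m$. The diagram chase gives $P(m)\leq Q(m)-Q(m-1)$, and comparing leading coefficients yields exactly $D\cdot H^{n-1}\leq 2^n\vol(X,K_X+D+H)$ — the $2^n$ arising from the comparison with multiples of $2(K_X+D+H)$, needed so that $2A_1-D$ is effective. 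If you want to rescue your approach, you would have to replace the point count on $C$ by this kind of ``derivative'' comparison between $h^0$ along $D$ and successive differences of $h^0$ on $X$.
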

\begin{proof} Possibly discarding $\phi_M$-exceptional components of $D$, we may assume
that no component of $D$ is $\phi_M$-exceptional.  If $f\colon\map Y.X.$ is a log
resolution of the pair $(X,D)$ and $G$ is the strict transform of $D$, then
$$
D\cdot H^{n-1}=G\cdot (f^*H)^{n-1},
$$
and
$$
\vol(Y,K_Y+G+f^*H)\leq \vol(X,K_X+D+H).
$$
Replacing $(X,D)$ by $(Y,G)$ and $M$ by $f^*M$ we may assume that $(X,D)$ has simple
normal crossings, and possibly blowing up more, that the components of $D$ do not
intersect.

Since no component of $D$ is contracted, we may find an ample $\mathbb{Q}$-divisor $A$ and
a $\mathbb{Q}$-divisor $B\geq 0$, such that
$$
M\sim_{\mathbb{Q}} A+B,
$$
where $B$ and $D$ have no common components.  As $K_X+D+\delta B$ is divisorially log
terminal for any $\delta>0$ sufficiently small, it follows that
$$
H^i(X,\ring X.(K_X+E+pM))=0,
$$
for all positive integers $p$, $i>0$ and any integral Weil divisor $0\leq E\leq D$.
If we let
$$
A_m=K_X+D+mH,
$$
then
$$
H^i(D,\ring D.(A_m))=0,
$$
for all $i>0$ and $m>0$ and so there is a polynomial $P(m)$ of degree $n-1$, with
$$
P(m)=h^0(D,\ring D.(A_m)),
$$
for $m>0$.  (2) of \eqref{l_potential} implies that $A_1=K_X+D+H$ is big and so
\cite{BCHM06} implies that $K_X+D+H$ has a log canonical model.  In particular there is a
polynomial of $Q(m)$ of degree $n$, with
$$
Q(m)=h^0(X,\ring X.(2mA_1)),
$$
for any sufficiently divisible positive integer $m$.  Note that the leading coefficients
of $P(m)$ and $Q(m)$ are
$$
\frac{D\cdot H^{n-1}}{(n-1)!} \qquad \text{and} \qquad \frac{2^n\vol(X,K_X+D+H)}{n!}.
$$
If $D_i$ is a component of $D$, and $M_i=(D-D_i+(2n+1)M)|_{D_i}$, then
$$
\map {H^0(X,\ring X.(K_X+D+(2n+1)M))}.{H^0(D_i,\ring D_i.(K_{D_i}+M_i))}.,
$$
is surjective, and so the general section of $H^0(X,\ring X.(K_X+D+(2n+1)M))$ does not
vanish identically on any component of $D$.  Pick sections
$$
s\in H^0(X,\ring X.(K_X+D+(2n+1)M)) \quad \text{and} \quad l\in H^0(X,\ring X.((2n+1)M)),
$$
whose restrictions to each component of $D$ is non-zero.  Let
$$
t=s^{\otimes 2m-1}\otimes l\in H^0(X,\ring X.(2mA_1-A_m)).
$$
Consider the following commutative diagram
\begin{diagram}
0&\rTo & \ring X.(A_m-D)&\rTo& \ring X.(A_m) &\rTo &\ring D.(A_m) & \rTo& 0\\
 & & \dTo& &\dTo& &\dTo\\
0&\rTo & \ring X.(2mA_1-D)&\rTo& \ring X.(2mA_1)&\rTo & \ring D.(2mA_1) &\rTo& 0,
\end{diagram}
where the vertical morphisms are injections induced by multiplying by $t$.

Note that
$$
\map {H^0(X,\ring X.(A_m))}.{H^0(D,\ring D.(A_m))}.,
$$
is surjective.  Hence every element of $H^0(D,\ring D.(2mA_1))$ in the image of
$H^0(D,\ring D.(A_m))$ lifts to $H^0(X,\ring X.(2mA_1))$.  Therefore
$$
P(m)\leq h^0(X,\ring X.(2mA_1))-h^0(X,\ring X.(2mA_1-D)).
$$
Note that
$$
Q(m-1)=h^0(X,\ring X.(2(m-1)A_1))\leq h^0(X,\ring X.(2mA_1-D)),
$$
as $h^0(X,\ring X.(2K_X+D+2H))\neq 0$.  It follows that
$$
P(m)\leq Q(m)-Q(m-1).
$$
Now compare the leading coefficients of $P(m)$ and $Q(m)-Q(m-1)$.
\end{proof}

\begin{proof}[Proof of \eqref{t_boundingimage}] Let $(X,\Delta)$ be a log pair satisfying
the hypotheses of \eqref{t_boundingimage}.  If $\pi\colon\map Y.X.$ is a log resolution of
$(X,\Delta)$ which resolves the indeterminacy of 
$$
\phi=\phi_{K_X+m(K_X+\Delta)}\colon\rmap X.Z.,
$$
and $\Gamma$ is the strict transform of $\Delta$ plus the sum of the exceptional divisors,
then $(X,\Delta)$ is log birationally bounded if and only if $(Y,\Gamma)$ is log
birationally bounded, by (3) of \eqref{l_birationally-bounded}.  On the other hand,
$$
\vol(Y,m(K_Y+\Gamma))\leq \vol(X,m(K_X+\Delta))\leq A,
$$
and $\phi_{K_Y+m(K_Y+\Gamma)}$ is birational.  

Replacing $(X,\Delta)$ by $(Y,\Gamma)$, we may assume that
$$
\phi=\phi_{K_X+m(K_X+\Delta)}\colon\map X.Z.,
$$
is a birational morphism.  In particular, if we decompose $\rfdown K_X+m(K_X+\Delta).$
into its mobile part $M$ and its fixed part $E$, so that
$$
|K_X+m(K_X+\Delta)|=|M|+E,
$$
then $M$ is big and base point free.  Let $H$ be a divisor on $Z$ such that $M=\phi^*H$,
so that $H$ is very ample.

Note that
\begin{align*} 
\vol(X,K_X+m(K_X+\Delta))&\leq \vol(X,(m+1)(K_X+\Delta))\\
                         &\leq 2^nA.
\end{align*} 

On the other hand, let $G$ be the sum of the components of the strict transform of
$\Delta$ on $Z$.  Pick $B\in |\rfdown K_X+m(K_X+\Delta).|$.  Let
$$
\alpha=\max(\frac 1{\delta},2(2n+1)).
$$
If $D_0$ is the sum of the components of $\Delta$ and $B$ which are not contracted by
$\phi$, then 
$$
D_0\leq \alpha(B+\Delta).
$$
Note that there is a divisor $C\geq 0$ such that 
$$
\alpha(B+\Delta)+C\sim_{\mathbb{Q}} \alpha(m+1)(K_X+\Delta).
$$
As $\phi$ is a morphism and $M$ is base point free, \eqref{l_compare-volumes} implies that
\begin{align*}
G\cdot H^{n-1}&\leq D_0\cdot (2(2n+1)M)^{n-1} \\
             &\leq 2^n\vol(X,K_X+D_0+2(2n+1)M) \\
             &\leq 2^n\vol(X,K_X+\alpha(B+\Delta)+2(2n+1)(M+E+\Delta))\\
             &\leq 2^n\vol(X,K_X+\Delta+\alpha(m+1)(K_X+\Delta)+2(2n+1)(m+1)(K_X+\Delta))\\
             &\leq 2^n(1+2\alpha(m+1))^n\vol(X,K_X+\Delta)\\
             &\leq 2^{3n}\alpha^n\vol(X,(m+1)(K_X+\Delta))\\
             &\leq 2^{4n}\alpha^nA.
\end{align*}

Now apply (4) of \eqref{l_birationally-bounded}. \end{proof}

\section{Deformation invariance of log plurigenera}
\label{s_deformation}

\begin{proposition}\label{p_mmp} Let $(X,\Delta)$ be a $\mathbb{Q}$-factorial log
pair.  Suppose that $\map X.T.$ is a projective morphism to a smooth curve $T$ whose
fibres $(X_t,\Delta_t)$ are terminal, where every component of $\Delta$ dominates $T$.
Let $0\in T$ be a closed point.  Suppose that
\begin{itemize} 
\item either $\Delta$ or $K_X+\Delta$ is big over $T$, and 
\item no component of $\Delta_0$ belongs to the stable base locus of $K_{X_0}+\Delta_0$.
\end{itemize} 

Then we may find a log terminal model $f\colon\rmap X.Y.$ of $(X,\Delta)$ over $T$, such
that $f$ is an isomorphism at the generic point of every component of $\Delta_0$ and
$f_0\colon\rmap X_0.Y_0.$ is a weak log canonical model of $(X_0,\Delta_0)$.
\end{proposition}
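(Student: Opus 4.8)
The plan is to run the relative minimal model program and then restrict its output to the special fibre $X_0$. First I would shrink $T$ to an affine neighbourhood of $0$. Since the fibres are terminal, $(X,\Delta)$ is kawamata log terminal, and by inversion of adjunction $(X,X_0+\Delta)$ is purely log terminal near $X_0$; as $0$ is a smooth point of $T$ we have $X_0\sim_T 0$, so adjunction gives $(K_X+X_0+\Delta)|_{X_0}=K_{X_0}+\Delta_0$. The hypothesis forces $K_{X_0}+\Delta_0$ to be pseudo-effective, since otherwise its stable base locus is all of $X_0$ and in particular contains every component of $\Delta_0$; hence $K_X+\Delta$ is pseudo-effective over $T$. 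Since $\Delta$ or $K_X+\Delta$ is big over $T$, \cite{BCHM06} lets us run the $(K_X+\Delta)$-MMP over $T$ with scaling of an ample divisor; it terminates with a log terminal model $f\colon\rmap X.Y.$ of $(X,\Delta)$ over $T$, where I set $\Gamma=f_*\Delta$.

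Next I would observe that every flip and divisorial contraction of this MMP over $T$ contracts curves mapping to a point of $T$, so such a curve $C$ satisfies $X_0\cdot C=0$. Hence each step is also a step of the $(K_X+X_0+\Delta)$-MMP over $T$, the strict transform $Y_0:=f_*X_0$ is the fibre of $Y\to T$ over $0$, and $f$ is simultaneously a log terminal model of $(X,X_0+\Delta)$ over $T$. To see that $f$ is an isomorphism at the generic point of every component of $\Delta_0$, I would use that the $f$-exceptional divisors and the flipping loci all lie in the diminished base locus $\mathbf{B}_-((K_X+\Delta)/T)$; intersecting with $X_0$ and using $X_0\sim_T 0$ to lift sufficiently divisible relative pluricanonical sections to $X$, this locus is contained in the stable base locus of $K_{X_0}+\Delta_0$, which by hypothesis contains no component of $\Delta_0$.

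Finally I would check that $f_0\colon\rmap X_0.Y_0.$ is a weak log canonical model of $(X_0,\Delta_0)$. Writing $K_X+X_0+\Delta=f^*(K_Y+Y_0+\Gamma)+E$ with $E\geq 0$ and $f$-exceptional (negativity lemma), and restricting to $X_0$ by adjunction, I obtain $K_{X_0}+\Delta_0=f_0^*(K_{Y_0}+\Gamma_0)+E|_{X_0}$ with $E|_{X_0}\geq 0$ and $f_0$-exceptional, while $K_{Y_0}+\Gamma_0=(K_Y+Y_0+\Gamma)|_{Y_0}$ is nef, being the restriction to a fibre of a divisor nef over $T$. Combined with the fact that the $(K_X+X_0+\Delta)$-MMP over $T$ induces on the purely log terminal divisor $X_0$ a $(K_{X_0}+\Delta_0)$-MMP whose output is $f_0$, so that $f_0$ extracts no divisor, this gives that $f_0$ is a weak log canonical model. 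The main obstacle is exactly this compatibility of the relative MMP with restriction to $X_0$ — in particular verifying that a flip of a fibral curve restricts on $X_0$ to a genuine step of the $(K_{X_0}+\Delta_0)$-MMP rather than to a divisorial extraction — which, together with the use of the stable base locus hypothesis above, is where essentially all the work lies; the remainder is a formal application of \cite{BCHM06} and adjunction.
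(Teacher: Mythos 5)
Your overall strategy (run the relative $(K_X+\Delta)$-MMP and restrict to $X_0$) is the same as the paper's, but two of your central steps are asserted rather than proved, and they are exactly where the difficulty lies. First, the deduction ``$K_{X_0}+\Delta_0$ is pseudo-effective, hence $K_X+\Delta$ is pseudo-effective over $T$'' is a genuine gap: positivity does not pass from the special fibre to the family for free — that is an extension/invariance statement of precisely the kind this section of the paper is trying to establish, so it cannot be waved through (and quoting the invariance theorems of the paper would be circular, since they rest on this proposition). The paper handles it by first proving the proposition under the extra hypothesis that $K_X+\Delta$ is pseudo-effective over $T$, then taking the relative pseudo-effective threshold $\lambda$ with respect to an ample divisor $A$, applying the case already proved to $K_X+\Delta+\lambda A$, and using that the top self-intersection $G_t^n$ of the nef pushforward is constant in $t$ to show that $\lambda>0$ would make $K_X+\Delta+\lambda A$ big over $T$, a contradiction. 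Your use of $\mathbf{B}_-((K_X+\Delta)/T)$ suffers from the same problem in reverse: restricting sections only gives $\mathbf{B}(K_{X_0}+\Delta_0)\subseteq \mathbf{B}((K_X+\Delta)/T)\cap X_0$, whereas the inclusion you invoke — that the relative base locus meets $X_0$ inside the stable base locus of $K_{X_0}+\Delta_0$ — requires lifting pluricanonical sections from $X_0$ to $X$, which does not follow from $X_0\sim_T 0$; the obstruction is an $R^1$, i.e. exactly a Siu-type extension statement.

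Second, you explicitly flag that the crux is to show each step of the relative MMP induces a birational contraction on the special fibre (a flip must not restrict to a divisorial extraction on $X_0$), but you give no argument for it; in the paper this is the heart of the proof. The paper proves, by induction on the steps $g^k$, that (a) $g^k$ is an isomorphism at the generic point of every component of $\Delta^k_0$ — because a divisor $D\subset X^k_0$ inside the non-isomorphism locus is covered by contracted curves $C$ with $(K_{X^k_0}+\Delta^k_0)\cdot C=(K_{X^k}+\Delta^k)\cdot C<0$ (as $X^k_0\cdot C=0$), so $D$ lies in the stable base locus of the fibre, which by the inductive hypothesis contains no component of $\Delta^k_0$ — and (b) $g^k_0$ extracts no divisor, via a discrepancy comparison: for a prime divisor $G\subset X^{k+1}_0$ not contained in $\Delta^{k+1}_0$ there is a valuation with centre $G$ whose coefficient with respect to $(X^{k+1},X^{k+1}_0+\Delta^{k+1})$ is at least $0$; monotonicity along the $(K+X_0+\Delta)$-negative step makes the coefficient upstairs at least as large, while inversion of adjunction, \cite[1.4.5]{BCHM06}, together with (a) bounds it above by $0$, and the forced equality means $g^k$ is an isomorphism at the generic point of $G$. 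Without an argument of this type (or a substitute), your proposal establishes neither that $f$ is an isomorphism at the generic points of the components of $\Delta_0$ nor that $f_0$ is a birational contraction, so the conclusion that $f_0$ is a weak log canonical model of $(X_0,\Delta_0)$ is not reached.
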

\begin{proof} We first prove this result under the additional hypothesis that $K_X+\Delta$
is pseudo-effective over $T$.

Note that $X$ is smooth in codimension two, as the fibres $X_t$ of $\map X.T.$ are Cartier
and smooth in codimension two.  Hence
$$
(K_X+\Delta)|_{X_0}=(K_X+X_0+\Delta)|_{X_0}=K_{X_0}+\Delta_0.
$$
It follows from \cite[1.4.5]{BCHM06} that the coefficient of any valuation $\mu$ with
respect to $(X,X_0+\Delta)$ is at most zero, if the centre of $\mu$ is neither a component
of $\Delta$ nor a component of $\Delta_0$.

By \cite[1.2]{BCHM06}, we may run the $(K_X+\Delta)$-MMP over $T$, that is, we may find a
sequence $\ulist g.m-1.$ of divisorial contractions and flips $g^k\colon\rmap
X^k.X^{k+1}.$ starting at $X=X^1$ and ending with a log terminal model $Y=X^m$ for the
pair $(X,\Delta)$ over $T$.  Let $\Delta^k$ denote the pushforward of $\Delta$ under the
induced birational map $f^k\colon\rmap X.X^k.$.  We will prove by induction on $k$ that
\begin{enumerate}[(a)]
\item $g^k$ is an isomorphism at the generic point of any component of $\Delta_0^k$, and
\item $g^k_0\colon\rmap X_0^k.X_0^{k+1}.$ is a birational contraction.
\end{enumerate} 
Suppose that (a--b)$_{\leq k-1}$ hold.  Then $f^k_0\colon\rmap X_0.X_0^{k}.$ is a
birational contraction which does not contract any components of $\Delta_0$ and so
$(X^k_0,\Delta_0^k)$ is terminal.

(b)$_{\leq k-1}$ implies that no component of $\Delta_0^k$ is a component of the stable
base locus of $K_{X^k_0}+\Delta^k_0$.  Suppose that $g^k$ is not an isomorphism at the
generic point of a divisor $D$ contained in $X_0^k$.  Then $D$ is covered by curves $C$
such that
$$
(K_{X_0^k}+\Delta_0^k)\cdot C=(K_{X^k}+\Delta^k)\cdot C<0.
$$
It follows that $D$ is a component of the stable base locus of $K_{X^k_0}+\Delta^k_0$, so
that $D$ is not a component of $\Delta^k_0$.  Thus (a)$_k$ holds.

Suppose that $G\subset X^{k+1}_0$ is a prime divisor, which is not a component of
$\Delta^{k+1}_0$.  By the classification of log canonical surface singularities, we may
find a valuation $\nu$ with centre $G$ on $X^{k+1}$ whose coefficient $d$ with respect to
$(X^{k+1},X_0^{k+1}+\Delta^{k+1})$ is at least zero.  As $X_0^k$ is the pullback of a
divisor from $T$, $g^k$ is $(K_{X^k}+X_0^k+\Delta^k)$-negative and so the coefficient $c$
of $\nu$ with respect to $(X^k,X_0^k+\Delta^k)$ is at least $d$, with equality if and only
if $g^k$ is an isomorphism at the generic point of $G$.  By (a)$_k$ the centre of $\nu$ on
$X^k$ is not a component of $\Delta_0^k$.  It follows that $0\leq d\leq c\leq 0$, so that
$c=d=0$ and $g^k$ is an isomorphism at the generic point of $G$.  Hence $g^k_0$ is a
birational contraction, that is, (b)$_k$ holds.  This completes the induction and the
proof that (a--b)$_{\leq m-1}$ hold.

As $g^k_0$ is a birational contraction which is $(K_{X_0^k}+\Delta_0^k)$-negative, for
$k\leq m-1$, it follows that $f_0$ is a $(K_{X_0}+\Delta_0)$-negative birational
contraction.  But then $f_0$ is a weak log canonical model.

It remains to prove that $K_X+\Delta$ is pseudo-effective over $T$.  Pick a divisor $A$
which is ample over $T$ and let
$$
\lambda=\inf \{\, t\in \mathbb{R} \,|\, \text{$K_X+\Delta+tA$ is $\pi$-pseudo-effective} \,\},
$$
be the $\pi$-pseudo-effective threshold.  It is proved in \cite{BCHM06} that $\lambda$ is
rational.  By what we have already proved we may find a log terminal model $f\colon\rmap
X.Y.$ of $K_X+\Delta+\lambda A$ over $T$ such that $f_0\colon\rmap X_0.Y_0.$ is a weak log
canonical model of $K_{X_0}+\Delta_0+\lambda A_0$.  Let $G=f_*(K_X+\Delta+\lambda A)$.  If
$\lambda>0$ then $K_{X_0}+\Delta_0+\lambda A_0$ is big, so that $G_0$ is big and nef.  But
then $G_0^n>0$ is positive and so $G_t^n=G_0^n>0$ for every $t\in T$.  As $G$ is nef over
$T$, it is big over $T$, and so $K_X+\Delta+\lambda A$ is big over $T$, a contradiction.
It follows that $\lambda=0$ so that $K_X+\Delta$ is pseudo-effective over $T$.
\end{proof}

\begin{theorem}\label{t_inv} Let $\map X.T.$ be a flat projective morphism of quasi-projective 
varieties.  Let $(X,\Delta)$ be a log pair such that the fibres $(X_t,\Delta_t)$ are
$\mathbb{Q}$-factorial terminal, for every $t\in T$.  Assume that every component $R$ of
$\Delta$ dominates $T$ and that the fibres of the Stein factorisation of $\map R.T.$ are
irreducible.  Let $m>1$ be any integer such that $D=m(K_X+\Delta)$ is integral.

If either $K_X+\Delta$ or $\Delta$ is big over $T$ then $h^0(X_t,\ring X_t.(D_t))$ is
independent of $t\in T$.
\end{theorem}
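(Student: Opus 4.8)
The plan is to reduce to a one-parameter family and then exploit the relative minimal model program of \eqref{p_mmp} together with relative vanishing.

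\textbf{Reduction to a curve.} Since $t\mapsto h^0(X_t,\mathcal O_{X_t}(D_t))$ is upper semicontinuous, it is constant as soon as it is constant along the normalisation of an arbitrary irreducible curve $C\subset T$; so, base changing along $C\to T$, we may assume $T$ is a smooth affine curve with a distinguished point $0\in T$, and it is enough to prove that the restriction $H^0(X,\mathcal O_X(D))\to H^0(X_0,\mathcal O_{X_0}(D_0))$ is surjective. Indeed, lifts of a basis of $H^0(X_0,D_0)$ are $\mathcal O(T)$-linearly independent, so $\pi_*\mathcal O_X(D)$ has generic rank at least $h^0(X_0,D_0)$, whence $h^0(X_0,D_0)\le h^0(X_t,D_t)$ for general $t$; running the argument with $0$ an arbitrary point, semicontinuity then forces $h^0(X_t,D_t)$ to be constant. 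One must check the hypotheses of \eqref{t_inv} survive this base change: the assumption that the fibres of the Stein factorisation of each component $R$ of $\Delta$ over $T$ are irreducible is exactly what keeps the components of the pulled-back boundary dominating the new base with irreducible generic fibre, so that $\Delta$ restricts compatibly to the fibres; I would also arrange that $X$ itself is $\mathbb Q$-factorial, using that $X\to T$ is a family of $\mathbb Q$-factorial terminal varieties over a smooth curve.

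\textbf{The main case, via the relative MMP.} Suppose in addition that no component of $\Delta_0$ lies in the stable base locus of $K_{X_0}+\Delta_0$, so that \eqref{p_mmp} applies: it yields a log terminal model $f\colon\rmap X.Y.$ over $T$, an isomorphism at the generic point of every component of $\Delta_0$, with $f_0\colon\rmap X_0.Y_0.$ a weak log canonical model of $(X_0,\Delta_0)$; write $\Gamma=f_*\Delta$, so $K_Y+\Gamma$ is nef over $T$. Since $\Delta$ or $K_X+\Delta$ is big over $T$, we may write $K_Y+\Gamma\sim_{\mathbb Q,T}K_Y+\Gamma'+A$ with $(Y,\Gamma')$ kawamata log terminal and $A$ ample over $T$, so by the relative base point free theorem $K_Y+\Gamma$ is semiample over $T$. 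As $m>1$, the $\mathbb Q$-divisor $m(K_Y+\Gamma)-(K_Y+\Gamma')\sim_{\mathbb Q,T}(m-1)(K_Y+\Gamma)+A$ is nef and big over $T$, and likewise on every fibre, so relative Kawamata--Viehweg vanishing gives $R^i\pi_{Y*}\mathcal O_Y(m(K_Y+\Gamma))=0$ and $H^i(Y_t,\mathcal O_{Y_t}(m(K_{Y_t}+\Gamma_t)))=0$ for all $i>0$. Hence $\pi_{Y*}\mathcal O_Y(m(K_Y+\Gamma))$ is locally free with formation commuting with base change, so $t\mapsto h^0(Y_t,m(K_{Y_t}+\Gamma_t))$ is locally constant. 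Finally, a $(K+\Delta)$-nonpositive birational contraction does not affect $h^0$ of a multiple of $K+\Delta$, and the steps of a relative MMP restrict to $(K_{X_t}+\Delta_t)$-negative maps over general $t$ while $f_0$ is a weak log canonical model over $0$; thus $h^0(X_t,D_t)=h^0(Y_t,m(K_{Y_t}+\Gamma_t))$ for every $t$, and, $T$ being connected, $h^0(X_t,D_t)$ is constant.

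\textbf{The remaining regime.} If the hypotheses of \eqref{p_mmp} cannot be met — most notably, when only $\Delta$ is big over $T$, $K_X+\Delta$ need not be pseudo-effective over $T$ — I would use instead a Siu-type extension argument: the bigness of $\Delta$ (or of $(m-1)(K_X+\Delta)$) supplies an ample cushion, and one bootstraps from the sections of $D_0$ twisted by a sufficiently ample divisor, which lift to $X$ by relative Serre vanishing for the relevant multiples, down to sections of $D_0$ itself, via multiplier ideals and Nadel vanishing; terminality of the fibres keeps the multiplier ideals trivial along the components of $\Delta_0$. This again gives surjectivity of $H^0(X,\mathcal O_X(D))\to H^0(X_0,\mathcal O_{X_0}(D_0))$; and when $K_X+\Delta$ is not pseudo-effective over $T$, any nonzero extension is nonzero on the general fibre, contradicting $h^0(X_t,D_t)=0$ there, so in that subcase $h^0\equiv0$.

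\textbf{The main obstacle.} The crux is bridging \eqref{t_inv} and \eqref{p_mmp}, whose stable base locus hypothesis is absent from \eqref{t_inv}. I would address this by peeling off the divisorial part of the stable base locus of $|D_0|$ supported on components of $\Delta_0$ — replacing $\Delta$ by a sub-boundary differing from it along those fixed components, which leaves $h^0(X_0,D_0)$ unchanged while only decreasing $h^0(X_t,D_t)$, so that the original statement is recovered by the semicontinuity sandwich of the first step — or, where this does not suffice, by supplementing the relative MMP with the Siu-type extension above. Carrying out this reduction compatibly with the bigness hypothesis and with restriction to the fibres, and correctly tracking the plurigenera $h^0(m(K+\Delta))$ through all the birational models involved, is the technical heart of the proof.
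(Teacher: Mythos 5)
Your overall strategy (cut $T$ down to a smooth curve germ through a chosen point $0$, prove surjectivity of restriction of sections to $X_0$, and produce the needed positivity by running the relative MMP of \eqref{p_mmp} and then applying Kawamata--Viehweg vanishing) is the same as the paper's, but there is a genuine gap exactly at the point you yourself flag as "the technical heart": bridging the stable base locus hypothesis of \eqref{p_mmp}. Your proposed fix --- peeling off from $\Delta_0$ the fixed components of the single linear series $|D_0|$ --- does not do the job: the hypothesis of \eqref{p_mmp} is asymptotic (no component of the \emph{new} boundary may lie in the \emph{stable} base locus of the \emph{new} divisor $K_{X_0}+\Delta'_0$), and removing the fixed part of $|m(K_{X_0}+\Delta_0)|$ for the one given $m$ guarantees neither of these; moreover shrinking $\Delta$ can destroy the bigness hypothesis (of $\Delta$ or of $K_X+\Delta$ over $T$) on which the whole argument runs, and your sandwich then has nothing to stand on. The paper closes this gap with a precise device: it sets $\Theta_0=\Delta_0-\Delta_0\wedge N_{\sigma}(X_0,K_{X_0}+\Delta_0)$ (using \cite[1.2]{BCHM06} to know $N_\sigma$ is a $\mathbb{Q}$-divisor), so that by the asymptotic nature of $N_\sigma$ every component of $\Theta_0$ has $\sigma$-multiplicity zero and $|D_0|=|m(K_{X_0}+\Theta_0)|$, and then adds a small ample $H=\frac{\delta}{m-1-\delta}A$ extracted from the bigness of $K_X+\frac{m}{m-1}\Delta$, so that \eqref{p_mmp} applies to $(X,\Theta+H)$. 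Crucially, it then never needs deformation invariance for the modified pair: the MMP output is used only to manufacture a nef and big divisor $G$ and an integral divisor $L$ on a common resolution $W$ with $H^1(W,\ring W.(L-W_0))=0$, and the chain $|D_0|\subset|L|_{W_0}\subset|D|_{X_0}$ lifts sections of the \emph{original} $D_0$; this is what makes the rounding and bigness bookkeeping close up, and it is absent from your proposal. Your fallback ("a Siu-type extension argument" with multiplier ideals) is not carried out and is essentially a restatement of the theorem to be proved, so it cannot be counted as filling the gap.

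There are also secondary problems you should be aware of. Your upper semicontinuity of $t\mapsto h^0(X_t,\ring X_t.(D_t))$ and your "locally free with formation commuting with base change" for $\ring Y.(m(K_Y+\Gamma))$ both presuppose that the restriction of these divisorial sheaves to fibres computes $\ring X_t.(D_t)$, which is not automatic since $D$ and $m(K_Y+\Gamma)$ are only $\mathbb{Q}$-Cartier Weil divisors; the paper avoids this by arguing with linear series and an honest Cartier divisor $L$ on the smooth model $W$. Your claim that $h^0(X_t,D_t)=h^0(Y_t,m(K_{Y_t}+\Gamma_t))$ "for every $t$" overreaches what \eqref{p_mmp} gives (it controls $t=0$ and the generic fibre only). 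Finally, in the non-pseudo-effective regime your argument is inverted: the issue is to show $h^0(X_0,D_0)=0$, which does not follow from vanishing on the general fibre; the paper's reduction handles this trivially, since if $K_{X_0}+\Delta_0$ is not pseudo-effective then $|D_0|=\emptyset$ and the desired equality $|D_0|=|D|_{X_0}$ holds vacuously.
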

\begin{proof} Let $R$ be a component of $\Delta$, let $\map S.T.$ be the normalisation of
the Stein factorisation of $\map R.T.$ so that $\map S.T.$ is finite and $S$ is normal,
and let
$$
\begin{diagram}
Y   &   \rTo    &  X  \\
\dTo &  & \dTo\\
S   &   \rTo    & T,
\end{diagram}
$$
be the fibre square.  As $\map S.T.$ is finite, $S$ is irreducible and $\map Y.S.$ is
flat, $Y$ is a quasi-projective variety.  $Y$ is normal by \cite[5.12.7]{EGAIV}.
Replacing $\map X.T.$ by $\map Y.S.$ finitely many times, we may assume that the fibres of
$\map R.T.$ are irreducible, for every component $R$ of $\Delta$.  Fix a closed point
$0\in T$.  Replacing $T$ by the intersection of general hyperplane sections containing
$0$, we may assume that $T$ is a curve.  Replacing $T$ by its normalisation and passing to
the fibre square, we may assume that $T$ is smooth.  As the fibres of $\map X.T.$ are
$\mathbb{Q}$-factorial terminal, \cite[3.2]{FH11} implies that $X$ is
$\mathbb{Q}$-factorial.

It suffices to show that $|D_0|=|D|_{X_0}$.  In particular we may suppose that
$K_{X_0}+\Delta_0$ is pseudo-effective.  Further we are free to work locally about $0$.
In particular we may assume that $T$ is affine.  \cite[1.2]{BCHM06} implies that the
divisor $N_{\sigma}(X_0,K_{X_0}+\Delta_0)$, defined in \eqref{ss-NC}, is a
$\mathbb{Q}$-divisor.  In particular
$$
\Theta_0=\Delta_0-\Delta_0\wedge N_{\sigma}(X_0,K_{X_0}+\Delta_0),
$$ 
is a $\mathbb{Q}$-divisor.  By assumption we may find a $\mathbb{Q}$-divisor $0\leq
\Theta\leq \Delta$ whose restriction to $X_0$ is $\Theta_0$.  If we set
$$
\mu=\frac m{m-1},
$$
then $K_X+\mu\Delta$ is big.  Therefore, we may find $\mathbb{Q}$-divisors $A\geq 0$ and
$B\geq 0$, where $A$ is ample, the support of $A$ is a prime divisor and $X_0$ is not a
component of $B$, such that $K_X+\mu\Delta \sim_{\mathbb{Q}} A+B$.  Possibly passing to an
open subset of $T$ we may assume that the components of $B$ dominate $T$.  Pick $\delta\in
(0,1/2)$ such that $(X_t,\Delta_t+\delta (A_t+B_t))$ is terminal for every $t\in T$.  If
we let
$$
H=\frac {\delta}{m-1-\delta} A,
$$
then 
$$
D-\Xi \sim_{\mathbb{Q}} K_X+\Phi+\delta B+(m-1-\delta)(K_X+\Theta+H),
$$
where $\Phi=(1-\delta\mu+\delta)\Delta$ and $\Xi=(m-1-\delta)(\Delta-\Theta)$.  

As $H_0$ is ample, no component of $\Theta_0+H_0$ belongs to the stable base locus of
$K_{X_0}+\Theta_0+H_0$.  \eqref{p_mmp} implies that we may find a log terminal model
$f\colon\rmap X.Y.$ for $(X,\Theta+H)$, such that the induced birational map
$f_0\colon\rmap X_0.Y_0.$ is a weak log canonical model of $(X_0,\Theta_0+H_0)$.

Let $p\colon\map W.X.$ and $q\colon\map W.Y.$ resolve $f\colon\rmap X.Y.$, where $p$ is
also a log resolution of $(X,\Delta+A+B)$.  If we let 
$$
G=(m-1-\delta)f_*(K_X+\Theta+H),
$$
then $G$ is big and nef and
$$
(m-1-\delta)p^*(K_X+\Theta+H)=q^*G+F,
$$
where $F\geq 0$ is exceptional for $q$.  

Let $W_0$ be the strict transform of $X_0$.  As $(X_0,\Phi_0+\delta B_0)$ is kawamata log
terminal, inversion of adjunction, \cite[1.4.5]{BCHM06}, implies that $(X,X_0+\Phi+\delta
B)$ is purely log terminal.  Therefore, if we write
$$
K_W+W_0=p^*(K_X+X_0+\Phi+\delta B)+E,
$$
then $\rfup E.\geq 0$ is exceptional for $p$.  Let
$$
L=\rfup p^*(D-\Xi)+E-F..
$$
Possibly passing to an open subset of $T$, we may assume $X_0$ is $\mathbb{Q}$-linearly
equivalent to zero.  In particular, 
$$
L-W_0 \sim_{\mathbb{Q}} K_W+C+q^*G,
$$ 
where $C$ is the fractional part of $-p^*(D-\Xi)-E+F$.  Hence $(W,C)$ is kawamata log
terminal and Kawamata-Viehweg vanishing implies that
$$
H^1(W,\ring W.(L-W_0))=0.
$$

Let $N=p^*(K_X+\Theta)-q^*f_*(K_X+\Theta)$.  As $H$ is ample, $p^*H\leq q^*f_*H$, and so
$$
mN=(1+\delta)N+(m-1-\delta)N\geq F.
$$  
As $\Xi\leq m(\Delta-\Theta)$ we have $D-\Xi\geq m(K_X+\Theta)$ and so it follows that
\begin{align*} 
M&=L-\rfdown mq^*f_*(K_X+\Theta).\\
 &=\rfup L-mq^*f_*(K_X+\Theta).\\
 &\geq \rfup mN+E-F.\\
 &\geq \rfup E..
\end{align*} 
Let $q_0\colon\map W_0.X_0.$ denote the restriction of $q$ to $W_0$ and let $L_0$ and
$M_0$ denote the restriction of $L$ and $M$ to $W_0$.  We have
\begin{align*} 
|D_0|&=|m(K_{X_0}+\Theta_0)| && \text{by definition of $\Theta_0$} \\
     &\subset |m f_{0*}(K_{X_0}+\Theta_0)| && \text{since $f_0$ is a birational contraction} \\
     &=|m q_0^*f_{0*}(K_{X_0}+\Theta_0)| && \\
     &\subset |L_0| && \text{as $M_0\geq 0$} \\
     &=|L|_{W_0} && \text{since $H^1(W,\ring W.(L-W_0))=0$}\\          
     &\subset |D|_{X_0} && \text{since $\rfup E.$ is exceptional for $p$.}
\end{align*}  
Thus equality holds, as the reverse inequality holds automatically.
\end{proof}

\begin{proof}[Proof of \eqref{t_dilp}] We first prove (1).  Let $0\in T$ be a closed
point.  Replacing $T$ by an unramified cover, we may assume that the strata of
$(X,\Delta)$ intersect $X_0$ in strata of $(X_0,\Delta_0)$.  Since the only valuations of
non-negative coefficient lie over the strata of $(X,\Delta)$, replacing $(X,\Delta)$ by a
blow up, we may assume that $(X,\Delta)$ is terminal.  Thus (1) follows from
\eqref{t_inv}.

Now we prove (2).  Pick $m_0>0$ such that $m_0\Delta$ is integral and an ample divisor $H$
such that $H+m_0\Delta$ is very ample.  Pick a prime divisor $A\sim H+m_0\Delta$, such
that $(X,\Delta+A)$ has simple normal crossings over $T$.  If $m\geq m_0$ is any positive
integer such that $m\Delta$ is integral, then
$$
(X,\Delta'=\frac {m-m_0}m\Delta+\frac 1 m A)
$$
is a simple normal crossings pair and it is kawamata log terminal.  Further
$$
K_X+\Delta'\sim _{\mathbb Q}K_X+\Delta+H/m,
$$
and $\Delta'$ is big over $T$.  Thus (2) follows from (1).

Note that 
$$
\vol(X_t,K_{X_t}+\Delta_t)=\lim_{\epsilon\to 0}\vol(X_t,K_{X_t}+(1-\epsilon)\Delta_t).
$$
As $(X_t,(1-\epsilon)\Delta_t)$ is kawamata log terminal, (3) follows from (1) and (2).  
\end{proof}

\section{DCC for the volume of bounded pairs}
\label{s_bound-volume}

We prove \eqref{t_dcc} in this section.  We first deal with the case that $T$ is a closed
point.

\begin{proposition}\label{p_limit} Fix a set $I\subset [0,1]$ which satisfies the 
DCC and a simple normal crossings pair $(Z,B)$, where $Z$ is projective of dimension $n$.
Let $\mathfrak{D}$ be the set of simple normal crossings pairs $(X,\Delta)$, where $X$ is
projective, the coefficients of $\Delta$ belong to $I$ and there is a birational morphism
$f\colon\map X.Z.$ with $\Phi=f_*\Delta\leq B$.

Then the set 
$$
\{\, \vol(X,K_X+\Delta) \,|\, (X,\Delta)\in \mathfrak{D}\,\},
$$
also satisfies the DCC.  
\end{proposition}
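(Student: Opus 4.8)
The plan is to argue by contradiction, following the strategy laid out in the introduction. Suppose the conclusion fails; then there is a sequence $(X_i,\Delta_i)\in\mathfrak D$ with birational morphisms $f_i\colon X_i\to Z$ such that $v_i=\vol(X_i,K_{X_i}+\Delta_i)$ is strictly decreasing. Set $\Phi_i=f_{i*}\Delta_i\le B$. Since every component of $\Phi_i$ is one of the finitely many components of $B$ and the coefficients lie in the DCC set $I$, after passing to a subsequence we may assume $\Phi_i\le\Phi_{i+1}$, so that $\Phi=\lim_i\Phi_i$ exists and $\vol(Z,K_Z+\Phi_i)$ is nondecreasing. The goal is then to derive the contradiction that $v_i$ is also nondecreasing.

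The main tool is the calculus of $\mathbf b$-divisors. First I would attach to each $(X_i,\Delta_i)$ the $\mathbf b$-divisor $\mathbf M_{\Delta_i}$ which records $\Delta_i$ on $X_i$ and assigns coefficient one to every divisor exceptional over $X_i$, and set $\mathbf B=\lim_i\mathbf M_{\Delta_i}$ (well defined coefficient by coefficient after a further subsequence), whose trace on $Z$ is $\Phi$. Next, the $\mathbf b$-divisor $\mathbf L_{\Delta_i}$ which assigns to each model the positive part of the log pullback of $K_{X_i}+\Delta_i$; crucially, replacing $\Delta_i$ by $\Delta_i'=\Delta_i\wedge\mathbf L_{\Phi_i,X_i}$ leaves the volume unchanged, by (2) of \eqref{l_simple}. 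The heart of the argument is to produce a single higher model $\sigma\colon Z'\to Z$, independent of $i$, together with modified boundaries $\Delta_i^\circ$ that differ from $\Delta_i$ only along strict transforms of the $\sigma$-exceptional divisors — hence have the same volume and have coefficients in a set $J\supseteq I$ still satisfying the DCC — such that, writing $\Phi_i^\circ$ for the pushforward of $\Delta_i^\circ$ to $Z'$, $\Phi^\circ=\lim_i\Phi_i^\circ$ and $\mathbf B^\circ=\lim_i\mathbf M_{\Delta_i^\circ}$, one has $\mathbf L_{\Phi^\circ}\le\mathbf B^\circ$. Granting this, for every $i$ the $\mathbf b$-divisor $\mathbf L_{\Phi_i^\circ}$ satisfies $\mathbf L_{\Phi_i^\circ}\le\mathbf L_{\Phi^\circ}\le\mathbf B^\circ$ and is therefore already realised on $Z'$, so $v_i$ equals the volume of $K_{Z'}$ plus the trace of this $\mathbf b$-divisor on $Z'$; since $\Phi_i^\circ$ is nondecreasing in $i$, so is that trace, hence so is $v_i$ — contradiction.

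The construction of $Z'\to Z$ goes by induction on the \textbf{weight} $w$ of $(Z,\Phi)$, the number of components of $\Phi$ of coefficient one, and is carried out locally near a point $p$ lying on such a stratum. When $w$ is maximal one simply blows up $p$: the base locus of the pushed-forward linear systems $f_{i*}^\circ|m(K_{X_i}+\Delta_i)|$ contains $p$ for all large divisible $m$, which forces the new exceptional divisor into the positive part and lets one lower its coefficient away from one. When $w=0$ the pair $(Z,\Phi)$ is kawamata log terminal and one extracts every divisor of positive coefficient, after which $\mathbf L_{\Phi^\circ}$ vanishes on every non-divisorial valuation and the required inequality is trivial. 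The intermediate case is the one where, locally, exactly one component through $p$ has coefficient one; there I would pass to toric language, describe the valuations $\nu$ with $\mathbf L_\Phi(\nu)>0$ as the lattice points whose first coordinate lies in a finite set $\mathfrak F$, choose for each such coordinate a second coordinate minimising the value of $\mathbf B$ (legitimate precisely because $I$ satisfies the DCC), and take $Z'$ to be any simple normal crossings model on which all these finitely many valuations become divisorial; a toric computation then shows every valuation centred on a coefficient-one component of $\Phi^\circ$ satisfies $\mathbf L_{\Phi^\circ}(\nu')\le\mathbf B^\circ(\nu')$, so the weight drops and the induction continues.

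I expect the weight-reduction step to be the main obstacle, in two respects. First, the toric inequality that certifies the weight decreases is delicate, since the behaviour of $\mathbf L_\Phi$ under blow-ups is exactly the phenomenon (cf. \eqref{e-ndcc}) that prevents a naive approach. Second, the statement is in arbitrary dimension $n$, whereas the heuristic above concerns surfaces: in general one must organise the local analysis around a stratum of arbitrary codimension, where $\mathfrak F$ and the minimisation live in a lattice of higher rank, and keep careful track of how blowing up strata interacts with $\mathbf L$, with $\mathbf B$, and with the modifications $\Delta_i^\circ$. A secondary point needing care is the bookkeeping that keeps the coefficients of the $\Delta_i^\circ$ in a fixed DCC set while leaving volumes unchanged, together with the repeated use of \eqref{l_simple} to check that the various $\wedge$-truncations do not affect volumes.
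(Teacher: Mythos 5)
Your overall architecture is the same as the paper's: the $\mathbf{b}$-divisors $\mathbf{M}_{\Delta_i}$ and $\mathbf{L}$, the limit $\mathbf{B}$ with trace $\Phi$ on $Z$, and a reduction to a single higher model $Z'\to Z$, independent of $i$, with modified boundaries $\Delta_i^\circ$ (volumes preserved via (2) of \eqref{l_simple}, coefficients in a fixed DCC set) such that $\mathbf{L}_{\Phi^\circ}\le \mathbf{B}^\circ$; this is exactly the cut/reduction mechanism of \eqref{d_reduction} and \eqref{l_reduction}, and your weight-induction sketch for producing $Z'$ follows the same toric strategy. The problem is the last step, where you cash in the inequality.

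From $\mathbf{L}_{\Phi_i^\circ}\le\mathbf{L}_{\Phi^\circ}\le\mathbf{B}^\circ$ you conclude that $\mathbf{L}_{\Phi_i^\circ}$ is ``already realised on $Z'$'' and hence that $v_i=\vol(Z',K_{Z'}+\Phi_i^\circ)$, which would make $v_i$ nondecreasing. This does not follow. To lift sections of $m(K_{Z'}+\Phi_i^\circ)$ back to $X_i$ you need $\mathbf{L}_{\Phi_i^\circ,X_i}\le\Delta_i^\circ$, i.e.\ a comparison of $\mathbf{L}$ with the \emph{individual} boundary $\mathbf{M}_{\Delta_i^\circ}$, not with the limit $\mathbf{B}^\circ$. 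For fixed $i$ the limit typically strictly dominates $\mathbf{M}_{\Delta_i^\circ}$: for instance $\mathbf{B}^\circ(\nu)=1$ for any divisor on $X_i$, exceptional over $Z'$, which does not appear on $X_j$ for large $j$, so the inequality $\mathbf{L}_{\Phi_i^\circ}\le\mathbf{B}^\circ$ carries no information about whether $\Delta_i^\circ$ dominates $\mathbf{L}_{\Phi_i^\circ,X_i}$ there; only $v_i\le\vol(Z',K_{Z'}+\Phi_i^\circ)$ is automatic, and the claimed equality is precisely the assertion that the $E_i^-$ problem disappears on $Z'$ for \emph{every} $i$, which the reduction does not give. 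The missing content is the paper's endgame: one proves the two-sided bound $\vol(Z',K_{Z'}+(1-\epsilon)\Phi^\circ)\le v_i\le\vol(Z',K_{Z'}+\Phi^\circ)$ for $i$ large, by taking a terminal model $Y\to Z'$ of the perturbed limit pair with $\Psi=\mathbf{L}_{(1-\epsilon)\Phi^\circ,Y}$; the hypothesis $\mathbf{L}_{\Phi^\circ}\le\mathbf{B}^\circ$, the margin $\Psi\le(1-\eta)\mathbf{L}_{\Phi^\circ,Y}$, and the fact that only the finitely many divisors on $Y$ are relevant give $\Psi\le\mathbf{M}_{\Delta_i^\circ,Y}$ for $i\gg 0$, and terminality of $(Y,\Psi)$ transfers this to $\mathbf{L}_{\Psi,X_i}\le\Delta_i^\circ$, yielding the lower bound; letting $\epsilon\to 0$ shows $v_i$ is eventually constant. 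Some such perturbation-plus-terminalisation argument is needed to pass from an inequality on the limiting $\mathbf{b}$-divisor to a statement about each individual $(X_i,\Delta_i)$; as written, your contradiction is not established. (A minor secondary point: preserving the volume when modifying $\Delta_i$ along $\sigma$-exceptional divisors is not a consequence of the support of the modification alone, but of keeping the coefficients above those of the relevant $\mathbf{L}$, again via (2) of \eqref{l_simple}.)
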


\begin{definition}\label{d_associated-b-divisor} Let $(X,\Delta)$ be a log pair.  
If $\pi\colon\map Y.X.$ is a birational morphism, then we may write
$$
K_Y+\Gamma=\pi^*(K_X+\Delta)+E,
$$
where $\Gamma\geq 0$ and $E\geq 0$ have no common components, $\pi_*\Gamma=\Delta$ and
$\pi_*E=0$.    

Define a \textbf{b}-divisor $\mathbf{L}_{\Delta}$ by setting
$\mathbf{L}_{\Delta,Y}=\Gamma$.
\end{definition}

\begin{lemma}\label{l_simple} Let $(X,\Delta)$ be a simple normal crossings pair, where $X$ is a
projective variety.
\begin{enumerate} 
\item If $\map Y.X.$ is a birational morphism such that $(Y,\Theta=\mathbf{L}_{\Delta,Y})$
has simple normal crossings, and $\Gamma-\Theta\geq 0$ is exceptional, then
$$
\vol(X,K_X+\Delta)=\vol(Y,K_Y+\Gamma).
$$
\item If $f\colon\map X.Z.$ is a birational morphism such that
$(Z,\Phi=\mathbf{L}_{\Delta,Z})$ has simple normal crossings and $\Theta=\Delta\wedge
\mathbf{L}_{\Phi,X}$, then
$$
\vol(X,K_X+\Delta)=\vol(X,K_X+\Theta).
$$
\end{enumerate} 
\end{lemma}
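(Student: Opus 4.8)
The plan is to reduce everything to one elementary fact about volumes: if $\pi\colon Y\to X$ is a birational morphism of normal projective varieties, $D$ a $\mathbb{Q}$-Cartier $\mathbb{Q}$-divisor on $X$, and $F\geq 0$ a $\pi$-exceptional $\mathbb{Q}$-divisor on $Y$, then $\vol(Y,\pi^*D+F)=\vol(X,D)$; indeed, along the cofinal set of $m$ for which $mD$ and $mF$ are integral, the projection formula together with $\pi_*\mathcal{O}_Y(mF)=\mathcal{O}_X$ gives $\pi_*\mathcal{O}_Y(\pi^*(mD)+mF)=\mathcal{O}_X(mD)$, hence $h^0(Y,\pi^*(mD)+mF)=h^0(X,mD)$. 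Granting this, part (1) is immediate: by Definition \ref{d_associated-b-divisor} (applied to $(X,\Delta)$ and the given $\pi\colon Y\to X$) we may write $K_Y+\Theta=\pi^*(K_X+\Delta)+E$ with $E\geq 0$ exceptional over $X$, and since $\Gamma-\Theta\geq 0$ is also exceptional, $K_Y+\Gamma=\pi^*(K_X+\Delta)+(E+\Gamma-\Theta)$ with $E+\Gamma-\Theta\geq 0$ exceptional, so $\vol(Y,K_Y+\Gamma)=\vol(X,K_X+\Delta)$. The simple normal crossings hypotheses are not needed here.

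For part (2), I would first record that $\Delta-\Theta\geq 0$ is supported on the $f$-exceptional locus: on a prime divisor of $X$ not contracted by $f$, both $\Delta$ and $\mathbf{L}_{\Phi,X}$ carry the coefficient of the corresponding component of $\Phi=f_*\Delta$, so $\Theta=\Delta\wedge\mathbf{L}_{\Phi,X}$ agrees with $\Delta$ there. In particular $K_X+\Theta\leq K_X+\Delta$, so $\vol(X,K_X+\Theta)\leq\vol(X,K_X+\Delta)$, and it remains to prove the reverse inequality. The key claim will be that $H^0(X,\mathcal{O}_X(m(K_X+\Delta)))=H^0(X,\mathcal{O}_X(m(K_X+\Theta)))$ for every $m$ with $m\Delta$ integral (for such $m$, since $Z$ is smooth, $m\mathbf{L}_{\Phi,X}$ and hence $m\Theta$ are integral too); letting $m\to\infty$ then finishes.

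To prove the nontrivial inclusion $\subseteq$, I would fix such an $m$ and view a section of $\mathcal{O}_X(m(K_X+\Delta))$ as a rational $m$-canonical form $\omega$ with $\operatorname{div}_X(\omega)+m\Delta\geq 0$. Since the images in $Z$ of the $f$-exceptional divisors have codimension at least two and $f$ is an isomorphism over their complement (identifying $\Delta$ with $\Phi$ there), normality of $Z$ forces $\operatorname{div}_Z(\omega)+m\Phi\geq 0$. Writing $K_X=f^*K_Z+R$ with $R=\sum_i r_iE_i\geq 0$ the relative canonical divisor (effective because $Z$ is smooth), one has $\operatorname{div}_X(\omega)=f^*\operatorname{div}_Z(\omega)+mR$, so along each $f$-exceptional prime $E_i$, $\operatorname{ord}_{E_i}\operatorname{div}_X(\omega)\geq -m\,\operatorname{mult}_{E_i}(f^*\Phi)+mr_i=-mp_i$, where $p_i:=\operatorname{mult}_{E_i}\big(f^*(K_Z+\Phi)-K_X\big)$, so that $\operatorname{mult}_{E_i}\mathbf{L}_{\Phi,X}=\max(0,p_i)$. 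Combining with the a priori bound $\operatorname{ord}_{E_i}\operatorname{div}_X(\omega)\geq -m\,\operatorname{mult}_{E_i}\Delta$ and using $p_i\leq\max(0,p_i)$ gives $\operatorname{ord}_{E_i}\operatorname{div}_X(\omega)\geq -m\min\big(p_i,\operatorname{mult}_{E_i}\Delta\big)\geq -m\min\big(\max(0,p_i),\operatorname{mult}_{E_i}\Delta\big)=-m\,\operatorname{mult}_{E_i}\Theta$. Since $\Delta$ and $\Theta$ agree off the $f$-exceptional locus, $\operatorname{div}_X(\omega)+m\Theta\geq 0$, so $\omega$ is a section of $\mathcal{O}_X(m(K_X+\Theta))$; the inclusion $\supseteq$ is automatic because $\Theta\leq\Delta$.

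The main obstacle is this last orders-of-vanishing bookkeeping: one must check that the bound $\operatorname{ord}_{E_i}\geq -mp_i$ coming from pushing to $Z$ and pulling back, combined with the a priori bound $\operatorname{ord}_{E_i}\geq -m\operatorname{mult}_{E_i}\Delta$, recovers \emph{exactly} the coefficient $\operatorname{mult}_{E_i}\Theta=\min(\max(0,p_i),\operatorname{mult}_{E_i}\Delta)$; the only subtle case is $p_i<0$, where one must recover genuine vanishing of $\omega$ along $E_i$, and this is precisely what makes $\Delta\mapsto\Delta\wedge\mathbf{L}_{\Phi,X}$ harmless for the volume. The smoothness of $Z$ enters only through $R\geq 0$ and through $f^*(K_Z+\Phi)$ being unambiguously defined; no log canonicity of $(Z,\Phi)$ is used.
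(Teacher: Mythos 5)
Your proposal is correct and follows essentially the same route as the paper: (1) is the standard observation that adding an effective exceptional divisor to a pullback does not change section spaces, and (2) amounts to the paper's chain $H^0(X,\mathcal{O}_X(m(K_X+\Delta)))\subset H^0(Z,\mathcal{O}_Z(m(K_Z+\Phi)))=H^0(X,\mathcal{O}_X(m(K_X+\mathbf{L}_{\Phi,X})))$, from which one takes the componentwise minimum with $\Delta$ to land in $H^0(X,\mathcal{O}_X(m(K_X+\Theta)))$. The only difference is presentational: the paper argues at the level of these $H^0$ identifications, while you unwind them into explicit order-of-vanishing estimates along the exceptional divisors, which is a correct (and slightly more self-contained) way of saying the same thing.
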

\begin{proof} (1) is clear, as 
$$
H^0(X,\ring X.(m(K_X+\Delta)))\simeq H^0(Y,\ring Y.(m(K_Y+\Gamma))),
$$
for all $m$.  

For (2), suppose $m$ is a sufficiently divisible positive integer.  We have
\begin{align*} 
H^0(X,\ring X.(m(K_X+\Delta)))&\subset H^0(Z,\ring Z.(m(K_Z+\Phi)))\\
                              &= H^0(X, \ring X.(m(K_X+\mathbf{L}_{\Phi,X}))),
\end{align*} 
and so
\[
H^0(X, \ring X.(m(K_X+\Delta)))=H^0(X,\ring X.(m(K_X+\Theta))).\qedhere
\]
\end{proof}

\begin{lemma}\label{d_toric} Let $(Z,\Phi)$ be a simple normal crossings pair 
which is log canonical.

If $\nu$ is a valuation such that $\mathbf{L}_{\Phi}(\nu)>0$, then the centre of $\nu$ is
a stratum $W$ of $(Z,\Phi)$ and there is a birational morphism $\map Y=Y_{\nu}.Z.$ such
that $\rho(Y/Z)=1$, $Y$ is $\mathbb{Q}$-factorial and the centre of $\nu$ is a divisor on
$Y$; $Y_{\nu}$ is unique with these properties.  
\end{lemma}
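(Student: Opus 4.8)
The plan is to prove the three assertions --- that the centre of $\nu$ is a stratum, that $Y_\nu$ exists, and that it is unique --- more or less independently, after reducing to a local question near the centre.

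\emph{Reduction; the centre is a stratum.} All three assertions are local on $Z$, so I would replace $Z$ by a neighbourhood of a general point $z$ of the centre $W$ of $\nu$ on $Z$; then $Z$ is smooth, the components of $\Phi$ through $z$ are exactly those containing $W$, and we may pick local coordinates $x_1,\dots,x_n$ at $z$ so that these components are $\{x_1=0\},\dots,\{x_k=0\}$, with coefficients $b_1,\dots,b_k\in(0,1]$. Put $v_i=\nu(x_i)$, so $v_j\ge 1$ for $j\le k$. By \eqref{d_associated-b-divisor}, if $a(\nu)$ denotes the coefficient of $E_\nu$ with respect to $(Z,\Phi)$ in the sense of \eqref{ss-NC}, then $\mathbf{L}_\Phi(\nu)=\max(0,a(\nu))$, so the hypothesis reads $a(\nu)>0$. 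Since $(Z,\sum_{i=1}^n\{x_i=0\})$ has simple normal crossings it is log canonical, so the coefficient of $E_\nu$ with respect to it --- which equals $a(\nu)+\sum_{j=1}^k(1-b_j)v_j+\sum_{i=k+1}^n v_i$ --- is at most $1$; hence
$$a(\nu)\ \le\ 1-\sum_{i=k+1}^n v_i.$$
Let $W_0=\{x_1=\dots=x_k=0\}$ be the smallest stratum through $z$. If $W$ were not a stratum then $W\subsetneq W_0$, and (as $z$ is a smooth point of $W$) we may arrange in addition that $W=\{x_1=\dots=x_{k+s}=0\}$ with $s=\operatorname{codim}_{W_0}W\ge 1$; then $v_{k+1},\dots,v_{k+s}\ge 1$, and the displayed inequality forces $a(\nu)\le 1-s\le 0$, a contradiction. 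Therefore $W$ is a stratum.

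\emph{Existence of $Y_\nu$.} If $W$ is a divisor on $Z$ there is nothing to extract, so I assume $\nu$ is exceptional over $Z$, i.e. $\operatorname{codim}_Z W\ge 2$ (the only case in which $Y_\nu$ is needed). After shrinking $Z$ and replacing $\Phi$ by $(1-t)\Phi$ for small $t>0$ --- which keeps the corresponding coefficient $>0$ and makes the pair kawamata log terminal --- I would invoke the standard divisorial extraction of \cite{BCHM}: pass to a log resolution on which $E_\nu$ is a divisor and run the appropriate relative minimal model program over $Z$, producing a projective birational $f\colon Y_\nu\to Z$ with $Y_\nu$ $\mathbb{Q}$-factorial whose exceptional locus is precisely the prime divisor $E_\nu$. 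As $Y_\nu$ and $Z$ are $\mathbb{Q}$-factorial and $f$ has a single exceptional divisor, $\rho(Y_\nu/Z)=1$. Geometrically, in the local chart above one expects this extraction to be toric --- $\nu$ a monomial valuation of weights $(v_1,\dots,v_r)$, $r=\operatorname{codim}_Z W$, subject to $\sum v_i(1-b_i)<1$, and $Y_\nu$ the star subdivision of $\langle e_1,\dots,e_r\rangle$ along the ray $(v_1,\dots,v_r)$, a simplicial (hence $\mathbb{Q}$-factorial) modification with one exceptional divisor --- which gives an alternative construction at least when $Z$ is a surface, where $\mathbf{L}_\Phi(\nu)>0$ forces $\nu$ to be monomial.

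\emph{Uniqueness.} Let $f'\colon Y'\to Z$ be any birational morphism with the stated properties. Two distinct prime $f'$-exceptional divisors cannot be numerically proportional over $Z$ (negativity lemma, cf. \cite{KM98}), so $\rho(Y'/Z)=1$ forces a unique $f'$-exceptional divisor, which must be $E_\nu$ since $\operatorname{codim}_Z W\ge 2$. As $E_\nu$ is effective and $f'$-exceptional it is not $f'$-nef (negativity lemma again), so, $\rho(Y'/Z)$ being $1$, $-E_\nu$ is $f'$-ample. Choosing $N$ with $NE_\nu$ Cartier we get $Y'=\Proj_Z\bigoplus_{m\ge 0}f'_*\sO_{Y'}(-mNE_\nu)$, and $f'_*\sO_{Y'}(-mNE_\nu)$ is the ideal sheaf $\{\,\varphi\in\sO_Z\mid\nu(\varphi)\ge mN\,\}$, which depends only on $\nu$ and $Z$. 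Hence $Y'\cong Y_\nu$ over $Z$.

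\emph{Main obstacle.} The substantive point is the stratum statement; its two ingredients --- log canonicity of the total reduced simple normal crossings boundary, and the possibility of choosing the local coordinates adapted simultaneously to $\Phi$ and, at a general point of $W$, to $W$ itself --- are elementary but must be set up carefully. Existence and uniqueness of $Y_\nu$ are then routine: \cite{BCHM} for the extraction, and the negativity lemma together with relative $\Proj$ for the uniqueness.
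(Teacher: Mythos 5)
Your proposal is correct, and it is essentially a fleshed-out version of what the paper disposes of in one line: the paper simply says the lemma "is a consequence of the existence of log terminal models \cite{BCHM06} and uniqueness of log canonical models", and treats the stratum claim as standard (elsewhere it invokes \cite[2.31]{KM98} for exactly this kind of statement). Your existence step is the same as the paper's -- extract the single divisorial valuation of positive coefficient over the auxiliary klt pair $(Z,(1-t)\Phi)$ via the \cite{BCHM06} extraction/log terminal model machinery, then read off $\rho(Y_\nu/Z)=1$ from $\mathbb{Q}$-factoriality plus the single exceptional divisor. Where you differ is that you make explicit what the paper leaves to the reader: the discrepancy computation showing the centre must be a stratum (comparing coefficients with respect to $(Z,\Phi)$ and the full reduced coordinate-hyperplane boundary, which is log canonical), and the uniqueness, which you prove directly via the negativity lemma ($-E_\nu$ is $f'$-ample when $\rho=1$) and the intrinsic description $Y'=\Proj_Z\bigoplus_m f'_*\sO_{Y'}(-mNE_\nu)$ as the Proj of the valuation-ideal algebra, rather than by quoting uniqueness of (relative) log canonical models -- the two are the same mechanism, but your version is more self-contained. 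Two small remarks: the shrinking of $Z$ in the existence step is unnecessary (and, taken literally, would only produce $Y_\nu$ locally; just apply the extraction to the globally klt pair $(Z,(1-t)\Phi)$, or note that your intrinsic Proj description glues the local constructions), and the degenerate case where the centre of $\nu$ is already a divisor on $Z$ -- where the statement with $\rho(Y/Z)=1$ is not literally meaningful -- is sidestepped by you exactly as it is by the paper, which only ever uses $Y_\nu$ for valuations exceptional over $Z$.
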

\begin{proof} This is a consequence of the existence of log terminal models, which is
proved in \cite{BCHM06}, and uniqueness of log canonical models.
\end{proof}

\begin{definition}\label{d_one-b-divisor} Let $(X,\Delta)$ be a log pair.  
Define a \textbf{b}-divisor $\mathbf{M}_{\Delta}$ by assigning to any valuation $\nu$, 
$$
\mathbf{M}_{\Delta}(\nu)=\begin{cases} \mult_B(\Delta) & \text{if the centre of $\nu$ is a divisor $B$ on $X$,} \\
                                       1 & \text{otherwise.}
\end{cases}
$$
\end{definition}

\begin{definition}\label{d_reduction} Let $\mathbf{B}$ be a $\mathbf{b}$-divisor whose 
coefficients belong to $[0,1]$ and let $(Z,\Phi=\mathbf{B}_Z)$ be a model with simple
normal crossings.  Let $\map Z'.Z.$ be a log resolution, and let $\Sigma$ be a set of
valuations $\sigma$ whose centres are exceptional divisors for $\map Z'.Z.$, such that
$\mathbf{L}_{\Phi}(\sigma)>0$.

For every valuation $\sigma\in \Sigma$, let $\Gamma_{\sigma}=(\mathbf{L}_{\Phi}\wedge
\mathbf{B})_{Y_{\sigma}}$, where $\map Y_{\sigma}.Z.$ is defined in \eqref{d_toric}.  Let
$$
\Theta=\bigwedge_{\sigma\in \Sigma} \mathbf{L}_{\Gamma_{\sigma},Z'},
$$
the minimum of the divisors $\mathbf{L}_{\Gamma_{\sigma},Z'}$.  

The \textbf{cut} of $(Z,\mathbf{B})$, associated to $\map Z'.Z.$ and $\Sigma$, is the
pair $(Z',\mathbf{B}')$, where
$$
\mathbf{B}'=\mathbf{B}\wedge \mathbf{M}_{\Theta},
$$ 
so that the trace of $\mathbf{B}'$ on $Z'$ is $\Theta\wedge \mathbf{B}_{Z'}$ and otherwise
$\mathbf{B}'$ is the same $\mathbf{b}$-divisor as $\mathbf{B}$.

We say that the pair $(Z',\mathbf{B}')$ is a \textbf{reduction} of the pair
$(Z,\mathbf{B})$, if they are connected by a sequence of cuts, that is, there are pairs,
$(Z_i,\mathbf{B}_i)$, $0\leq i\leq k$, starting at $(Z_0,\mathbf{B}_0)=(Z,\mathbf{B})$ and
ending at $(Z_k,\mathbf{B}_k)=(Z',\mathbf{B}')$, such that $(Z_{i+1},\mathbf{B}_{i+1})$ is
a cut of $(Z_i,\mathbf{B}_i)$, for each $0\leq i<k$.
\end{definition}

\begin{lemma}\label{l_reduction} Let $\mathbf{B}$ be a $\mathbf{b}$-divisor
whose coefficients belong to a set $I\subset [0,1]$ which satisfies the DCC, and let
$(Z,\Phi=\mathbf{B}_Z)$ be a model with simple normal crossings.

Then there is a reduction $(Z',\mathbf{B}')$ of $(Z,\mathbf{B})$ such that
$$
\mathbf{L}_{\Phi'}\leq \mathbf{B}',
$$ 
where $\Phi'=\mathbf{B}'_{Z'}$.  
\end{lemma}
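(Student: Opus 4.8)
The plan is to prove \eqref{l_reduction} by induction on the \textbf{weight} $w=w(Z,\Phi)$, defined as the largest integer $w$ for which some stratum of $(Z,\Phi)$ is a component of an intersection $B_1\cap\dots\cap B_w$ of $w$ distinct prime divisors occurring in $\Phi$ with coefficient one; note that $0\le w\le n$. Both the formation of a cut (cf. \eqref{d_reduction}) and the validity of the inequality $\mathbf{L}_{\Phi'}\le\mathbf{B}'$ at a given valuation involve only valuations centred over a single stratum of $(Z,\Phi)$, so the whole argument is local on $Z$. Since $(Z,\Phi)$ has only finitely many strata, it suffices, working over a neighbourhood of each stratum $W$ of maximal weight $w$ in turn, to exhibit a finite sequence of cuts, each an isomorphism away from $W$, after which every stratum lying over $W$ has weight strictly smaller than $w$. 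Carrying this out at all the top-weight strata produces a reduction of strictly smaller weight, and the induction then reduces us to the base case $w=0$.

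When $w=0$ the pair $(Z,\Phi)$ is kawamata log terminal, and the crucial point is that, because $I$ satisfies the DCC, there are only finitely many valuations $\sigma$ with $\mathbf{L}_\Phi(\sigma)>0$: over a stratum $\bigcap_{i\in J}B_i$ the coefficients $b_i<1$ are bounded away from $1$, a weighted blow up of weights $(v_i)_{i\in J}$ has $\mathbf{L}_\Phi$-coefficient $1-\sum_{i\in J}v_i(1-b_i)$, and this is positive for only finitely many weight vectors. I would then take a single cut, with $\map Z'.Z.$ a simple normal crossings log resolution on which the centre of each of these finitely many $\sigma$ is a divisor, and with $\Sigma$ the corresponding set of valuations. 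Since passing to the higher model $Z'$ and replacing $\Phi$ by the smaller $\Phi'=\mathbf{B}'_{Z'}$ only raises discrepancies, we have $\mathbf{L}_{\Phi'}\le\mathbf{L}_\Phi$ as $\mathbf b$-divisors; hence any valuation $\rho$ with $\mathbf{L}_{\Phi'}(\rho)>0$ has $\mathbf{L}_\Phi(\rho)>0$, so lies in the finite set above and has divisorial centre on $Z'$. For a prime divisor $D$ on $Z'$ the coefficient of $D$ in $\mathbf{L}_{\Phi',Z'}$ equals its coefficient in $\Phi'=\mathbf{B}'_{Z'}$ (the trace of $\mathbf{L}_{\Phi'}$ on $Z'$ is $\Phi'$ itself, by \eqref{d_associated-b-divisor} with the structure morphism the identity). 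Thus $\mathbf{L}_{\Phi'}\le\mathbf{B}'$, as required. The two extreme cases of the inductive step are in the same spirit: when $W$ has the maximal possible weight $w=\operatorname{codim}W$ one takes $\map Z'.Z.$ to be the blow up of $W$, and the choice $\Gamma_\sigma=(\mathbf{L}_\Phi\wedge\mathbf{B})_{Y_\sigma}$ built into \eqref{d_reduction} is exactly what allows the coefficient of the new exceptional divisor to be pulled below $1$ in $\Phi'$.

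The substantive case is $1\le w<\operatorname{codim}W$. Localising at a general point $p$ of $W=\bigcap_{i=1}^{w}B_i\cap\bigcap_{j=1}^{\ell}C_j$, with the $B_i$ of coefficient one and the $C_j$ of coefficient $c_j<1$, the valuations $\nu$ centred at $p$ with $\mathbf{L}_\Phi(\nu)>0$ are the weighted blow ups in the toric chart attached to $(Z,B_1+\dots+B_w+C_1+\dots+C_\ell)$ whose weights $(u;v)$ satisfy $\sum_j v_j(1-c_j)<1$; as in the introduction this pins the $C$-part $v$ to a finite set $\mathfrak{F}$ while leaving the $B$-part $u$, along the coefficient-one directions, unconstrained. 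For each admissible $v\in\mathfrak{F}$ I would use the DCC for $I$ to choose a weight vector $u=u(v)$ minimising $\mathbf{B}$ on the corresponding valuation, take $\map Z'.Z.$ to be any simple normal crossings model on which the centres of these finitely many chosen valuations are divisors, and form the associated cut. A toric computation then shows that every exceptional divisor of $\map Z'.Z.$ over $p$ acquires coefficient strictly less than $1$ in $\Phi'$, so that the coefficient-one strata of $(Z',\Phi')$ over $W$ involve at most $w-1$ of the $B_i$ and the weight drops. The main obstacle is precisely this last verification: checking, by explicit combinatorics on the fan refinements produced above and playing the $\mathbf b$-divisors $\mathbf{M}_\Theta$ of \eqref{d_one-b-divisor} and $\mathbf{L}_\Phi$ of \eqref{d_associated-b-divisor} off against the given $\mathbf{B}$, that on $Z'$ one has $\mathbf{L}_{\Phi'}(\nu')\le\mathbf{B}'(\nu')$ at every valuation $\nu'$ whose centre meets a coefficient-one component of $\Phi'$, and that no new top-weight strata are created over $W$. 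Assembling these local steps over all the maximal strata of $(Z,\Phi)$ then yields the required reduction.
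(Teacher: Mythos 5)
Your construction is essentially the one the paper uses (the local toric model, the finite set $\mathfrak{F}$ of weight vectors in the coefficient\mbox{-}$<1$ directions, the DCC-minimising completions in the coefficient-one directions, and a cut on a model extracting those finitely many valuations), but the step you set aside as ``the main obstacle'' is not a verification to be deferred: it \emph{is} the proof of the inductive step, and nothing in your write-up supplies it. Concretely, one must show that after the cut every valuation $\nu$ whose centre on $Z'$ lies in $w$ coefficient-one components of $\Phi'$, with $\mathbf{B}'(\nu)<1$ and $\mathbf{L}_{\Phi'}(\nu)>0$, satisfies $\mathbf{L}_{\Phi'}(\nu)\le\mathbf{B}'(\nu)$. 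The paper does this by expanding $\nu=\sum_{j\neq l}\lambda_je_j+\lambda\sigma$ in the fan of $Y_\sigma$ for the chosen minimiser $\sigma$ with the same first $s$ coordinates, noting that the $\sigma$-exceptional divisor has coefficient at most $\mathbf{B}(\sigma)<1$ in $\Gamma_\sigma=(\mathbf{L}_{\Phi}\wedge\mathbf{B})_{Y_\sigma}$, so the centre of $\nu$ must lie on the strict transforms of the coefficient-one hyperplanes; this forces $l\le s$, hence $\lambda=1$ and $\nu\ge\sigma$, and then $\mathbf{L}_{\Phi'}(\nu)\le\mathbf{L}_{\Gamma_\sigma}(\nu)\le\mathbf{L}_{\Gamma_\sigma}(\sigma)\le\mathbf{B}(\sigma)\le\mathbf{B}(\nu)=\mathbf{B}'(\nu)$. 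Without this monotonicity argument (or a substitute) your inductive step is unproved.

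There is also a structural problem with your induction measure. You take the weight to be the purely combinatorial count of coefficient-one components through a stratum, but a cut does not in general decrease that count: the strict transforms of the coefficient-one components still meet on $Z'$, and an exceptional divisor of $Z'\to Z$ which is not the centre of a chosen $\sigma$ can retain coefficient one in $\Phi'=\Theta\wedge\mathbf{B}_{Z'}$ whenever $\mathbf{B}$ equals one along it (note $1$ may lie in $I$); so your assertion that ``every exceptional divisor over $p$ acquires coefficient strictly less than $1$ in $\Phi'$'' is false in general, and with your definition the weight need not drop. The paper's weight is declared $-1$ at a stratum unless some valuation centred there actually violates $\mathbf{L}_{\Phi}(\nu)\le\mathbf{B}(\nu)$; it is this violation-sensitive weight that decreases (exactly via the chain of inequalities above), and it also guarantees $\mathbf{B}(\sigma)<1$ for the chosen minimisers. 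For the same reason your ``extreme case'' of blowing up a stratum contained only in coefficient-one components does not work as stated in the abstract setting of this lemma: the argument you are implicitly importing, that the exceptional coefficient can be pulled below one because the point sits in the base locus of the relevant linear systems, belongs to the sketch of \eqref{p_limit} in the introduction and uses the divisors $\Delta_i$, not just the $\mathbf{b}$-divisors $\mathbf{B}$ and $\mathbf{L}_{\Phi}$; the actual proof of \eqref{l_reduction} treats all $w\ge 1$ uniformly by the toric argument, with no separate blow-up case. (A minor point: finiteness of the valuations with $\mathbf{L}_{\Phi}(\nu)>0$ in the klt case follows from the coefficients being less than one, not from the DCC; the DCC is used only to pick the minimising completions $\sigma$.)
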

\begin{proof} If $W$ is a stratum of $(Z,\Phi)$, then define the \textbf{weight} $w$ of
$W$ as follows:

\noindent If there is a valuation $\nu$, with centre $W$, such that
$\mathbf{B}(\nu)<\mathbf{L}_{\Phi}(\nu)$, then let $w$ be the number of components of
$\Phi$ with coefficient $1$ which contain $W$.  Otherwise, if there is no such $\nu$, then
let $w=-1$.  

Define the \textbf{weight} of $(Z,\mathbf{B})$ to be the maximum weight of the strata of
$(Z,\Phi)$.

Suppose the weight of $(Z,\mathbf{B})$ is $-1$.  Then $\mathbf{L}_{\Phi}(\nu)\leq
\mathbf{B}(\nu)$ for any valuation $\nu$ whose centre is a stratum.  If $\rho$ is a
valuation, whose centre is not a stratum, we have $0=\mathbf{L}_{\Phi}(\rho)\le
\mathbf{B}(\rho)$ (cf. \cite[2.31]{KM98}).  In this case we just take $Z'=Z$.

From now on we suppose that the weight $w\geq 0$.  Suppose that $(Z',\mathbf{B}')$ is a
cut of $(Z,\mathbf{B})$.  Then $\mathbf{B}'$ and $\mathbf{B}$ have the same coefficients,
except for finitely many valuations.  In particular, the coefficients of $\mathbf{B}'$
belong to a set $I'\supset I$ which still satisfies the DCC.  It suffices therefore to
prove that we can find a cut $(Z',\mathbf{B}')$ of $(Z,\mathbf{B})$ with smaller weight.

Now if $(Z',\mathbf{B}')$ is a cut of $(Z,\mathbf{B})$, then $\mathbf{B}'_{Z'}\leq
\mathbf{L}_{\Phi,Z'}$.  On the other hand, if $\nu$ is any valuation whose centre is not a
divisor on $Z'$, then $\mathbf{B}(\nu)=\mathbf{B}'(\nu)$.  It follows that the weight of
$(Z',\mathbf{B}')$ is at most the weight of $(Z,\mathbf{B})$.  Therefore, as $(Z,\Phi)$
has only finitely many strata, we may construct $(Z',\mathbf{B}')$ \'etale locally about
every stratum.  Thus, we may assume that $Z=\mathbb{C}^n$ and that $\Phi$ is supported on
the coordinate hyperplanes.  

We will use the language of toric geometry, cf. \cite{Fulton93}.  $\mathbb{C}^n$ is the
toric variety associated to the cone spanned by the standard basis vectors $\llist e.n.$
in $\mathbb{R}^n$.  If $\nu$ is any valuation such that $\mathbf{L}_{\Phi}(\nu)>0$, then
$\nu$ is toric and we will identify $\nu$ with an element $(\llist v.n.)$ of
$\mathbb{N}^n$.  Order the components of $\Phi$ so that the last $w$ components have
coefficient one and let $0\leq \llist c.s.<1$ be the initial coefficients, so that
$n=s+w$.  With this ordering, we have
$$
\mathbf{L}_{\Phi}(\nu)=1-\sum_i v_i(1-c_i).
$$
(Indeed both sides of this equation are affine linear in $\llist v.n.$ and $\llist c.s.$
and it is easy to check we have equality when either $\nu$ is the zero vector or when
$\nu=e_i$, $1\leq i\leq n$.)  Consider the finite set
$$
\mathfrak{F}=\{\, (\llist v.s.)\in \mathbb{N}^s \,|\, \sum_i v_i(1-c_i)<1 \,\}.
$$
Given a valuation $\nu=(\llist v.n.)$, note that $\mathbf{L}_{\Phi}(\nu)>0$ if and only if
$(\llist v.s.)\in\mathfrak{F}$.

As $I$ satisfies the DCC, for every $f=(\llist f.s.)\in \mathfrak{F}$, we may pick a
valuation $\sigma=(\llist f.s.,v_{s+1}, v_{s+2}, \dots, v_n)$, such that
$$
\mathbf{B}(\sigma)=\inf \{\, \mathbf{B}(\nu) \,|\, \nu=(\llist f.s.,u_{s+1}, u_{s+2},\dots, u_n) \,\}.
$$
Let $\Sigma$ be a set of choices of such valuations $\sigma$, so that $\Sigma$ and
$\mathfrak{F}$ have the same cardinality.  Let $\map Z'.Z.$ be any log resolution of
$(Z,\Phi)$ such that the centre of every element of $\Sigma$ is a divisor on $Z'$.  We may
assume that the induced birational map $\map Z'.Y_{\sigma}.$ is a morphism, for every
$\sigma\in \Sigma$.  Let $(Z',\mathbf{B}')$ be the cut of $(Z,\mathbf{B})$ associated
to $Z'\to Z$ and $\Sigma$.

There are two cases.  If $w=0$, then $\Sigma=\mathfrak{F}$ is the set of all valuations of
coefficient less than one.  It follows that if $\nu$ is any valuation whose centre on $Z'$
is not a divisor, then $\mathbf{L}_{\Phi'}(\nu)=0$, so that the weight of
$(Z',\mathbf{B}')$ is $-1$, which is less than the weight of $(Z,\mathbf{B})$.

Otherwise we may assume that $w\geq 1$.  Suppose that $\nu$ is a valuation whose centre is
not a divisor on $Z'$ such that $\mathbf{B}'(\nu)<1$ and $\mathbf{L}_{\Phi'}(\nu)>0$.
Then $\mathbf{B}(\nu)=\mathbf{B}'(\nu)$ and $\mathbf{L}_{\Phi}(\nu)>0$ and so $\nu=(\llist
v.n.)$ is toric and $(\llist v.s.)\in \mathfrak{F}$.  By construction, there is an element
$\sigma$ of $\Sigma$ with the same first $s$ coordinates as $\nu$ such that
$\mathbf{B}(\sigma)\leq \mathbf{B}(\nu)<1$.  The cone spanned by the standard basis
vectors $\llist e.n.$ is divided into $m\leq n$ subcones by $\sigma$ (these are the
maximal cones of $Y_{\sigma}$), where $m$ is the number of non-zero entries of $\sigma$,
and so $\nu$ is a non-negative linear combination of $\sigma$ and $n-1$ vectors taken from
$\llist e.n.$.  It follows that
\[
\label{e_simple}\nu=\sum_{j\neq l} \lambda_je_j+\lambda \sigma,\tag{$\sharp$}
\]
for some index $1\leq l\leq n$ and non-negative real numbers $\llist \lambda.n.$ and
$\lambda$, where the $l$th entry of $\sigma$ is non-zero.  

If the centre of $\nu$ on $Z'$ is contained in $w$ components of $\Phi'$ of coefficient
one, then the centre of $\nu$ on $Y_{\sigma}$ is also contained in $w$ components of
coefficient one of $\Gamma_{\sigma}=(\mathbf{L}_{\Phi}\wedge \mathbf{B})_{Y_{\sigma}}$.
By assumption, the exceptional divisor of $\map Y_{\sigma}.Y.$ has coefficient strictly
less than one, and so the centre of $\nu$ on $Y_{\sigma}$ must be contained in the strict
transform of the last $w$ coordinate hyperplanes.  But then, by standard toric geometry,
$l\leq s$.  Hence the $l$th entry of $\nu$ is non-zero.  Comparing the coefficient of
$e_l$ in \eqref{e_simple}, we must have $\lambda=1$, so that $\nu\geq \sigma$.  In this
case,
\begin{align*} 
\mathbf L _{\Phi'}(\nu) 
&\leq\mathbf{L}_{\Gamma_{\sigma}}(\nu)    && \text{by definition of $\mathbf{B}'$,}\\
&\leq\mathbf{L}_{\Gamma_{\sigma}}(\sigma) &&\text{as $\nu\geq\sigma$,}\\
&\leq \mathbf{B}(\sigma)                &&\text{since $\Gamma_{\sigma}\leq\mathbf{B}_{Y_{\sigma}}$,}\\
&\leq\mathbf{B}(\nu)                    &&\text{by our choice of $\sigma$,}\\
&=\mathbf{B'}(\nu)                      &&\text{by definition of $\mathbf{B'}$.}
\end{align*} 
It follows that the weight of $(Z',\mathbf{B}')$ is indeed smaller than the weight of
$(Z,\mathbf{B})$ and this completes the induction and the proof.
\end{proof}

\begin{proof}[Proof of \eqref{p_limit}] Suppose we have a sequence of log pairs
$(X_i,\Delta_i)\in \mathfrak{D}$, such that $v_i\geq v_{i+1}$, where
$v_i:=\vol(X_i,K_{X_i}+\Delta_i)$.  We will show that the sequence $\ilist v.$ is
eventually constant; to this end we are free to pass to a subsequence.  Replacing $I$ by
$\bar{I}\cup \{1\}$, we may assume that $I$ is closed and $1\in I$.

By assumption there are projective birational morphisms $f_i\colon\map X_i.Z.$ such that
$\Phi_i=f_{i*}\Delta_i\leq B$.  Note that if $\nu$ is a valuation such that
$\mathbf{M}_{\Delta_i}(\nu)\notin\{0,1\}$, then the centre of $\nu$ is a component of
$\Delta_i$.  On the other hand, if $\mathbf{M}_{\Delta_i}(\nu)=1$ and
$\mathbf{M}_{\Delta_j}(\nu)=0$ and the centre of $\nu$ is not a component of $\Delta_i$,
then the centre of $\nu$ is not a divisor on $X_i$ and it is a divisor on $X_j$.
Therefore there are only countably many valuations $\nu$ such that
$\mathbf{M}_{\Delta_i}(\nu)\neq \mathbf{M}_{\Delta_j}(\nu)$ for some $i$ and $j$.
Therefore, as $I$ satisfies the DCC, by a standard diagonalisation argument, after passing
to a subsequence, we may assume that $\mathbf{M}_{\Delta_i}(\nu)$ is eventually a
non-decreasing sequence, for all valuations $\nu$.  In particular, we may define a
\textbf{b}-divisor $\mathbf{B}$ by putting
$$
\mathbf{B}(\nu)=\lim_{i\to\infty}\mathbf{M}_{\Delta_i}(\nu).
$$
Note that the coefficients of $\mathbf{B}$ belong to $I$.  Let $\Phi=\mathbf{B}_Z$. 

Suppose that $(Z',\mathbf{B}')$ is a cut of $(Z,\mathbf{B})$ associated to a birational
morphism $\map Z'.Z.$ and a set of valuations $\Sigma$.  Let $f'_i\colon\rmap X_i.Z'.$ be
the induced birational map.  Note that, if $\map X'_i.X_i.$ is a birational morphism and
$(X'_i,\Delta'_i=\mathbf{M}_{\Delta_i,X'_i})$ has simple normal crossings, then
$\vol(X'_i,K_{X'_i}+\Delta'_i)=v_i$ and the coefficients of $\Delta'_i$ belong to $I$, so
that $(X'_i,\Delta'_i)\in \mathfrak{D}$.  Therefore, we are free to replace
$(X_i,\Delta_i)$ by $(X'_i,\Delta'_i)$.  In particular, we may assume that $f'_i$ is a
birational morphism.

Given $\sigma\in \Sigma$, let $\Gamma_{i,\sigma}=(\mathbf{L}_{\Phi_i}\wedge
\mathbf{B})_{Y_{\sigma}}$, where $\map Y_{\sigma}.Z.$ is defined in \eqref{d_toric}.
Suppose we define a sequence of divisors
$$
\Theta_i=\bigwedge_{\sigma\in \Sigma} \mathbf{L}_{\Gamma_{i,\sigma},Z'},
$$
as in \eqref{d_reduction}.  Suppose that $B$ is a prime divisor on $Z'$ which is exceptional over $Z$.  Then the coefficient of $B$ in $\Theta_i$ is the minimum
of finitely many affine linear functions of the coefficients of $\Delta_i$.  It follows
that the coefficients of $\ilist \Theta.$ belong to a set $I'\supset I$ which satisfies
the DCC.  Finally, let
$$
\Delta'_i=\Delta_i\wedge \mathbf{M}_{\Theta_i,X_i},
$$
so that we only change the coefficients of divisors which are exceptional for $Z'\to Z$.
In particular, the coefficients of $\Delta_i'$ belong to $I'$.  On the other hand,
$$
\Delta_i\wedge \mathbf{L}_{\Theta_i,X_i} \leq \Delta'_i=\Delta_i\wedge \mathbf{M}_{\Theta_i,X_i}\leq \Delta_i,
$$
so that 
$$
v_i=\vol(X_i,K_{X_i}+\Delta'_i), 
$$
by (2) of \eqref{l_simple}.  Finally, note that $\mathbf{M}_{\Delta'_i}(\rho)$ is
eventually a non-decreasing sequence for any valuation $\rho$.  In particular, we may
define a \textbf{b}-divisor $\mathbf{B}'$ by putting
$$
\mathbf{B}'(\rho)=\lim_{i\to\infty}\mathbf{M}_{\Delta'_i}(\rho),
$$
as before. 

Hence, \eqref{l_reduction} implies that we may find a reduction $(Z',\mathbf{B}')$, of
$(Z,\mathbf{B})$ and pairs $(X'_i,\Delta'_i)$, whose coefficients belong to a set $I'$
which satisfies the DCC, such that $v_i=\vol(X'_i,K_{X'_i}+\Delta'_i)$, there is a
birational morphism $\map X'_i.Z'.$, and moreover $\mathbf{L}_{\Phi'}\leq \mathbf{B}'$.
Replacing $(X_i,\Delta_i)$ by $(X'_i,\Delta'_i)$, $I$ by $I'$, and $Z$ by $Z'$, we may
therefore assume that $\mathbf{L}_{\Phi}\leq \mathbf{B}$.

Note that
\begin{align*} 
v_i&=\vol(X_i,K_{X_i}+\Delta_i) \\
   &\leq \vol(Z,K_Z+\Phi_i) \\
   &\leq \lim \vol(Z,K_Z+\Phi_i) \\
   &=\vol(Z,K_Z+\Phi),
\end{align*} 
as $\lim \Phi_i=\Phi$.  

On the other hand, if we fix $\epsilon>0$, then $(Z,(1-\epsilon)\Phi)$ is kawamata log
terminal.  In particular, we may pick a birational morphism $f\colon\map Y.Z.$ such that
$(Y,\Psi=\mathbf{L}_{(1-\epsilon)\Phi,Y})$ is terminal.  If we let
$\Theta=\mathbf{L}_{\Phi,Y}$ and $\Gamma=\mathbf{B}_Y$, then
$$
\Psi\leq (1-\eta)\Theta\leq \Theta\leq \Gamma
$$
for some $\eta>0$.  As $\Gamma$ is the limit of $\Gamma_i=\mathbf{M}_{\Delta_i,Y}$, it
follows that we may find $i$ such that $\Psi\leq \Gamma_i$.  As $(Y,\Psi)$ is terminal, we
have $\Psi_i=\mathbf{L}_{\Psi,X_i}\leq \Delta_i$.  But then
\begin{align*} 
\vol(Z,K_Z+(1-\epsilon)\Phi)&=\vol(Y,K_Y+\Psi)\\
                            &\leq \vol(X_i,K_{X_i}+\Psi_i) \\
                            &\leq \vol(X_i,K_{X_i}+\Delta_i)=v_i.
\end{align*} 
Taking the limit as $\epsilon$ goes to zero, we get 
$$
\vol(Z,K_Z+\Phi)\leq v_i\leq \vol(Z,K_Z+\Phi),
$$
so that $v_i=\vol(Z,K_Z+\Phi)$ is constant.  
\end{proof}

\begin{proof}[Proof of \eqref{t_dcc}] We may assume that $1\in I$.  By assumption there is
a log pair $(Z,B)$ and a projective morphism $\map Z.T.$, where $T$ is of finite type,
such that if $(X,\Delta)\in \mathfrak{D}$, then there is a closed point $t\in T$ and a
birational map $f\colon\rmap X.Z_t.$ such that the support of $B_t$ contains the support
of the strict transform of $\Delta_t$ and any $f^{-1}$-exceptional divisor.

Suppose that $p\colon\map Y.X.$ is a birational morphism.  Then the coefficients of
$\Gamma=\mathbf{M}_{\Delta,Y}$ belong to $I$ and
$$
\vol(X,K_X+\Delta)=\vol(Y,K_Y+\Gamma), 
$$
by (1) of \eqref{l_simple}.  Replacing $(X,\Delta)$ by $(Y,\Gamma)$, we may assume that
$f$ is a morphism and we are free to replace $Z$ and $B$ by higher models.

We may assume that $T$ is reduced.  Blowing up and decomposing $T$ into a finite union of
locally closed subsets, we may assume that $(Z,B)$ has simple normal crossings; passing to
an open subset of $T$, we may assume that the fibres of $\map Z.T.$ are log pairs, so that
$(Z,B)$ has simple normal crossings over $T$; passing to a finite cover of $T$, we may
assume that every stratum of $(Z,B)$ has irreducible fibres over $T$; decomposing $T$ into
a finite union of locally closed subsets, we may assume that $T$ is smooth; finally
passing to a connected component of $T$, we may assume that $T$ is integral.

Let $Z_0$ and $B_0$ be the fibres over a fixed closed point $0\in T$.  Let
$\mathfrak{D}_0\subset \mathfrak{D}$ be the set of simple normal crossings pairs
$(Y,\Gamma)$, where the coefficients of $\Gamma$ belong to $I$, $Y$ is a projective
variety of dimension $n$, and there is a birational morphism $g\colon\map Y.Z_0.$ with
$g_*\Gamma\leq B_0$.

Pick $(X,\Delta)\in \mathfrak{D}$.  Let $\Phi=f_*\Delta$.  Let $\Sigma$ be the set of all
valuations $\nu$ whose centre on $X$ is a divisor which is exceptional over $Z_t$ such that
$\mathbf{L}_{\Phi}(\nu)>0$.  We may find a birational morphism $f'\colon\map X'.Z_t.$,
such that the centre of every element of $\Sigma$ is a divisor on $X'$, whilst $f'$ only
blows up strata of $(Z_t,\Phi)$.  Suppose that $g\colon\map W.X.$ is a log resolution
which resolves the indeterminacy locus of the induced birational map $\rmap X.X'.$.  If
we set $\Delta'=\mathbf{M}_{\Delta,X'}$, then the coefficients of $\Delta'$ belong to $I$
and
$$
\vol(X,K_X+\Delta)=\vol(W,K_W+\mathbf{M}_{\Delta,W})\leq \vol(X',K_{X'}+\Delta'),
$$
by (1) of \eqref{l_simple}.  If $\nu$ is any valuation whose centre is an exceptional divisor for
$\map W.X'.$ but not for $\map W.X.$, then the centre of $\nu$ is an exceptional divisor for $\map
X.Z_t.$ and so $\mathbf{L}_{\Phi}(\nu)=0$, by choice of $f'$.
It follows that
$$
\mathbf{M}_{\Delta,W}\geq \mathbf{M}_{\Delta',W}\wedge \mathbf{L}_{\Phi,W},
$$
and so \eqref{l_simple} implies that
$$
\vol(W,K_W+\mathbf{M}_{\Delta,W})\geq \vol(W,K_W+\mathbf{M}_{\Delta',W}\wedge \mathbf{L}_{\Phi,W})=\vol(X',K_{X'}+\Delta').
$$
and hence the inequalities above are equalities. In particular
$$
\vol(X,K_X+\Delta)=\vol(X',K_{X'}+\Delta').
$$
Replacing $(X,\Delta)$ by $(X',\Delta')$, we may assume that $f$ only blow ups strata of
$\Phi$.

As $(Z,B)$ has simple normal crossings over $T$ and the strata of $(Z,B)$ have irreducible
fibres, we may find a sequence of blow ups $g\colon\map Z'.Z.$ of strata of $B$, which
induces the sequence of blow ups determined by $f$, so that $X=Z'_t$.  There is a unique
divisor $\Psi$ supported on the strict transform of $B$ and the exceptional locus of $g$,
such that $\Delta=\Psi_t$.  If $Y=Z'_0$ is the fibre over $0$ of $\map Z'.T.$ and $\Gamma$
is the restriction of $\Psi$ to $Y$, then $(Y,\Gamma)\in \mathfrak{D}_0$.  \eqref{t_dilp}
implies that $\vol(Y,K_Y+\Gamma)=\vol(X,K_X+\Delta)$.

It follows that 
$$
\{\, \vol(X,K_X+\Delta) \,|\, (X,\Delta)\in \mathfrak{D} \,\}=\{\, \vol(X,K_X+\Delta) \,|\, (X,\Delta)\in \mathfrak{D}_0 \,\}.
$$
Now apply \eqref{p_limit}.  \end{proof}

\section{Birational geometry of global quotients}
\label{s_quotient}

\begin{theorem}[Tsuji]\label{t_tsuji} Assume \eqref{t_volume}$_{n-1}$.  

Then there is a constant $C=C(n)>2$ such that if $(X,\Delta)$ is a global quotient, where
$X$ is projective of dimension $n$, and $K_X+\Delta$ is big, then $\phi_{m(K_X+\Delta)}$
is birational for every integer $m\geq C+1$ such that
$$
\vol(X,(m-1)(K_X+\Delta))> (Cn)^n.
$$
\end{theorem}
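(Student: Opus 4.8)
The plan is to reduce to the case where $K_X+\Delta$ is ample, apply the recursive construction of \eqref{t_recursive}, and read off birationality from \eqref{l_potential}; the induction hypothesis \eqref{t_volume}$_{n-1}$ enters only in the choice of the constant $\epsilon$ in \eqref{t_recursive}. First I would replace $(X,\Delta)$ by its log canonical model, which exists by \cite{BCHM06}: this changes neither $\vol(X,m(K_X+\Delta))$ nor whether $\phi_{m(K_X+\Delta)}$ is birational, it keeps the pair kawamata log terminal with boundary coefficients in $\{\,\frac{r-1}r\mid r\in\mathbb N\,\}$, and a subvariety through a very general point of the model is the strict transform of a subvariety through a very general point of $X=Y/G$, hence still birational to a global quotient. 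So we may assume $H:=K_X+\Delta$ is ample; the case $n=1$ is elementary, so we assume $n\ge 2$.

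Next I fix constants. Let $\delta_{n-1}\in(0,1]$ be the constant of part (2) of \eqref{t_volume}$_{n-1}$, and set $\epsilon:=\tfrac12\min(1,\delta_{n-1}/2^{n-1})$, $\gamma:=2n/\epsilon$ and $C:=2(1+\gamma)^{n-1}$; thus $C=C(n)>2$. Suppose $m\ge C+1$ is an integer with $\vol(X,(m-1)H)>(Cn)^n$, and put $\gamma_0:=(m-1)/C\ge 1$. Then
$$
\vol(X,\gamma_0H)=C^{-n}\vol(X,(m-1)H)>C^{-n}(Cn)^n=n^n\qquad\text{and}\qquad 2\gamma_0(1+\gamma)^{n-1}=m-1 .
$$
The point of this choice is that the volume hypothesis of the theorem is exactly what makes $\gamma_0$ an admissible input for \eqref{t_recursive}, whose conclusion will then be that $(m-1)(K_X+\Delta)$ is potentially birational.

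To apply \eqref{t_recursive} to the kawamata log terminal pair $(X,\Delta)$ with $H$ and $\gamma_0$, I must check: for very general $x\in X$ and every $0\le\Delta_0\sim_{\mathbb Q}\lambda H$ with $\lambda\ge1$ such that $(X,\Delta+\Delta_0)$ is log canonical at $x$, if $V$ is the minimal non kawamata log terminal centre through $x$ and $1\le k:=\dim V\le n-1$, then $\vol(V,\lambda H|_V)>\epsilon^k$. The minimal centre $V$ is normal, and subadjunction gives $(K_X+\Delta+\Delta_0)|_V\sim_{\mathbb Q}K_V+\Delta_V+J_V$ with $\Delta_V\ge0$ and $J_V$ nef. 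Since $V$ passes through a very general point of the global quotient $X$, pulling $V$ back to a component $W$ of its preimage in $Y$ and taking an equivariant resolution of $W$ modulo its stabiliser in $G$ should exhibit $(V,\Delta_V)$ as birational to a global quotient of dimension $k\le n-1$ with $K_V+\Delta_V$ big, whence part (2) of \eqref{t_volume}$_{n-1}$ gives $\vol(V,K_V+\Delta_V)\ge\delta_{n-1}$. Using that the volume is non-decreasing under adding the nef divisor $J_V$, that $\lambda/(1+\lambda)\ge\tfrac12$, that $k\le n-1$ and that $\epsilon<1$, we get
$$
\vol(V,\lambda H|_V)=\frac{\lambda^k}{(1+\lambda)^k}\,\vol(V,(K_X+\Delta+\Delta_0)|_V)\ge\frac{\lambda^k}{(1+\lambda)^k}\,\vol(V,K_V+\Delta_V)\ge\frac{\delta_{n-1}}{2^{n-1}}>\epsilon\ge\epsilon^k .
$$
So \eqref{t_recursive} applies, and since $2\gamma_0(1+\gamma)^{n-1}=m-1$, the divisor $(m-1)(K_X+\Delta)$ is potentially birational.

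It then remains to convert this into birationality of $\phi_{m(K_X+\Delta)}$. By (1) of \eqref{l_potential}, $\phi_{K_X+\rfup (m-1)(K_X+\Delta).}$ is birational. Since $m-1$ is an integer, $\rfup (m-1)(K_X+\Delta).=(m-1)K_X+\rfup (m-1)\Delta.$, and \eqref{l_round}, in which equality holds because the coefficients of $\Delta$ have the form $\frac{r-1}r$, gives $\rfup (m-1)\Delta.=\rfdown m\Delta.$; hence $K_X+\rfup (m-1)(K_X+\Delta).=mK_X+\rfdown m\Delta.=\rfdown m(K_X+\Delta).$, so $\phi_{m(K_X+\Delta)}$ is birational, as required. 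I expect the main obstacle to be the claim made in the previous paragraph — that a minimal non kawamata log terminal centre $V$ through a very general point, together with its subadjunction boundary, is birational to a global quotient of dimension $<n$ and has $K_V+\Delta_V$ big; this is the only place the induction is genuinely used, and making it precise requires controlling the moduli part $J_V$ against the global quotient structure. Everything else — the constant bookkeeping designed so that $\vol(X,(m-1)(K_X+\Delta))>(Cn)^n$ forces $2\gamma_0(1+\gamma)^{n-1}\le m-1$, the appeal to \eqref{l_potential}, and the rounding identity \eqref{l_round} special to denominators $r$ — is routine.
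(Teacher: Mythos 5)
Your overall architecture matches the paper's: pass to log canonical models via \cite{BCHM06}, feed the ample divisor $H=K_X+\Delta$ and $\gamma_0=(m-1)/C$ into \eqref{t_recursive}, and convert potential birationality of $(m-1)(K_X+\Delta)$ into birationality of $\phi_{m(K_X+\Delta)}$ via (1) of \eqref{l_potential} together with the rounding identity of \eqref{l_round}; your constant bookkeeping is also fine. But the one step you flag as "the main obstacle" is a genuine gap, and the route you sketch for it is not the right one. You apply Kawamata subadjunction on $X$ to write $(K_X+\Delta+\Delta_0)|_V\sim_{\mathbb Q}K_V+\Delta_V+J_V$ and then invoke (2) of \eqref{t_volume}$_{n-1}$ to get $\vol(V,K_V+\Delta_V)\geq\delta_{n-1}$. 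That theorem applies to global quotient pairs, and $(V,\Delta_V)$ is not one: the boundary $\Delta_V$ is produced by subadjunction from the auxiliary divisor $\Delta_0\sim_{\mathbb Q}\lambda H$ and has no relation to any quotient structure on $V$; moreover $K_V+\Delta_V$ is not known to be big (only $K_V+\Delta_V+J_V$ is ample, and the nef part $J_V$ may carry all the positivity), and "birational to a global quotient" would not help anyway, since the volume of a pair is only a birational invariant when the boundaries are matched under the birational map. So the inequality $\vol(V,K_V+\Delta_V)\geq\delta_{n-1}$, which is the only place the induction enters, is unsupported.

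The paper's proof handles exactly this point by never descending the subadjunction to $X$. One pulls $\Delta_0$ back to $Y$, setting $\Gamma_0=\pi^*\Delta_0$, so that every component of $\pi^{-1}(V)$ is a non kawamata log terminal centre of $(Y,\Gamma_0)$ and $H'=K_Y+\Gamma_0=\pi^*\bigl((1+\lambda)(K_X+\Delta)\bigr)$ is ample. Kawamata subadjunction applied upstairs, on the normalisation $V'$ of $\pi^{-1}(V)$, gives $(K_Y+\Gamma_0+\eta H')|_{V'}=K_{V'}+\Phi$ with $\Phi\geq 0$ for every $\eta>0$; here the subadjunction boundary is simply discarded, yielding $\vol\bigl(V',(K_Y+\Gamma_0)|_{V'}\bigr)\geq\vol(V',K_{V'})\geq\vol(W,K_W)$ for a $G$-equivariant resolution $W\to V'$. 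Since $V$ passes through a very general point of $X$, $W$ is of general type, and the induction hypothesis is applied to the genuine global quotient $(U=W/G,\Theta)$ with $K_W=\psi^*(K_U+\Theta)$, giving $\vol(W,K_W)=|G|\,\vol(U,K_U+\Theta)\geq|G|\,\epsilon^k$. Dividing by $|G|=\deg(\pi^{-1}(V)\to V)$ yields $\vol\bigl(V,(1+\lambda)(K_X+\Delta)|_V\bigr)>\epsilon^k$, hence $\vol(V,\lambda(K_X+\Delta)|_V)>(\epsilon/2)^k$, which is what \eqref{t_recursive} needs. In short: the lower bound is obtained by comparing with $K_{V'}$ on the cover, where no boundary needs to be matched, and the induction is applied to $(U,\Theta)$, not to $(V,\Delta_V)$; supplying this maneuver is precisely what your proposal is missing.
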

\begin{proof} First note that \eqref{l_round} implies that 
$$
K_X+\rfup (m-1)(K_X+\Delta).=\rfdown m(K_X+\Delta)..
$$
As we are assuming \eqref{t_volume}$_{n-1}$ there is a constant $\epsilon>0$ such that if
$(U,\Theta)$ is a global quotient, where $K_U+\Theta$ is big and $U$ is projective of
dimension $k$ at most $n-1$, then $\vol(U,K_U+\Theta)>\epsilon^k$.  Let
$$
C=2(1+\gamma)^{n-1}\qquad \text{where} \qquad \gamma=\frac {4n}{\epsilon}.
$$

By assumption there is a smooth projective variety $Y$ of dimension $n$ and a finite group
$G\subset \Aut(Y)$ such that $X=Y/G$ and if $\pi\colon\map Y.X.$ is the quotient morphism,
then $K_Y=\pi^*(K_X+\Delta)$.  As $K_X+\Delta$ is big, $Y$ is of general type.  Replacing
$(X,\Delta)$ and $Y$ by their log canonical models, which exist by \cite{BCHM06}, we lose
the fact that $X$ and $Y$ are smooth, gain the fact that $K_X+\Delta$ and $K_Y$ are ample,
and retain the condition that $K_X+\Delta$ is kawamata log terminal and $K_Y$ is
canonical.

We check the hypotheses of \eqref{t_recursive}, applied to the ample divisor $K_X+\Delta$
and the constants $\epsilon/2$ and $\gamma_0=\frac{m-1}C\geq 1$.  Clearly
$$
\vol(X,\gamma_0(K_X+\Delta))>n^n.
$$
Suppose that $V$ is a minimal non kawamata log terminal centre of a log pair
$(X,\Delta+\Delta_0)$, which is log canonical at the generic point of $V$.  Further
suppose that $V$ passes through a very general point of $X$, and $0\leq \Delta_0
\sim_{\mathbb{Q}}\lambda (K_X+\Delta)$, for some rational number $\lambda\geq 1$.

If $\Gamma_0=\pi^*\Delta_0$, then every irreducible component of $\pi^{-1}(V)$ is a non
kawamata log terminal centre of $(Y,\Gamma_0)$.  Let $V'$ be the normalisation of
$\pi^{-1}(V)$.  As $H=K_Y+\Gamma_0$ is ample, Kawamata's subadjunction formula implies
that for every $\eta>0$, there is a divisor $\Phi\geq 0$ on $V'$ such that
$$
(K_Y+\Gamma_0+\eta H)|_{V'}=K_{V'}+\Phi.
$$
Let $\map W.V'.$ be a $G$-equivariant resolution.  As $V$ passes through a very general
point of $X$, $W$ is a union of irreducible varieties of general type.  If $U=W/G$ is the
quotient, then $U$ is irreducible and we may find a $\mathbb{Q}$-divisor $\Theta$ such
that $K_W=\psi^*(K_U+\Theta)$, where $\psi\colon\map W.U.$ is the quotient map.

As $(U,\Theta)$ is a global quotient, $\vol(U,K_U+\Theta)>\epsilon^k$, where $k$ is the
dimension of $V$.  Therefore
\begin{align*} 
|G| \vol(V,(K_X+\Delta+\Delta_0)|_V) &=\vol(V',(K_Y+\Gamma_0)|_{V'})\\ 
                                 &\geq \vol(V',K_{V'})\\ 
                                 &\geq \vol(W,K_W)\\ 
                                 &=|G|\vol(U,K_U+\Theta)\\ 
                                 &\geq |G|\epsilon^k.
\end{align*} 
Thus 
$$
\vol(V,(1+\lambda)(K_X+\Delta)|_V)>\epsilon^k,
$$
and so
$$
\qquad \vol(V,\lambda(K_X+\Delta)|_V)>\left (\frac{\epsilon}{2}\right )^k.  
$$
\eqref{t_recursive} implies that $(m-1)(K_X+\Delta)$ is potentially birational.  (1) of
\eqref{l_potential} implies that $\phi_{K_X+\rfup (m-1)(K_X+\Delta).}$ is
birational.  \end{proof}

\section{Proof of \eqref{t_volume}  and \eqref{t_boundauto} }
\label{s_proofs}

\begin{proof}[Proof of \eqref{t_volume}] By induction on $n$.  Assume
\eqref{t_volume}$_{n-1}$.  By \eqref{t_tsuji} there is a constant $C=C(n)>2$ depending
only on the dimension $n$ such that if $(X,\Delta)$ is a global quotient, where $X$ is
projective of dimension $n$ and $K_X+\Delta$ is big, then $\phi_{m(K_X+\Delta)}$ is
birational, for any $m\geq C+1$ such that
$$
\vol(X,(m-1)(K_X+\Delta))> (Cn)^n.  
$$
Note that the right hand side does not depend on $m$.   

Fix a constant $V>n^n$ and let
$$
\mathfrak{D}_V=\{\, (X,\Delta)\in \mathfrak{D} \,|\, 0<\vol(X,K_X+\Delta)\leq V \,\}.
$$
Note that if $k$ is a positive integer such that $\vol(X,k(K_X+\Delta))\leq C^nV$, then
$\vol(X,(k+1)(K_X+\Delta))\leq 2^nC^nV$.  It follows that there is a positive integer
$m\geq C+1$ such that if $(X,\Delta)\in \mathfrak{D}_V$, then
$$
(Cn)^n < \vol(X,(m-1)(K_X+\Delta))\leq 2^nC^nV, 
$$
so that $\phi_{m(K_X+\Delta)}$ is birational.  (2) of \eqref{l_potential} implies that
$\phi_{K_X+(2n+1)m(K_X+\Delta)}$ is birational.  But then \eqref{t_boundingimage} implies
that $\mathfrak{D}_V$ is log birationally bounded, and so \eqref{t_dcc} implies that the
set
$$
\{\, \vol(X,K_X+\Delta) \,|\, (X,\Delta)\in \mathfrak{D}_V \,\},
$$
satisfies the DCC, which implies that (1) and (2) of \eqref{t_volume} hold 
in dimension $n$.  

In particular there is a constant $\delta>0$ such that if $(X,\Delta)\in \mathfrak{D}$,
and $K_X+\Delta$ is big, then $\vol(X,K_X+\Delta)\geq \delta$.  It follows that
$\phi_{M(K_X+\Delta)}$ is birational, for any
$$
M> \frac {Cn}{\delta}+1,
$$
and this completes the induction and the proof.  \end{proof}

\begin{proof}[Proof of \eqref{t_boundauto}] By (2) of \eqref{t_volume} there is a constant
$\delta>0$ such that if $(X,\Delta)$ is a global quotient, where $X$ is projective of
dimension $n$ and $K_X+\Delta$ is big, then $\vol(X,K_X+\Delta)\geq \delta$.  Let $c=\frac
1{\delta}$.

Let $Y$ be a projective variety of dimension $n$ of general type.  By \cite{BCHM06}, there
is a log canonical model $\rmap Y.Y'.$.  If $G$ is the birational automorphism group of
$Y$, then $G$ is the automorphism group of $Y'$.  Replacing $Y$ by a $G$-equivariant
resolution of $Y'$, we may assume that $G$ is the automorphism group of $Y$.  Let
$\pi\colon\map Y.X=Y/G.$ be the quotient of $Y$.  Then there is a divisor $\Delta$ on $X$
such that $K_Y=\pi^*(K_X+\Delta)$.  By definition, $(X,\Delta)$ is a global quotient, $X$
is projective and $K_X+\Delta$ is big.  It follows that $\vol(X,K_X+\Delta)\geq \delta$.  
As 
$$
\vol(Y,K_Y)=|G|\vol(X,K_X+\Delta),
$$
it follows that 
\[
|G|\leq c\cdot \vol(Y,K_Y).\qedhere
\]
\end{proof}

\bibliographystyle{hamsplain}
\bibliography{/home/mckernan/Jewel/Tex/math}


\end{document}